\documentclass[12pt]{amsart}
\pagestyle{plain} 
\usepackage[utf8]{inputenc}
\usepackage{setspace}
\usepackage[english]{babel}
\usepackage{mathabx}
\usepackage{framed}
\usepackage{soul}
\usepackage[all]{xy}
\usepackage[margin=1in]{geometry}
\usepackage{arydshln}
\usepackage[colorinlistoftodos]{todonotes}
\usepackage{pb-diagram}
\usepackage[mathscr]{euscript}
\usepackage{multicol}
\usepackage{stmaryrd} 
\usepackage{empheq} 
\usepackage{tikz-cd}
\tikzset{
  symbol/.style={
    draw=none,
    every to/.append style={
      edge node={node [sloped, allow upside down, auto=false]{$#1$}}}
  }
}

\usepackage{amsmath,amsfonts,amssymb,amsthm,mathrsfs,latexsym,mathtools}
\usepackage{commath}
\linespread{1}
\usepackage[T1]{fontenc}
\usepackage{hyperref}
\usepackage{caption} 
\usepackage{subcaption} 
\usepackage{blkarray}
\usepackage{lipsum}

\DeclareMathAlphabet{\mathbbmsl}{U}{bbm}{m}{sl}

\usetikzlibrary{matrix}

\newcommand{\C}{\mathbb{C}} 
\newcommand{\Z}{\mathbb{Z}}

\newcommand{\bbP}{\mathbb{P}}

\newcommand{\Sym}{\textup{Sym}}
\newcommand{\Hilb}{\textup{Hilb}}
\newcommand{\Bl}{\textup{Bl}}
\newcommand{\depth}{\textup{depth}}

\usepackage{amscd} 

\usepackage{enumitem} 

\newtheorem{theorem}{Theorem}[section]
\newtheorem{definition}[theorem]{Definition}
\newtheorem{proposition}[theorem]{Proposition}
\newtheorem{corollary}[theorem]{Corollary}
\newtheorem{question}[theorem]{Question}
\newtheorem{example}[theorem]{Example}
\newtheorem{lemma}[theorem]{Lemma}

\newtheorem{problem}[theorem]{Problem}
\newtheorem{remark}[theorem]{Remark}

\newtheorem{notation}[theorem]{Notation}

\newtheorem{thm}{Theorem}

\tikzset{commutative diagrams/.cd,
mysymbol/.style={start anchor=center,end anchor=center,draw=none}
}

\title{Hilbert Scheme of a Pair of Skew Lines on Cubic Hypersurfaces}
\author{Yilong Zhang}
\address{Department of Mathematics, Purdue University,
150 N. University Street, West Lafayette, IN 47907, U.S.A.}
\email{zhan4740@purdue.edu}

\date{Apr 18, 2025}
\subjclass[2010]{14C05, 14E05 primary, 14J40, 14J70 secondary}
\begin{document}

\maketitle
\begin{abstract}
    We study an irreducible component $H(X)$ of the Hilbert scheme $\Hilb^{2t+2}(X)$ of a smooth cubic hypersurface $X$ containing two disjoint lines. For cubic threefolds, $H(X)$ is always smooth, as shown in \cite{YZ_SkewLines}. We provide a second proof and generalize this result to higher dimensions. Specifically, for cubic hypersurfaces of dimension at least four, we show $H(X)$ is normal, and it is smooth if and only if $X$ lacks certain "higher triple lines." We characterize $H(X)$ using the Hilbert-Chow morphism and describe its singularities when $X$ is special. 
\end{abstract}


\section{Introduction}

Lines in the projective space $\mathbb P^n$ are parameterized by the Grassmannian $Gr(2,n+1)$ of two-dimensional linear subspaces in $\C^{n+1}$. Using Grothendieck's language, the Grassmannian $Gr(2,n+1)$ is the Hilbert scheme $\Hilb^{t+1}(\mathbb P^n)$ parameterizing universal family of subschemes of $\mathbb P^n$ with Hilbert polynomial $t+1$, namely projective lines.

One can consider a pair of lines. If $n$ is at least $3$, then a general pair of two lines in $\mathbb P^n$ is skew, and they define an irreducible component $H(\mathbb P^n)$ of the Hilbert scheme $\Hilb^{2t+2}(\mathbb P^n)$. The component $H(\mathbb P^n)$ is birational to the symmetric product $\Sym^2Gr(2,n+1)$ and parameterizes pairs of disjoint lines and their flat limits.

Using deformation theory, Chen, Coskun, and Nollet showed that

\begin{theorem}\cite{CCN}
    The component $H(\mathbb P^n)$ is smooth.
\end{theorem}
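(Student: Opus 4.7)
The plan is to establish smoothness by computing the tangent space $T_{[Z]}\Hilb^{2t+2}(\mathbb{P}^n) = \Hom_{\mathcal{O}_{\mathbb{P}^n}}(I_Z, \mathcal{O}_Z)$ at every closed point $[Z] \in H(\mathbb{P}^n)$ and verifying it has the expected dimension $4(n-1) = \dim H(\mathbb{P}^n)$, a number already visible from the birational model $\Sym^2 Gr(2,n+1)$.

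The generic case is handled by the normal bundle: for $Z = L_1 \sqcup L_2$ a disjoint union, $N_{Z/\mathbb{P}^n} \cong \mathcal{O}_{\mathbb{P}^1}(1)^{n-1} \oplus \mathcal{O}_{\mathbb{P}^1}(1)^{n-1}$, giving $h^0 = 4(n-1)$ and $h^1 = 0$, so $[Z]$ is a smooth unobstructed point. The substance of the argument is the boundary. I would first classify the flat limits of two disjoint lines with Hilbert polynomial $2t+2$: every such $Z$ is either (i) a union $L_1 \cup L_2$ of two distinct incident lines with an embedded point at the intersection $p = L_1 \cap L_2$ (needed to raise $\chi$ from $1$ to $2$), or (ii) a \emph{ribbon}, i.e.\ a non-reduced double structure on a single line $L$, which corresponds to a surjection $N^\vee_{L/\mathbb{P}^n} \twoheadrightarrow \mathcal{O}_L$ or, equivalently, a nonzero tangent vector $v \in T_{[L]} Gr(2,n+1)$ modulo scaling.

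For each boundary type I would compute $\Hom(I_Z, \mathcal{O}_Z)$ directly. In case (i), tangent vectors split into contributions from deforming $L_1$, deforming $L_2$, and moving the embedded point off the node, and these add to $4(n-1)$. Case (ii) will be the main obstacle: since $I_Z$ lacks a short Koszul resolution, one must work with the canonical extension $0 \to I_L^2 \to I_Z \to \mathcal{O}_L \to 0$ coming from the ribbon data and track how first-order deformations of the support line $L$ interact with deformations of the nilpotent direction $v$.

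A conceptually cleaner route, which I would pursue in parallel, is to construct an isomorphism $\Phi: \Hilb^2(Gr(2,n+1)) \xrightarrow{\sim} H(\mathbb{P}^n)$ via the universal line $\mathcal{U} \subset Gr(2,n+1) \times \mathbb{P}^n$: a length-$2$ subscheme $\xi \subset Gr$ yields a flat family of line unions in $\mathbb{P}^n$ producing precisely the subschemes catalogued above — two disjoint or incident lines when $\xi$ is reduced, and a ribbon determined by the tangent direction when $\xi$ is non-reduced. Since $\Hilb^2(Y)$ is smooth for any smooth $Y$ (it is $\Bl_\Delta(Y \times Y)/S_2$), this would yield smoothness immediately. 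The delicate point is that one must prove $\Phi$ is an isomorphism rather than merely a bijective birational morphism; this in turn requires either an independent normality argument for $H(\mathbb{P}^n)$ or an explicit construction of the inverse from the Fano scheme of lines associated to $[Z]$.
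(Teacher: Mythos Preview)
Your first approach is in the spirit of the cited source: the paper does not prove this theorem itself but attributes it to Chen--Coskun--Nollet, who indeed proceed by deformation theory, computing $\Hom(I_Z,\mathcal{O}_Z)$ stratum by stratum. However, your classification of the boundary is incomplete. Besides the pure ribbon (type~(II) in the paper's terminology), there is a fourth type of flat limit: a \emph{planar} double line $Z_{L,P^2}$ (the first infinitesimal neighborhood of $L$ in a $2$-plane, with Hilbert polynomial $2t+1$) together with an embedded point supported on $L$. These type~(IV) schemes arise exactly when your ``normal direction'' $v\in H^0(N_{L/\mathbb{P}^n})$ degenerates, i.e.\ when the section vanishes at a point of $L$ and so factors through a subbundle $\mathcal{O}_L(1)\hookrightarrow N_{L/\mathbb{P}^n}$ rather than $\mathcal{O}_L\hookrightarrow N_{L/\mathbb{P}^n}$. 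Your surjection $N^\vee_{L/\mathbb{P}^n}\twoheadrightarrow\mathcal{O}_L$ exists only on the open locus where $v$ is nowhere zero; the tangent-space computation at type~(IV) points is the most delicate case and cannot be skipped.

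Your second approach is not merely delicate but false for $n\ge 4$. There is no isomorphism $\Hilb^2(Gr(2,n+1))\cong H(\mathbb{P}^n)$, and the map you propose cannot even be defined: the tautological family of line-pairs over $\Hilb^2(Gr)$ is \emph{not flat}, since the reduced union $L_1\cup L_2$ drops to Hilbert polynomial $2t+1$ when the lines become incident. What is true (and is the backbone of the present paper) is a morphism in the opposite direction: the Hilbert--Chow map $H(\mathbb{P}^n)\to\Sym^2Gr(2,n+1)$ factors through $\Hilb^2(Gr)$, and the resulting map $H(\mathbb{P}^n)\to\Hilb^2(Gr)$ is the blow-up along the strict transform of the incidence locus $\tilde{D}$. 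For $n\ge 4$ this blow-up is nontrivial, with fiber $\mathbb{P}^{n-3}$ over a pair of incident lines, parametrizing the choice of $\mathbb{P}^3$ that carries the embedded point. Only for $n=3$ is $\tilde{D}$ a divisor and the blow-up an isomorphism. So smoothness of $H(\mathbb{P}^n)$ ultimately rests on the smoothness of $\tilde{D}$, which the paper establishes in Proposition~\ref{prop_Dtilde}.
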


In this paper, we study a similar question for cubic hypersurfaces in $\mathbb P^n$.

\subsection{Cubic hypersurfaces}
The study of lines on cubic hypersurfaces dates back to the 19th century, when Cayley and Salmon discovered the 27 lines on a cubic surface \cite{LuigiCubic}. There are 16 lines that skew to a given line, so there are $27\times 16=432$ disjoint pairs of skew lines on a cubic surface. 

Starting from dimension 3, lines on a cubic hypersurface vary in a continuous family. They form a subvariety $F$ of Grassmannian and are called the \textit{Fano variety of lines}. For example, when $X$ is a cubic threefold, then $F$ is a surface of general type. Griffiths and Clemens \cite{CG} studied the Abel-Jacobi map
\begin{equation}\label{eqn_IntroAJ}
    F\times F\to J(X),
\end{equation}
by integrating a differential 3-form $\omega$ against a topological 3-chain bounding a pair of lines $L_1$ and $L_2$. Its geometry is used to prove the irrationality of cubic threefolds.

For a cubic fourfold, $F$ is hyperk\"{a}hler fourfold \cite{BeaDon85} and deformation equivalent to the Hilbert scheme of two points on a K3 surface. Voisin defined a rational map from a pair of two skew lines on cubic fourfold to a hyperkahler 8-fold considered by Voisin \cite{VoisinMap}. Its restriction to a general hyperplane section is precisely \eqref{eqn_IntroAJ}.  This motivates us to study the parameter space of a pair of skew lines on a cubic hypersurface. 

Let $X\subseteq \bbP^{n}$ be a smooth cubic hypersurface, with $n\ge 4$, then a general pair of two lines on $X$ is disjoint and determines an irreducible component $H(X)$ in the Hilbert scheme $\Hilb^{2t+2}(X)$ of $X$. Then, there is a closed embedding
$$i: H(X)\hookrightarrow H(\mathbb P^n).$$

\begin{definition}\normalfont
    We will call the component $H(X)$ the \textit{Hilbert scheme of a pair of skew lines} on $X$.
\end{definition}

In a previous work, we proved that

\begin{theorem}  \cite{YZ_SkewLines} \label{thm_H(X)3fold}
When $X$ is a smooth cubic threefold, $H(X)$ is smooth.  
\end{theorem}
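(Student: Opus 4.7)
The plan is to realize $H(X)$ as the vanishing locus of a section of a vector bundle on the smooth ambient variety $H(\mathbb{P}^4)$, and then verify the transversality of this section at every point. Let $\pi\colon \mathcal{Z}\to H(\mathbb{P}^4)$ be the universal family of the component $H(\mathbb{P}^4)\subset \Hilb^{2t+2}(\mathbb{P}^4)$, which is smooth of dimension $12$ by the Chen--Coskun--Nollet theorem. A short Castelnuovo--Mumford regularity argument shows that $H^1(\mathcal{O}_Z(3))=0$ for every $[Z]\in H(\mathbb{P}^4)$, so the direct image $\mathcal{E}:=\pi_*\mathcal{O}_{\mathcal{Z}}(3)$ is a vector bundle of rank $\chi(\mathcal{O}_Z(3))=8$. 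The cubic $f$ cutting out $X$ then furnishes a section $s_X\in H^0(\mathcal{E})$ with $s_X([Z])=f|_Z$, whose zero locus contains $H(X)$ as an irreducible component of the expected dimension $12-8=4$.

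Smoothness of $H(X)$ at $[Z]$ is equivalent to the surjectivity of the differential $(ds_X)_{[Z]}\colon T_{[Z]}H(\mathbb{P}^4)\to \mathcal{E}_{[Z]}$. Under the canonical identifications $T_{[Z]}H(\mathbb{P}^4)=H^0(N_{Z/\mathbb{P}^4})$ and $\mathcal{E}_{[Z]}=H^0(\mathcal{O}_Z(3))$, this differential is exactly the map on global sections induced by the normal bundle sequence
\begin{equation*}
0\to N_{Z/X}\to N_{Z/\mathbb{P}^4}\to \mathcal{O}_Z(3)\to 0
\end{equation*}
(or its $\Ext$-theoretic analog when $Z$ fails to be a local complete intersection). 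Surjectivity therefore reduces to the vanishing of $H^1(N_{Z/X})$, respectively of $\Ext^1_{\mathcal{O}_X}(I_Z,\mathcal{O}_Z)$. What remains is a case analysis on a natural stratification of $H(X)$ by the scheme structure of $[Z]$: (i)~pairs of disjoint lines $L_1\sqcup L_2$, for which $N_{Z/X}$ splits and both possible splitting types of $N_{L_i/X}$ on a cubic threefold, namely $\mathcal{O}\oplus\mathcal{O}$ and $\mathcal{O}(1)\oplus\mathcal{O}(-1)$, satisfy $h^1=0$; (ii)~nodal unions $L_1\cup L_2$ with $L_1\cap L_2$ a single point, carrying an embedded point at the node; and (iii)~non-reduced ``ribbon'' structures supported on a single line of $X$.

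The main obstacle is strata (ii) and especially (iii), where $Z$ is not a local complete intersection and the required vanishing must be extracted from $\Ext^1_{\mathcal{O}_X}(I_Z,\mathcal{O}_Z)$ via the local-to-global spectral sequence applied to an explicit local model of $Z$ around its singular or non-reduced locus. The key threefold-specific geometric inputs that close the argument are the smoothness of the Fano surface $F\subset \Gr(2,5)$ of lines on $X$ together with the absence of two-planes inside $X$; these two facts rule out precisely the normal-bundle pathologies which, in higher dimensions, produce the ``higher triple lines'' mentioned in the abstract and which are responsible for the failure of smoothness of $H(X)$ there.
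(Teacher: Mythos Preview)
Your approach is genuinely different from the paper's and, while plausible as a strategy, has a real gap in the final paragraph. The paper does not use deformation theory at all; instead it factors the Hilbert--Chow morphism as
\[
\widetilde{H(X)}\xrightarrow{\sigma_2}\Bl_{\Delta_F}(F\times F)\xrightarrow{\sigma_1}F\times F,
\]
proves that the center of $\sigma_2$ (the scheme $\tilde D_F=\Bl_{\Delta_F}(F\times F)\cap\tilde D$) is reduced and irreducible, and then observes that for a cubic \emph{threefold} this center has codimension one in the smooth variety $\Bl_{\Delta_F}(F\times F)$, hence is a Cartier divisor. Blowing up a Cartier divisor is an isomorphism, so $\widetilde{H(X)}\cong\Bl_{\Delta_F}(F\times F)$ and its $\mathbb Z_2$-quotient $H(X)\cong\Bl_{\Delta_F}\Sym^2F$ is smooth. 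The point is that $\tilde D_F$ can very well be \emph{singular} (and is, precisely over triple lines), but in codimension one this costs nothing.

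This is exactly where your argument breaks. You assert that smoothness of $F$ and the absence of $2$-planes in $X$ ``rule out precisely the normal-bundle pathologies which, in higher dimensions, produce the higher triple lines.'' But in dimension three a higher triple line is just an ordinary triple line (a line $L$ with a plane $P^2$ such that $P^2\cap X=3L$), and special smooth cubic threefolds \emph{do} contain triple lines; neither smoothness of $F$ nor the absence of planes excludes them. So the pathology you are trying to rule out is genuinely present for some smooth $X$, and your claimed inputs do not address it. For your vector-bundle approach to go through, you would have to carry out the $\Ext^1_{\mathcal O_X}(I_Z,\mathcal O_Z)$ computation for a type~(IV) scheme supported on a triple line and show the relevant surjectivity directly; this is the one case where the Jacobian rank drops in the paper's analysis, and it is not handled by the general facts you cite. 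The paper's codimension-one miracle is what makes the threefold case work regardless of whether triple lines are present, and your sketch does not supply a substitute for it.
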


The proof is based on the Abel-Jacobi map and the geometry of the theta divisor of the intermediate Jacobian. In this paper, we will provide a second proof, and the new argument naturally generalizes to higher dimensions.

\begin{theorem}\label{thm_main}
    Suppose $X$ is a general cubic hypersurface of dimension at least four. The Hilbert scheme of a pair of skew lines $H(X)$ is smooth. 
\end{theorem}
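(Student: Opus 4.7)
The plan is to realize $H(X)\hookrightarrow H(\mathbb{P}^n)$ as the zero locus of an explicit section of a vector bundle on the smooth ambient variety and then to verify transverse vanishing. Let $\pi:\mathcal{Z}\to H(\mathbb{P}^n)$ be the universal family and set $\mathcal{E}:=\pi_*\mathcal{O}_{\mathcal{Z}}(3)$. Every $[Z]\in H(\mathbb{P}^n)$ has Hilbert polynomial $2t+2$, so $\chi(\mathcal{O}_Z(3))=8$, and after checking $H^1(\mathcal{O}_Z(3))=0$ uniformly on $H(\mathbb{P}^n)$, $\mathcal{E}$ is a vector bundle of rank $8$. The cubic $f$ cutting out $X$ furnishes a tautological section $s_X\in H^0(\mathcal{E})$ via $[Z]\mapsto f|_Z$, whose vanishing scheme is precisely $H(X)$. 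Since $\dim H(\mathbb{P}^n)=4(n-1)$ and the generic fiber of $H(X)$ over $\Sym^2 F(X)$ is zero-dimensional, $\dim H(X)=2\dim F(X)=4n-12$, matching the expected codimension $8$. Thus $s_X$ is a regular section, and smoothness of $H(X)$ at $[Z]$ is equivalent to transversality of $s_X$ at $[Z]$.

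Transversality translates, via the ideal sequence $0\to\mathcal{O}_{\mathbb{P}^n}(-3)\to I_{Z/\mathbb{P}^n}\to I_{Z/X}\to 0$ and the functor $\Hom(-,\mathcal{O}_Z)$, into surjectivity of the natural restriction map
$$\Hom(I_{Z/\mathbb{P}^n},\mathcal{O}_Z)\longrightarrow H^0(\mathcal{O}_Z(3)),\qquad \phi\mapsto\phi(f),$$
whose kernel is precisely $T_{[Z]}H(X)=\Hom(I_{Z/X},\mathcal{O}_Z)$. On the open stratum where $Z=L_1\sqcup L_2$ is a disjoint pair, the map decomposes as a direct sum over the two components, and surjectivity on each summand is equivalent to smoothness of the Fano variety $F(X)$ at $[L_i]$; for general $X$ of dimension at least four, $F(X)$ is everywhere smooth of dimension $2n-6$, and this stratum is resolved. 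The substantive work lies on the boundary.

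The boundary is organized by the Hilbert--Chow morphism $H(X)\to\Sym^2 F(X)$: a non-reduced $[Z]\in H(X)$ lies over a pair of coplanar lines $L_1+L_2$ inside some $2$-plane $P\subset\mathbb{P}^n$ whose intersection with $X$ is a reducible cubic $L_1+L_2+L_3$, and $Z$ records the choice of an embedded point at $L_1\cap L_2$ whose direction is specified by a point in a projectivized normal space. The task becomes the explicit computation of $\Hom(I_{Z/X},\mathcal{O}_Z)$ in each such configuration of $(P,L_1,L_2,L_3)$ and embedded direction, and the verification that this space has the expected dimension $4n-12$.

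The hard part will be this local analysis and pinpointing when it fails. I expect that surjectivity of the restriction map breaks exactly when the residual line $L_3$ and the embedded direction are in a degenerate position relative to the plane section $P\cap X$, which is the content of the \emph{higher triple lines} condition mentioned in the abstract. The argument is completed by a dimension count in the incidence variety
$$\{(X,P): P\cap X\text{ is a higher triple line}\}\subset\bbP(H^0(\mathcal{O}_{\mathbb{P}^n}(3)))\times\Gr(3,n+1),$$
showing that its projection to the linear system of cubics has positive codimension once $\dim X\ge 4$. Consequently a general cubic $X$ contains no higher triple lines, $s_X$ vanishes transversally along all of $H(X)$, and $H(X)$ is smooth.
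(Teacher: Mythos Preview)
Your vector-bundle framework is a reasonable alternative to the paper's approach (which works instead through an explicit factorization of the Hilbert--Chow morphism as a composite of two blowups and analyzes the scheme structure of the second blowup center via Jacobian matrices). The $H^1$-vanishing and regular-section setup are fine, and the translation of transversality into surjectivity of $\Hom(I_{Z/\mathbb{P}^n},\mathcal{O}_Z)\to H^0(\mathcal{O}_Z(3))$ is correct.

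The gap is in your description of the boundary. You write that every non-reduced $[Z]$ lies over a pair of coplanar lines $L_1+L_2$ in a $2$-plane $P$ with an embedded point at their intersection; this is only the type~(III) stratum, and it is the \emph{easy} part --- the paper dispatches it quickly (Proposition~\ref{prop_typeIII}, Corollary~\ref{cor_H(X)smoothoffdiagonal}) because it sits away from the diagonal of $\Sym^2F$. You have omitted the two strata over the diagonal: type~(II), a double line arising from a subbundle $\mathcal{O}_L\subset N_{L|X}$ (whose double structure lies in a quadric surface, not a plane, so no residual cubic $L_1+L_2+L_3$ exists); and type~(IV), a planar double line with an embedded point, arising from $\mathcal{O}_L(1)\subset N_{L|X}$. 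The entire obstruction to smoothness of $H(X)$ lives on the type~(IV) locus, specifically over lines of the second type. The higher-triple-line condition is a condition on the \emph{second-order} data of the cubic equation along such a line --- the degeneracy of the matrix of linear forms~\eqref{eqn_Lmatrix} in the $(n-2)$-plane $P^{n-2}$ tangent to $X$ along $L$ --- not a condition on a residual line $L_3$ in a $2$-plane section. Your proposed incidence variety in $\Gr(3,n+1)$ therefore does not capture it; the correct parameter space involves $\Gr(n-1,n+1)$ (Section~\ref{sec_dimcount}).

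So the proposal lands on the right endpoint (general $X$ has no higher triple lines, hence $H(X)$ is smooth) but skips the stratum where the actual work happens and misidentifies the geometric configuration that must be excluded. Carrying out your transversality check at a type~(IV) scheme over a line of the second type would, in effect, reproduce the rank computation of the Jacobian matrix~\eqref{eqn_generalJacobian1}; that is the content you still need to supply.
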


Our method is to describe the birational geometry of $H(X)$. We relate to the Chow variety $ \Sym^2F$ parameterizing pairs of lines on $X$ with reduced structure. There is a Hilbert-Chow morphism which factors through the Hilbert scheme of two points of $F$
\begin{equation}\label{eqn_IntroHC}
    H(X)\xrightarrow{\sigma_2} F^{[2]}\xrightarrow{\sigma_1} \Sym^2F.
\end{equation}

The map $\sigma_1$ blows up the diagonal, and the second map $\sigma_2$ blows up the strict transform of the subvariety $D_F'$ of $\Sym^2F$ parameterizing pairs of incidental lines.

\begin{figure}[ht]
\centering
\includegraphics[width=0.6\textwidth]{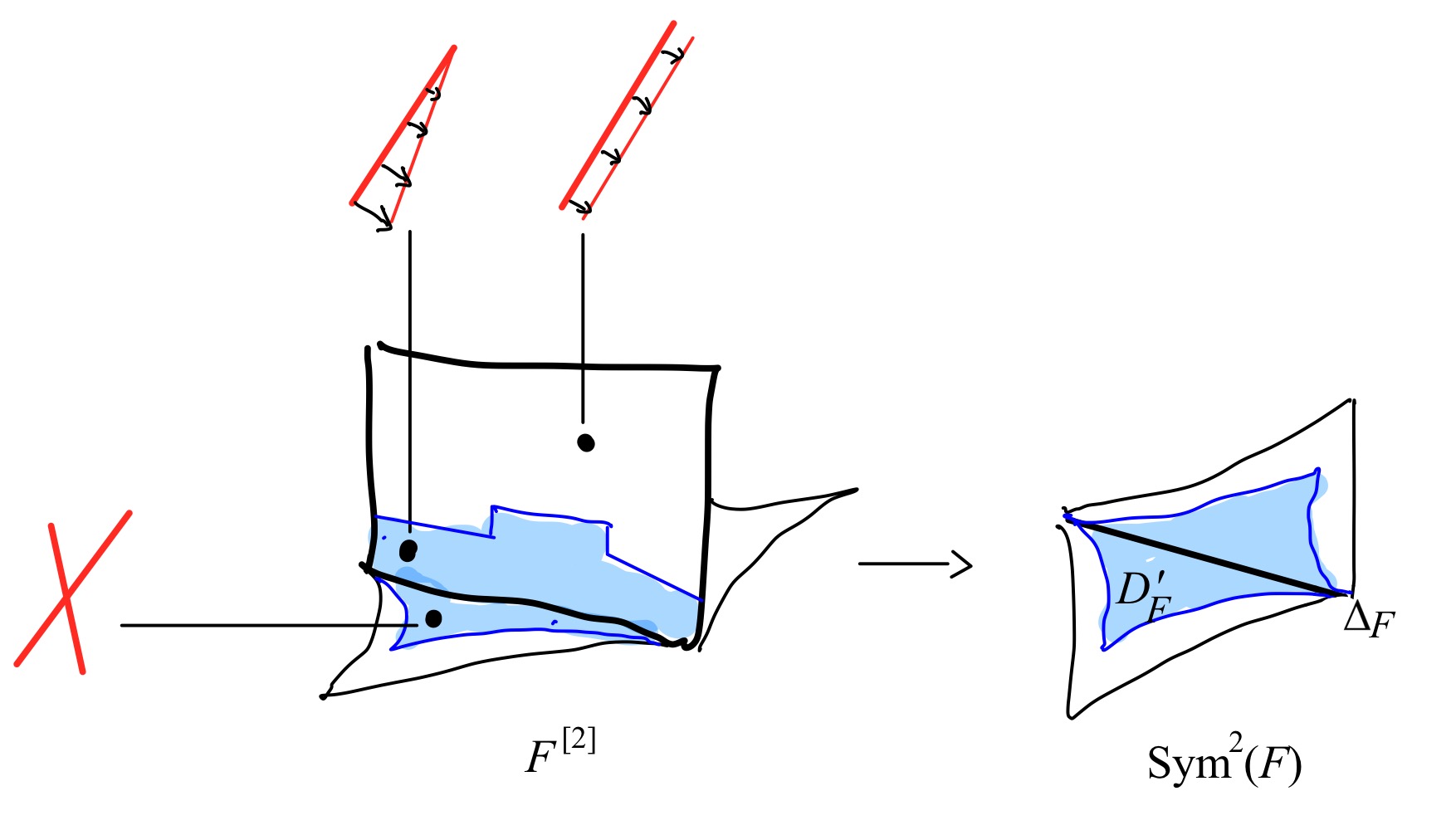}
\caption{\label{figure_intro} $F^{[2]}$ as an "intermediate" Hilbert scheme of a pair of skew lines}
\end{figure}

The composition \eqref{eqn_IntroHC} is an isomorphism at the locus of pairs of disjoint lines, which is called the type (I) subscheme of $X$, as in \cite{CCN}; At a general point of $D_F'$ parameterizing two lines $(L_1,L_2)$ intersecting at a point, the blow-up $\sigma_2$ provides collections of $\mathbb P^3$ containing the plane $\textup{Span}(L_1,L_2)$, hence determining an embedded point supported on the intersection  $L_1\cap L_2=\{x\}$. Such a subscheme is of type (III). Over the diagonal, the exceptional locus $E$ of $\sigma_1$ parameterized a line $L$ together with a normal direction $v\in H^0(N_{L|X})$. Depending on if $v$ comes from a sub-line bundle $\mathcal{O}_L$ or $\mathcal{O}_L(1)$, the infinitesimal data will be different. If $v\in H^0(\mathcal{O}_L)$, then it corresponds to a point on $F^{[2]}$ away from the second blow-up center and determines a subscheme of type (II); while if $v\in H^0(\mathcal{O}_L(1))$, it is on the center of $\sigma_2$, and the second blowup in the same way provides a $\mathbb P^3$ containing the plane $\textup{Span}(L,v)$ and determines an embedded point supported on $L$. This is called a type (IV) subscheme. 

In figure \ref{figure_intro} we depict the first blow-up. The blue region denotes the incidental locus $D_F'$ and its strict transform. The non-equidistribution of normal directions over the diagonal $\Delta_F$ is a reflection of the fact that there is a distinction between two types of lines based on the splitting data of the normal bundle $N_{L|X}$. We also note that the portrait is for $\dim(X)\ge 4$. When $\dim(X)=3$, $D_F'$ does not contain the diagonal.

To understand Theorem \ref{thm_H(X)3fold}, when $\dim(X)=3$, the subvariety $D_F'$ is a divisor, so the second blowup $\sigma_2$ is an isomorphism. Hence, the Hilbert scheme of a pair of skew lines $H(X)$ is always smooth as long as $X$ is smooth. However,  when $\dim(X)\ge 4$, $D_F'$ has higher codimension, so if its strict transform is singular, the second blow-up will produce singularities on $H(X)$. We need to describe when this will happen.





\subsection{Higher triple lines}
Let $X\subseteq \mathbb P^n$  be a cubic hypersurface. A line $L\subseteq X$ is called of the \textit{first type} if there is a codimension-three linear subspace $P^{n-3}\subseteq \mathbb P^{n}$ containing $L$ and is tangent to $X$ at all points on $L$. It is called a line of the \textit{second type} if there is a codimension-two linear subspace $P^{n-2}\subseteq \mathbb P^{n}$ tangent to $X$ along $L$. Lines of the second type form a closed subspace in $F$. They are distinguished based on the first-order data of the cubic equation near $L$. These notions are introduced and studied in \cite{CG}.

We call a line $L\subseteq X$ on a smooth cubic hypersurface a \textit{higher triple line} if it is the second type, and the second-order data of the cubic equation near $L$ is linearly dependent when restricted to the codimension two space $P^{n-2}$. The three notions are related by the specialization
$$\textup{lines of the first type} \rightsquigarrow \textup{lines of the second type}\rightsquigarrow \textup{higher triple lines}.$$

Alternatively, a line has the second type if there is a linear codimension two subspace $P^{n-2}$ and $P^{n-2}\cap X$ is singular at all points at $L$. When $X$ is futhur a higher triple line, $P^{n-2}\cap X$ has transversal $A_2$ singularity everywhere on the line.


For example, when $X$ is a cubic threefold, a higher triple line is a triple line, i.e., there is a plane $\mathbb P^2$ such that $\mathbb P^2\cap X=3L$; When $X$ is a cubic fourfold, being a higher triple line is more restrictive, it implies $L$ is contained in a cone of a cuspidal cubic curve as a $\mathbb P^3$-section of $X$ (see Figure \ref{figure_coneofcusp}). For special cubic fourfolds, higher triple lines may vary in a continuous family — Fermat cubic fourfold has 45 Eckardt points, and corresponds to (at least) 45 one-parameter families of higher triple lines. However, we show that having a higher triple line is a proper closed condition in the moduli space of cubic hypersurfaces. 

\begin{proposition}(cf. Proposition \ref{prop_dim-count}) \label{prop_Intro_general}
    A general cubic hypersurface does not contain a higher triple line.
\end{proposition}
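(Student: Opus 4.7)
The plan is to prove the proposition via a dimension count on an appropriate incidence variety. Let $V = H^0(\mathbb{P}^n, \mathcal{O}(3))$ be the space of cubic forms, and let $\mathcal{F}$ be the flag variety of pairs $(L, P)$ with $L \subset P \subset \mathbb{P}^n$, where $L$ is a line and $P$ a codimension-two linear subspace, so $\dim \mathcal{F} = 2(n-1) + 2(n-3) = 4n - 8$. Define the incidence variety
\[
    \mathcal{I} = \{(X, L, P) \in \mathbb{P}(V) \times \mathcal{F} : L \text{ is a higher triple line of } X \text{ with respect to } P\}.
\]
I aim to show $\dim \mathcal{I} \le \dim \mathbb{P}(V) - 1$, so that the projection $\mathcal{I} \to \mathbb{P}(V)$ is not dominant. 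Since the higher-triple-line condition is closed, this gives the proposition.

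For fixed $(L, P) \in \mathcal{F}$, choose adapted coordinates so that $L = \{x_2 = \cdots = x_n = 0\}$ and $P = \{x_{n-1} = x_n = 0\}$, and expand a cubic containing $L$ as
\[
    f = \sum_{i=2}^n a_i(x_0, x_1)\,x_i + \sum_{2 \le i \le j \le n} b_{ij}(x_0, x_1)\,x_i x_j + c(x_2, \ldots, x_n),
\]
with $\deg a_i = 2$ and $\deg b_{ij} = 1$. The higher triple line condition translates to: (a) $P \cap X$ is singular along $L$, which forces $a_i \equiv 0$ for $i = 2, \ldots, n-2$, a linear codimension-$3(n-3)$ condition; and (b) the transversal quadric to $L$ inside $P \cap X$ degenerates at every point of $L$, equivalent to $\det M \equiv 0$, where $M(x_0, x_1) = (b_{ij})_{2 \le i, j \le n-2}$ is the $(n-3)\times(n-3)$ symmetric matrix of linear forms. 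Since $\det M$ is a binary form of degree $n-3$, this imposes $n - 2$ polynomial equations on the $b_{ij}$. The supplementary $A_2$ condition on the transversal cubic term is open and does not affect the codimension.

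The key technical step is to show that condition (b) cuts codimension exactly $n - 2$ in the affine space of symmetric matrices of linear forms. This can be verified directly in small cases: for $n = 4$, $M$ is the single linear form $b_{22}$ and $M \equiv 0$ is codimension $2 = n - 2$; for $n = 5$, the locus $\{\det M \equiv 0\}$ in the six-dimensional space of symmetric $2 \times 2$ matrices of linear forms is the three-dimensional family $b_{22} = \alpha$, $b_{23} = s \alpha$, $b_{33} = s^2 \alpha$, with $\alpha$ a linear form and $s$ a scalar, so codimension $n - 2 = 3$. Adding the codimensions from (a) and (b) to the $4$ conditions of containing $L$, the fiber of $\mathcal{I} \to \mathcal{F}$ over a generic $(L, P)$ has projective dimension at most $\dim V - 4n + 6$, giving
\[
    \dim \mathcal{I} \le (4n - 8) + (\dim V - 4n + 6) = \dim \mathbb{P}(V) - 1,
\]
as desired.

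The main obstacle is verifying the codimension claim in (b) uniformly for all $n$; the small-case analysis above suggests the answer is always $n - 2$, but a clean uniform argument may require a Porteous-type calculation for families of symmetric matrices over $\mathbb{P}^1$ or an explicit stratification of the rank-deficient locus generalizing the $n=5$ case. As a robustness check, one could alternatively exhibit a single smooth cubic hypersurface (in each dimension) with no higher triple line and conclude by semicontinuity, bypassing the determinantal analysis entirely.
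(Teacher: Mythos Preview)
Your approach is essentially the paper's: both set up an incidence correspondence and arrive at the same arithmetic $4 + 3(n-3) + (n-2) = 4n-7$ conditions against a $(4n-8)$-dimensional parameter space of flags. The paper organizes it slightly differently---first counting inside $\mathbb{P}^{n-2}$ (Lemma~\ref{lemma_transversalA2count}) to get codimension $\ge 2n-1$, then pulling back through the section map $\mathbb{P}(\Sym^3\C^{n+1})\times Gr(n-1,n+1)\to W$---but the content is the same.

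You are right that the codimension of (b) is the crux, and in fact you are more careful than the paper, which simply asserts that $\det S = 0$ ``provides another $n-2$ conditions'' without checking that these equations actually cut codimension $n-2$. One caution: the condition you wrote, $\det M\equiv 0$, is strictly weaker than the paper's higher triple line condition (a \emph{common} null vector for $A^0,A^1$) once $n\ge 6$; for instance with $k=n-3=3$ the matrix $S=\begin{pmatrix}0&x_0&0\\x_0&0&x_1\\0&x_1&0\end{pmatrix}$ has $\det S\equiv 0$ but $\ker A^0\cap\ker A^1=0$. Since the actual locus is smaller, this only helps.

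The uniform argument you want is a second incidence, this time over the null vector. Writing $M=A^0x_0+A^1x_1$ with $A^p$ symmetric $k\times k$ ($k=n-3$), the higher triple line condition is $\exists\,[v]\in\mathbb{P}^{k-1}$ with $A^0v=A^1v=0$. For fixed $[v]$ this is $2k$ independent linear conditions on the $k(k+1)$-dimensional space of pairs $(A^0,A^1)$, so the incidence variety has dimension $(k-1)+k(k+1)-2k=k^2-1$ and its image has codimension $\ge k+1=n-2$. This closes the gap without any determinantal machinery and makes both your argument and the paper's rigorous.
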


The second main result of this paper is

\begin{theorem}\label{thm_mainprecise}
Let $X$ be a smooth cubic hypersurface with $\dim(X)\ge 4$. Then $H(X)$ is smooth if and only if the $X$ has no higher triple lines.
\end{theorem}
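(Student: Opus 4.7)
The strategy is to exploit the blow-up description $H(X) = \Bl_{\tilde D_F'} F^{[2]}$ recalled in the introduction, where $\tilde D_F' \subset F^{[2]}$ is the strict transform of the incidence locus $D_F' \subset \Sym^2 F$ under the diagonal blow-up $\sigma_1$. The Fano variety $F$ is smooth because $H^1(L,N_{L|X}) = 0$ for every line $L \subset X$ (both the first- and second-type normal-bundle splittings consist of summands of degree $\ge -1$), so the two-point Hilbert scheme $F^{[2]}$ is also smooth. Since the blow-up of a smooth variety along a smooth closed subscheme is smooth, the ``if'' direction will follow once I verify that, under the assumption that $X$ has no higher triple lines, $\tilde D_F'$ is smooth and regularly embedded in $F^{[2]}$.

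To check smoothness of $\tilde D_F'$, I separate into two regions. Away from the exceptional divisor $E_1$ of $\sigma_1$, the strict transform is isomorphic to $D_F' \setminus \Delta_F$, which is smooth at every pair of distinct incident lines by a direct tangent-space computation on the incidence correspondence in $F \times F$. Over $E_1$, the fiber of $\tilde D_F'$ above $[L] \in \Delta_F$ is the sub-projective space of $\bbP H^0(L,N_{L|X})$ cut out by the condition that a tangent direction $v$ factors through a sub-line bundle $\mathcal O_L(1) \hookrightarrow N_{L|X}$, as described in the introduction. The key local computation is to write down explicit equations for $\tilde D_F'$ near $([L],[v])$ using Taylor expansions of the cubic equation $f$ of $X$ in a normal-form coordinate system adapted to $L$. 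Lines of the second type are characterised by the vanishing of the linear part of these equations along the characteristic codimension-two subspace $P^{n-2}$; the additional rank condition that governs whether the second-order part is non-degenerate is precisely the definition of a higher triple line. When $L$ is not a higher triple line, the Jacobian has maximal rank and $\tilde D_F'$ is smooth at $([L],[v])$; when $L$ is a higher triple line, the rank drops and $\tilde D_F'$ becomes singular there.

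For the ``only if'' direction, starting from a higher triple line $L$, I would construct a distinguished point $[Z] \in H(X)$ lying over $[L] \in \Delta_F$, namely a type (IV) subscheme with embedded point in the characteristic direction, and compute $T_{[Z]} H(X) = \Hom_X(I_Z,\mathcal O_Z)$ directly from the explicit ideal-sheaf description coming from the Hilbert-Chow factorisation. A Koszul-type resolution of $I_Z$ reduces this $\Hom$-group to a cohomology computation on $L \cong \bbP^1$, and the dimension exceeds the expected $\dim H(X) = 4(\dim X - 2)$ by exactly the nullity of the second-order matrix computed above, producing a singularity of $H(X)$ at $[Z]$. The main obstacle is the local computation in the previous paragraph: choosing the right coordinates on $F^{[2]}$ around a point of $E_1$, writing down the ideal of $\tilde D_F'$ in these coordinates to the appropriate order, and matching the resulting rank condition with the classical higher-triple-line condition on $f$. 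Once this dictionary is in place, both directions of the theorem follow from the same linear-algebra calculation.
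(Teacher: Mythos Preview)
Your ``if'' direction is essentially the paper's approach: work on the double cover, identify the second blow-up center as the scheme-theoretic intersection $\tilde D_F = \Bl_{\Delta_F}(F\times F)\cap \tilde D$, and compute the Jacobian of its defining equations over the diagonal using the normal-form expansion of the cubic. The paper carries this out in Sections~4--5, and the rank-drop condition it extracts (Proposition~5.6) is exactly the degeneracy of the matrix $S$ of linear forms, i.e.\ the higher-triple-line condition. One point you gloss over: the paper passes from $\widetilde{H(X)}$ to $H(X)$ by checking that the branch divisor $\tilde E$ (the strict transform of the exceptional divisor of $\sigma_1$) is itself smooth, which requires a separate but parallel Jacobian check (Remark~5.8). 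Also, the fiber of $\tilde D_F$ over a point of the diagonal is not a linear subspace of $\bbP H^0(N_{L|X})$ but a Segre variety $V_{n-3}$ or $V_{n-2}$ (the extra $\bbP^1$ factor records the point $p\in L$); this does not affect the argument but your description is slightly off.

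Your ``only if'' direction departs from the paper. The paper argues: the blow-up center $\tilde D_F$ is \emph{reduced} (Proposition~6.2, proved via a projective-dimension bound and Auslander--Buchsbaum), and once reduced, a singular center of codimension $\ge 2$ forces the blow-up to be singular. Reducedness is the crux---without it, blowing up a non-reduced scheme could still yield something smooth, so the paper's commutative-algebra detour (Lemma~6.1) is not optional for their route. You instead propose to compute $\Hom_X(I_Z,\mathcal O_Z)$ directly for a type~(IV) scheme $Z$ supported on a higher triple line. This is a legitimate alternative and, if it works, bypasses the reducedness argument entirely. The concern is your ``Koszul-type resolution of $I_Z$'': a type~(IV) scheme has an embedded point, so $Z$ is not a local complete intersection in $X$ and $I_Z$ does not admit a Koszul resolution. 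You would need a more ad hoc free resolution (as Chen--Coskun--Nollet do for $H(\bbP^n)$), and tracking how the higher-triple-line condition enters that computation is where the real work lies. This is plausible but not as routine as your sketch suggests.
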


Then, Proposition \ref{prop_Intro_general} and Theorem \ref{thm_mainprecise} recover Theorem \ref{thm_main}.

\subsection{Singularities of $H(X)$}
For special smooth cubic hypersurface $X$ when $\dim(X)\ge 4$, we want to characterize the singularities of $H(X)$. We denote $\widetilde{H(X)}$ the fiber product of $H(X)$ with the double cover $F\times F\to \Sym^2F$.
\begin{proposition} (cf. Corollary \ref{cor_H(X)normal}, Proposition \ref{prop_H(X)tilde-hypersing}) 
    $H(X)$ is normal and has its natural branched double $\widetilde{H(X)}$ has hypersurface singularities.\end{proposition}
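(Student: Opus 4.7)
The plan is to realize $\widetilde{H(X)}$ as the blowup of a smooth variety, analyze the local structure of the blowup at its singular points, and then descend Cohen--Macaulayness and regularity in codimension one to $H(X)$. Base-changing the Hilbert--Chow factorization \eqref{eqn_IntroHC} along $F\times F \to \Sym^2 F$ gives
$$\widetilde{H(X)} \xrightarrow{\widetilde\sigma_2} \widetilde{F^{[2]}} \to F\times F,$$
where $\widetilde{F^{[2]}} := F^{[2]}\times_{\Sym^2 F}(F\times F) \cong \Bl_{\Delta_F}(F\times F)$ is smooth (since $F$ is smooth for a smooth cubic hypersurface, and the blowup of a smooth variety along a smooth center is smooth). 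The map $\widetilde{F^{[2]}}\to F^{[2]}$ is a finite flat degree two cover branched along the smooth exceptional divisor of $\sigma_1$, so the base change identifies $\widetilde\sigma_2$ as the blowup of $\widetilde{F^{[2]}}$ along the strict transform $\widetilde D$ of the ordered incidence locus $\widetilde D_F \subseteq F\times F$.

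The main local step is to verify that each singular point of $\widetilde{H(X)}$ has a hypersurface germ. Smooth points of the blowup correspond to smooth points of $\widetilde D$, so one focuses on preimages of $\mathrm{Sing}(\widetilde D)$. By Theorem \ref{thm_mainprecise} these lie over pairs $([L_1], [L_2])$ involving a higher triple line. I plan to parametrize a Zariski neighborhood of such a point in $F\times F$ by sections of $N_{L_1/X}\oplus N_{L_2/X}$, expand the cubic equation to second order at $x=L_1\cap L_2$, and compute the defining ideal of $\widetilde D_F$. The higher triple line condition introduces exactly one extra relation beyond the generic codimension $n-3$ vanishing, and this extra relation becomes the single defining equation of the singular germ of $\widetilde{H(X)}$ inside a smooth local model of dimension $\dim\widetilde{H(X)}+1$. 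This exhibits the completed local ring as a hypersurface ring.

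For normality of $H(X)$, the involution swapping factors in $F\times F$ lifts to $\widetilde{H(X)}$ with quotient $H(X)$, and in characteristic zero the Reynolds operator splits $\mathcal O_{H(X)}$ as a direct summand of $\pi_*\mathcal O_{\widetilde{H(X)}}$. Since $\widetilde{H(X)}$ is Cohen--Macaulay (being locally a hypersurface in a smooth ambient), Cohen--Macaulayness descends to $H(X)$, giving the $S_2$ condition. The $R_1$ condition follows from Theorem \ref{thm_mainprecise} together with Proposition \ref{prop_Intro_general}: the singular locus of $H(X)$ is contained in the subvariety of pairs involving at least one higher triple line, which has codimension at least $2$ by a dimension count on the higher triple locus in $F$. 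Serre's criterion then yields normality.

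The principal technical obstacle is the local hypersurface computation. For $n\ge 6$, the blowup center $\widetilde D$ has codimension $n-3 \ge 3$, so one cannot simply exhibit $\widetilde D$ as a $2$-generated ideal and conclude a hypersurface embedding into $\widetilde{F^{[2]}}\times\mathbb P^1$, as one would for cubic fourfolds. Instead, one has to analyze the splitting of the normal bundle $N_{L/X}$ at a higher triple line and use the second-order data of the cubic equation restricted to the appropriate pencil to produce the single defining equation of the singular germ. Carrying out this computation in local coordinates, and verifying that only one extra generator beyond the generic transverse equations is introduced by the higher triple line degeneracy, is the heart of the argument.
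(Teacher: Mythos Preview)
Your overall strategy is sound, but the local computation you plan is aimed at the wrong locus. The singularities of the blowup center $\tilde D_F$ do \emph{not} lie over pairs of distinct incident lines $(L_1,L_2)$ with $L_1\cap L_2=\{x\}$: Proposition~\ref{prop_typeIII} shows $D_F$ is smooth away from the diagonal for every smooth $X$. All singular points of $\tilde D_F$ lie over the diagonal $\Delta_F$, and more precisely over pairs $(L,L)$ with $L$ a higher triple line (this is Theorem~\ref{thm_summarySec6}). So parametrizing by sections of $N_{L_1/X}\oplus N_{L_2/X}$ and expanding at the point $x=L_1\cap L_2$ will only confirm smoothness there; it will not produce the singular germ you are after. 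The correct local computation takes place in blowup coordinates over the diagonal at a line of the second type, and the ``exactly one extra relation'' you anticipate is the statement that the Jacobian rank drops from $n+5$ to at worst $n+4$ (Proposition~\ref{prop_Jrank-2nd-type}). That does give hypersurface singularities for $\tilde D_F$, and from there your conclusion for $\widetilde{H(X)}$ follows as you indicate.

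There is a second gap: you identify the center of $\widetilde\sigma_2$ with the \emph{strict transform} of the incidence locus, but a priori the center is the scheme-theoretic intersection $\Bl_{\Delta_F}(F\times F)\cap\tilde D$, which could carry embedded components over its singular locus. The paper spends Section~\ref{sec_reducedness} proving this intersection is reduced (via a depth estimate, Lemma~\ref{lemma_pdtrick}, together with the dimension bound of Lemma~\ref{lemma_dimSing}); only then does it coincide with the strict transform. Once your hypersurface computation is done at the correct locus it would give reducedness as a byproduct, but you should make this explicit.

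Your route to normality---Cohen--Macaulayness of $\widetilde{H(X)}$ descending along the $\Z_2$-quotient via the Reynolds operator, plus $R_1$ from a dimension count---is legitimate and genuinely different from the paper's. The paper instead argues directly that the blowup center is reduced, hence its ideal sheaf is integrally closed, so the Rees algebra (and thus the blowup) is normal. Your approach has the bonus of yielding Cohen--Macaulayness of $H(X)$, but it is logically downstream of the hypersurface claim, so the mislocated computation above would also block it as written.
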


In addition, just like a pair of skew lines span a $\mathbb P^3$, each subscheme $Z\in H(X)$ of $X$ determines a unique 3-dimensional projective subspace $\mathbb P^3_Z$ of $\mathbb P^n$.

\begin{proposition} (cf. Proposition \ref{prop_singcubicsurface})
  Suppose $Z\in H(X)$ is a singularity,  then the corresponding $\mathbb P^3_Z$ is either contained in $X$ or intersect $X$ along a cubic surface $C$. In the latter case, the cubic surface $C$ is 
    \begin{itemize}
        \item  a cone of a planer cuspidal cubic curve, or 
        \item the union of a plane and a quadric cone meeting tangentially along a line.
    \end{itemize}
\end{proposition}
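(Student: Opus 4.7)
The plan is to use Theorem~\ref{thm_mainprecise} together with the type classification (I)--(IV) of subschemes in $H(X)$ to locate the singular stratum, and then to compute the cubic surface $C=\mathbb{P}^3_Z\cap X$ explicitly in coordinates adapted to a higher triple line.

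First, I would restrict attention to types (III) and (IV). Type (I) parameterizes pairs of disjoint lines and is the smooth open stratum. Type (II) points, where $v\in H^0(\mathcal{O}_L)\subset H^0(N_{L|X})$, lie on the exceptional divisor of $\sigma_1$ but away from the center of $\sigma_2$; since $\sigma_1$ is the blow-up of a smooth subvariety in a smooth ambient space, such points are smooth in $F^{[2]}$ and $\sigma_2$ is a local isomorphism nearby, so $H(X)$ is smooth there. By Theorem~\ref{thm_mainprecise}, any singular $Z$ must therefore be of type (III) or (IV), and its reduced support involves a higher triple line.

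For type (IV), $Z$ is supported on a higher triple line $L$ with an embedded point at the zero of a section $v\in H^0(\mathcal{O}_L(1))\subset H^0(N_{L|X})$, and $\mathbb{P}^3_Z$ is the three-plane spanned by $L$ and the rank-one sub-bundle containing $v$. I would choose coordinates $[x_0{:}\cdots{:}x_n]$ so that $L=\{x_2=\cdots=x_n=0\}$ and the codimension-two tangent subspace $P^{n-2}=\{x_4=\cdots=x_n=0\}$, then write the defining cubic $F$ of $X$ in the normal form dictated by the second-type and higher-triple-line conditions. Restricting $F$ to $\mathbb{P}^3_Z$ and discarding the eliminated variables, one obtains a cubic equation which, after a linear change of variables, takes the shape $y_1 y_2^2 + y_3^3$, i.e.\ the cone from $[1{:}0{:}0{:}0]$ over a planar cuspidal cubic.

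For type (III), $Z=L_1\cup L_2$ with an embedded point at $p=L_1\cap L_2$ encoding a normal direction out of $P_0:=\textup{Span}(L_1,L_2)$, and $\mathbb{P}^3_Z$ is the resulting three-plane. The cubic $C=\mathbb{P}^3_Z\cap X$ contains $L_1$, $L_2$, and the residual line $L_3\subset P_0$ from the decomposition $P_0\cap X=L_1+L_2+L_3$. The condition that the blow-up $\sigma_2$ acquires a singularity at this point forces the plane $P_0$ itself to be a component of $C$, so that $C=P_0\cup Q$ for a residual quadric $Q$. The transversal $A_2$ characterization of higher triple lines then forces $Q$ to be a quadric cone meeting $P_0$ tangentially along a line through $p$.

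The main obstacle I expect is the type (IV) computation: one must verify that the combination of the second-type condition (which aligns the proportional quadratic forms $q_2,q_3$ associated with the normal directions $x_2,x_3$) and the higher triple line condition (which cuspidally degenerates the second-order Hessian data along $L$) yields exactly the cuspidal cone normal form, with no residual flexibility that would allow a milder degeneration such as a nodal cone or three concurrent planes. This requires a careful bookkeeping of how the rank-one sub-bundle $\mathcal{O}_L(1)\subset N_{L|X}$ selects the specific $\mathbb{P}^3_Z$ and how the restriction of $F$ acquires the $y_3^3$ cuspidal term. The type (III) case is more transparent once $P_0\subset X$ is established, since tangency of $Q$ to $P_0$ along a line translates directly into the $A_2$ description of higher triple lines.
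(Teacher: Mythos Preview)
Your plan has a structural error: you split the two conclusions of the proposition between types (IV) and (III), but $H(X)$ is smooth at every type (III) point. This is Corollary~\ref{cor_H(X)smoothoffdiagonal}: the second blow-up center $\tilde D_F$ is smooth away from the diagonal (Proposition~\ref{prop_typeIII}), so $H(X)$ is smooth along the entire type (I) and type (III) loci. Singularities occur only at type (IV) schemes supported on higher triple lines. Consequently your type (III) argument is not only unnecessary but would not produce anything, and you would never reach the ``plane plus quadric cone'' case.

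In the paper's proof both bullets arise from the same type (IV) computation. For a singular $Z$, the plane $P^2$ determined by the $\mathcal O_L(1)$-direction is the specific one of Remark~\ref{remark_pv} (spanned by $L$ and the common kernel of $A^0,A^1$), and $\pi(Z)=\mathbb P^3_Z$ satisfies $P^2\subseteq\mathbb P^3_Z\subseteq P^{n-2}$, where $P^{n-2}=\{x_2=x_3=0\}$ is the tangent $(n-2)$-plane. Restricting the cubic equation \eqref{eqn_cubicnfold@2ndtype} to such a $\mathbb P^3_Z$ gives
\[
l(x_0,x_1)\,x_2^2+c(x_2,x_3)=0
\]
for some linear form $l$ and cubic form $c$. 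The dichotomy is whether $x_2\mid c(x_2,x_3)$: if not, the plane curve $l\,x_2^2+c=0$ has a cusp at the origin and $C$ is a cone over it; if so, the curve factors as a line and a conic tangent at a point, and $C$ is a plane and a quadric cone meeting tangentially along $L$. So the case distinction is algebraic, not a type (III)/(IV) split. Your anticipated obstacle---forcing the exact normal form $y_1y_2^2+y_3^3$---is also misdirected: the paper does not claim a single normal form but allows a general $c(x_2,x_3)$, and the two bullets exhaust the possibilities for it. (Incidentally, your coordinate choice $P^{n-2}=\{x_4=\cdots=x_n=0\}$ has the wrong codimension; compare \eqref{eqn_cubicnfold@2ndtype}.)
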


\subsection{Strategy of the proof}
The proof is based on the description of the Hilbert-Chow morphism \eqref{eqn_IntroHC}. In fact, we are oversimplifying the discussion above — To show that the Hilbert-Chow morphism has the factorization \eqref{eqn_IntroHC}, we take into account the scheme structure of the blow-up centers. In other words, we need to know whether the scheme structure of the blow-up center is \textit{reduced}.


By the universal property of blow-up, the birational morphism $H(X)\to \Sym^2F$ is the restriction of the Hilbert-Chow morphism of the Hilbert scheme of a pair of skew lines on the ambient projective space. To avoid talking about singularities on the diagonal, we can take a branched double cover $\widetilde{H(X)}$ of $H(X)$ by ordering the pair of lines. Now, the Chow variety becomes $F\times F$ and is smooth.

The Hilbert-Chow morphism on the branched double cover of $H(\mathbb P^n)$ factors as blowup of the diagonal, and then blow up the strict transform $\tilde{D}$ of the subvariety $D$ parameterizing pairs of incidental lines (cf. \cite[Prop. 3.3]{YZ_SkewLines}). The factorization is shown in the second column of the following diagram





\begin{figure}[ht]
    \centering
\begin{equation}\label{eqn_intro-diagram}
\begin{tikzcd}
 \widetilde{H(X)}\arrow[d,"\sigma_2"]\arrow[r,hookrightarrow]  &  \widetilde{H(\mathbb P^n)}\cong \textup{Bl}_{\tilde{D}}\textup{Bl}_{\Delta}\big (Gr(2,n+1)\times Gr(2,n+1)\big )\arrow[d] &\\
\textup{Bl}_{\Delta_F}(F\times F) \arrow[d,"\sigma_1"] \arrow[r,hookrightarrow]  & \textup{Bl}_{\Delta}\big (Gr(2,n+1)\times Gr(2,n+1)\big )\arrow[d]\arrow[r,hookleftarrow]&\tilde{D}\arrow[d]\\
F\times F\arrow[r,hookrightarrow]& Gr(2,n+1)\times Gr(2,n+1)\arrow[r,hookleftarrow]&D.
\end{tikzcd}
\end{equation}
\end{figure}

In the first column, $\sigma_1$ is the blow-up of the diagonal $\Delta_F$ of $F\times F$ with reduced structure. $\sigma_2$ is to blow up the scheme-theoretic intersection 
\begin{equation}\label{eqn_schemeintsect}
   \tilde{D}_F:=\textup{Bl}_{\Delta_F}(F\times F)\cap \tilde{D}.
\end{equation}

If the the blow-up center $\tilde{D}_F$ is smooth, then $\widetilde{H(X)}$, together with its $\Z_2$ quotient, $H(X)$, will be smooth. So, proving Theorem \ref{thm_H(X)3fold} and \ref{thm_mainprecise} reduces to 

\begin{problem} \label{problem}
   Determine when the scheme-theoretical intersection \eqref{eqn_schemeintsect} is smooth. 
\end{problem}

First, note that the intersection \eqref{eqn_schemeintsect} is \textit{not transverse} in the sense that the codimension of the intersection is not the sum of the codimensions in the ambient space $ \textup{Bl}_{\Delta}\big (Gr(2,n+1)\times Gr(2,n+1)\big )$. In fact, $\textup{codim}(\tilde{D})=n-2$, $\textup{codim}(\textup{Bl}_{\Delta_F}(F\times F))=8$, but the scheme \eqref{eqn_schemeintsect} has codimension $n+5$. 
This does not forbid the intersection to be smooth. As an example, two lines intersecting at a point in $\mathbb P^3$ are not transverse, but their intersection (a reduced point) is still a smooth variety.

To show the scheme \eqref{eqn_schemeintsect} is smooth, it is equivalent to show that the intersection is \textit{clean} (cf. \cite[Sec. 5.1]{Li}), i.e., their set-theoretical intersection is smooth. This amounts to showing that the intersection of the tangent spaces
\begin{equation}\label{eqn_intro-tangentspace}
    T_{\textup{Bl}_{\Delta_F}(F\times F),p}\cap T_{\tilde D,p}
\end{equation} 
has the expected dimension at all points.

We found that this is true for a general cubic hypersurface $X$. However, for certain special $X$, the intersection \eqref{eqn_intro-tangentspace} may have a larger dimension at certain points above the diagonal. Supposedly, the scheme structure at these poins may be non-reduced, but we shall show this will not happen.

The core of this paper is to solve Problem \ref{problem} by showing

\begin{proposition}\label{intro_keyprop}
Let $X$ be a smooth cubic hypersurface with $\dim(X)\ge 3$. Then, the scheme-theoretical intersection \eqref{eqn_schemeintsect} is reduced and irreducible. Moreover,  the intersection \eqref{eqn_schemeintsect} is smooth if and only if $X$ has no higher triple lines.
\end{proposition}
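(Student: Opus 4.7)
The plan is to analyze $\tilde D_F$ locally and invoke the clean intersection criterion from the introduction: since the ambient $\textup{Bl}_\Delta(Gr(2,n+1)\times Gr(2,n+1))$ is smooth, it suffices to compute at every $p\in\tilde D_F$ the dimension of the tangent-space intersection $T_{\tilde D,p}\cap T_{\textup{Bl}_{\Delta_F}(F\times F),p}$ and compare it with the expected value $3n-9$. I will stratify points of $\tilde D_F$ by whether they lie over the diagonal $\Delta_F\subseteq F\times F$, and extract reducedness and irreducibility from a generic smoothness statement together with the fact that the exceptional stratum $\tilde D_F\cap E_1$ is a proper closed subset of the correct dimension sitting in the closure of the open stratum.

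Away from the exceptional divisor $E_1$ of $\sigma_1$, a point of $\tilde D_F$ corresponds to a pair of distinct lines $(L_1,L_2)$ on $X$ meeting at a point $x$, and $\sigma_1$ is an isomorphism there. Using $T_{F,L_i}=H^0(N_{L_i|X})$, the tangent space to $D\cap F\times F$ at $(L_1,L_2)$ is cut out by the single linear equation $v_1(x)\equiv v_2(x)\pmod{T_xL_1+T_xL_2}$ inside $T_x\mathbb{P}^n$, which is non-trivial because $L_1\neq L_2$. This gives the expected codimension one, so $\tilde D_F$ is smooth and irreducible of dimension $3n-9$ on this open stratum.

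Over the exceptional divisor, a point has the form $(L,[v])$ with $L\in F$ and $[v]\in\mathbb{P}(H^0(N_{L|X}))$. Parametrizing a curve in $D$ specializing to $(L,L)\in\Delta$ and tracking the first-order displacement of its intersection point, I identify the projectivized tangent cone to $D$ at $(L,L)$ as the locus of $[w]$ where $w\in H^0(N_{L|\mathbb{P}^n})$ vanishes somewhere on $L$; this yields the set-theoretic description $\tilde D_F\cap E_1=\{(L,[v]):v\in H^0(N_{L|X})\text{ vanishes on }L\}$. For $L$ of the first type with $N_{L|X}=\mathcal{O}(1)^{n-4}\oplus\mathcal{O}^2$, this fiber is smoothly realized as a $\mathbb{P}^{n-5}$-bundle over $L$, and a direct computation shows the tangent-space intersection has the expected dimension at every $(L,[v])$. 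For $L$ of the second type with $N_{L|X}=\mathcal{O}(1)^{n-3}\oplus\mathcal{O}(-1)$, the fiber is instead the Segre embedding $L\times\mathbb{P}^{n-4}\hookrightarrow\mathbb{P}^{2n-7}$, and computing $T_{\tilde D,(L,[v])}$ requires the second-order Taylor expansion of the defining cubic of $X$ along the codimension-two linear subspace $P^{n-2}$ tangent to $X$ along $L$. This expansion produces a quadratic form on $P^{n-2}$ governing the transversal singularity of $P^{n-2}\cap X$ along $L$, and the tangent-space intersection has the expected dimension at $(L,[v])$ precisely when this form is nondegenerate — i.e., the transversal singularity is $A_1$ rather than $A_2$, which by definition means $L$ is not a higher triple line.

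The main obstacle I anticipate is this second-order computation for second-type lines: it requires coordinates adapted simultaneously to the normal-bundle splitting of $L$, the blow-up $\sigma_1$, and the tangent subspace $P^{n-2}$, and careful bookkeeping of which second-order coefficients of the cubic equation enter the tangent-space equation of $\tilde D$ in the blow-up chart. Once this is done, clean intersection at every $(L,[v])$ with $L$ not a higher triple line yields pointwise smoothness of $\tilde D_F$; the generic smoothness of dimension $3n-9$ together with a local ring argument ruling out embedded components gives reducedness; and irreducibility follows by comparing $\dim(\tilde D_F\cap E_1)=3n-10$ with $\dim(\tilde D_F\setminus E_1)=3n-9$, so the exceptional stratum lies in the closure of the open one.
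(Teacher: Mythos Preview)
Your outline matches the paper's strategy closely: stratify by whether the point lies over the diagonal, use the nested incidence description off the diagonal, and over the diagonal compute the Jacobian using first-order data for lines of the first type and second-order data for lines of the second type. The identification of the failure of the expected rank with the transversal $A_2$ condition (higher triple line) is exactly what the paper proves. Your irreducibility sketch is also in the right spirit, though the paper argues it slightly differently: any extra component would have to meet the main one in a singular point, hence live over the second-type locus on the diagonal, whose dimension $2n-6$ forces its codimension to exceed the Jacobian rank bound $n+5$, a contradiction.

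The genuine gap is reducedness. You write that ``generic smoothness together with a local ring argument ruling out embedded components gives reducedness,'' but you do not say what that argument is, and this is where the real content lies. At points over a higher triple line the tangent-space intersection is strictly larger than expected, so the clean-intersection criterion tells you nothing there; a priori the scheme could carry an embedded component supported on that locus. The paper's mechanism is specific: of the $n+6$ local equations cutting out $\tilde D_F$ in the blow-up chart, $n+4$ of them form a regular sequence (their differentials are independent), so one is reduced to studying a quotient $R'/(f,g)$ of a regular ring by a two-generator ideal. For such quotients one has $\mathrm{pd}_{R'}(R'/(f,g))\le 2$, hence by Auslander--Buchsbaum $\depth\ge\dim R'-2$ after localization. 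To feed this into Serre's $(\mathcal S_1)$ one still needs a bound on the dimension of the singular locus, and this in turn requires the geometric input that higher triple lines form a locus of dimension at most $n-4$ in $F$ (a nontrivial fact, proved by restricting the Gauss map to the swept-out locus of second-type lines). None of these ingredients appears in your proposal, and without them reducedness does not follow. Your off-diagonal tangent computation is also too quick: when both $L_1$ and $L_2$ are of the second type, the restriction maps $H^0(N_{L_i|X})\to N_{L_i|X}(x)$ fail to be surjective, and one must check that the two tangent $(n-2)$-planes are distinct to get the correct dimension.
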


Our method is rather direct — we compute the rank of the Jacobian matrix explicitly. The computation is based on the analysis of the second-order data of the cubic equation near a given line. The difficulty lies in the locus over the lines of the second type on the diagonal. Computation on the Jacobian matrix excludes the possibility of additional component on the exceptional locus and shows irreducibility. For reducedness, we use Serre's criterion and show the depth of a local ring of an affine chart of \eqref{eqn_schemeintsect} is at least one. We reduce the proof to bound the depth of $A/I$, where $A$ is a regular local ring and $I$ is generated by two elements. The geometry of the higher triple lines on $X$ provides an upper bound of dimensions of singular locus, which controls the height of the prime ideal to localize and contributes to the proof of reducedness. 

As a consequence, when $\dim(X)=3$, $\tilde{D}_F$ has codimension one, and thus a Cartier divisor. Blowing up a Cartier divisor gives the isomorphism $\widetilde{H(X)}\cong \Bl_{\Delta_F}(F\times F)$, hence we recover the smoothness of $H(X)$ (cf. Theorem \ref{thm_H(X)3fold}), although $\tilde{D}_F$ is singular when the cubic threefold has a triple line. In $\dim(X)\ge 4$, $\tilde{D}_F$ has higher codimensions, so it contributes to the singularities of $H(X)$ if it is singular. Therefore, Proposition \ref{intro_keyprop} implies Theorem \ref{thm_mainprecise}. The normality of $H(X)$ follows from reducedness of intersection \eqref{eqn_schemeintsect}.


\subsection{Relations to other work}
\subsubsection*{Voisin's map}
When $X$ is a smooth cubic fourfold without a plane, there is a hyperk\"ahler variety $Z$ associated with the Hilbert scheme of twisted cubics \cite{LLSvS}. Voisin \cite[Prop. 4.8]{VoisinMap} constructed a dominant rational map 
\begin{equation}\label{eqn_Voisinmap}
  \psi:F\times F\dashrightarrow Z.  
\end{equation}

In \cite{Chen}, the author proposed a resolution by blowing up the incidental subvariety $D_F$. However, the total space is singular.

Given our smoothness result of the Hilbert scheme of a pair of skew lines (cf. Theorem \ref{thm_main}), it is natural to ask
\begin{question}
    Does the rational map extend to $\widetilde{H(X)}$? Is there a modular interpretation?
\end{question}

\subsubsection*{Singularities of $F_2$}
The Fano variety of lines $F$ has a subvariety $F_2$ parameterizing lines of the second type on $X$ and $\dim(F_2)=\frac12\dim(F)$. When $X$ is a cubic threefold, $F_2$ is a curve, and its singularities correspond to triple lines on $X$ \cite{triplelines}. We may ask
\begin{question}\normalfont\label{Question_F2sing}
  When $\dim(X)\ge 4$, what is the relationship between singularities of $F_2$ and higher triple lines? 
\end{question}

\subsubsection*{Noether-Lefschetz locus}
By Proposition \ref{prop_Intro_general}, the set of cubic fourfolds with a higher triple line $\mathcal{C}_{HTL}$ has codimension at least one in the moduli space $\mathcal{C}$ of cubic fourfolds. So we can say such cubic fourfolds are "special". In contrast, there is an infinite sequence of divisors $\mathcal{C}_d$ of $\mathcal{C}$ parameterized by the discriminant $d$ of the algebraic classes in $H^4$ \cite{Has00}.  Nicolas Addington asked the following question:
\begin{question}\normalfont
   Is $\mathcal{C}_{HTL}$ contained in a Hassett’s Noether-Lefschetz divisor $\mathcal{C}_d$?
\end{question}

Note that $\mathcal{C}_{HTL}$ contains the locus $\mathcal{C}_{E}$ consisting of cubic fourfolds with an Eckardt point (cf. Proposition \ref{prop_Eckardt}), and $\mathcal{C}_{E}$ is contained in $\mathcal{C}_8$ since every such cubic fourfold contains a plane \cite{LPZ18}. In the light of the recent work of Katzarkov, Kontsevich, Pantev, and Yu showing a generic cubic fourfold in $\mathcal{C}_8$ is irrational, one may also ask if a generic member in $\mathcal{C}_{HTL}$ is rational.\\

\noindent\textbf{Acknowledgement}. The author thanks Izzet Coskun for asking him for the higher-dimensional version of Theorem \ref{thm_H(X)3fold} a few years ago, which led to the present paper. I'd like to thank many individuals for discussions on various aspects of this project: David Anderson, Herb Clemens, Laure Flapan, Franco Giovenzana, Luca Giovenzana, Lisa Marquand, Fanjun Meng, Takumi Murayama, Wenbo Niu, and Vaibhav Pandey. Special thanks go to Mircea Mustaţă for his assistance with the reducedness argument, particularly for sharing Lemma \ref{lemma_pdtrick}.\\

\noindent\textbf{Structure of the Paper}.  In Section \ref{sec_highertriplelines}, we review results about lines on a cubic hypersurface and introduce the notion of higher triple lines. In Section \ref{sec_incidental}, we review the classical theory on Grassmannian and a particular Schubert subvariety and desingularization. We also study its intersection with the Fano variety of lines for a cubic hypersurface. In Section \ref{sec_DFtilde}, we study first-order data of the defining equations of \eqref{eqn_schemeintsect} in an affine chart and describe the intersection \eqref{eqn_schemeintsect} set-theoretically. In Section \ref{sec_general}, we compute the rank of the Jacobian matrix of the intersection \eqref{eqn_schemeintsect}, prove the irreducibility of \eqref{eqn_schemeintsect}, and derive conditions when the rank is lower than expected. In Section \ref{sec_reducedness}, we prove the reducedness of \eqref{eqn_schemeintsect}. In Section \ref{sec_app}, we prove the main theorems and derive a few consequences, including the smoothness of $H(X)$ for cubic threefolds (cf. Theorem \ref{thm_H(X)3fold}), the normality of $H(X)$ in all dimensions (cf. Theorem \ref{thm_mainprecise}), and characterize the singularities of the intersection \eqref{eqn_schemeintsect} and the incidence variety. In Section \ref{sec_dimcount}, we prove, using correspondence, that a general cubic hypersurface does not have a higher triple line, leading to the proof of Theorem \ref{thm_main}. In Section \ref{sec_singualarH(X)}, we characterize the singularities of $H(X)$ in terms of the subscheme of type (IV).\\

\noindent\textbf{Notations}. 
\begin{itemize}

    \item $X$, cubic hypersurface in $\mathbb P^n$
    \item $F$, Fano variety of lines of $X$
    \item $H(X)$ and $\widetilde{H(X)}$, Hilbert scheme of a pair of skew lines of $X$ and its branched double cover
    \item $Gr(2,n+1)$, Grassmannian of lines in $\mathbb P^n$
    \item $D$ and $\tilde{D}$, incidental subvariety of $Gr(2,n+1)^2$ and its strict transform 
    \item $D_F$, incidence subvariety of $F\times F$
    \item $\Delta_F$, diagonal of $F\times F$
    \item $\tilde{D}_F$, the scheme theoretical intersection $\textup{Bl}_{\Delta_F}(F\times F)\cap \tilde{D}$ in $\Bl_{\Delta}Gr(2,n+1)^2$
    \item  $V_{n}$, the Segre embedding of $\mathbb P^1\times \mathbb P^{n-2}$
\end{itemize}


\section{Lines on Cubic Hypersurfaces}\label{sec_highertriplelines}
In this section, we will first review the basic results on Fano varieties of lines. One can refer to \cite{AK77} and \cite{Huy} for the details. Then we introduce the notion of higher triple lines and discuss a few properties.

Let $X\subseteq \bbP^{n}$ be a smooth cubic hypersurface with $n\ge 4$. Then the \textit{Fano variety of lines}  $$F=\{L\in Gr(2,n+1)|L\subseteq X\}$$ of $X$ is smooth and has dimension $2n-6$. When $\dim(X)=3$, $F$ is a surface of general type, when $\dim(X)=4$, $F$ is a hyperk\"ahler 4-fold, when $\dim(X)\ge 5$, $F$ is Fano, i.e., anticanonical bundle is ample.

\subsection{Lines of the first and second type}

\begin{definition}\normalfont \label{def_1st2ndtype}
\begin{enumerate}
    \item  A line $L\subseteq X$ is of the first type if $N_{L|X}\cong \mathcal{O}\oplus \mathcal{O}\oplus \mathcal{O}(1)\oplus \cdots \oplus\mathcal{O}(1)$.   
    \item  A line  $L\subseteq X$ is of the second type if $N_{L|X}\cong \mathcal{O}(-1)\oplus \mathcal{O}(1)\oplus \mathcal{O}(1)\oplus \cdots \oplus\mathcal{O}(1)$.
\end{enumerate}
\end{definition}

A general line $L\in F$ is of the first type. Equivalently, the dual map along a line of the first and the second type have different degrees, which can tell them apart.

\begin{lemma}\cite[Lem. 6.7]{CG} \cite[Cha. 2, Cor. 2.6]{Huy}\label{lemma_(n-3)plane}
   Let $L\subseteq X$ be a line on a cubic hypersurface. Then (1) $L$ is of the first type iff is a unique $(n-3)$-plane $P^{n-3}$ tangent to $X$ along $L$.  (2) $L$ is of the second type iff is a unique $(n-2)$-plane $P^{n-2}$ tangent to $X$ along $L$. 
\end{lemma}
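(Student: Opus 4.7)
The plan is to translate both halves of the lemma into a single linear-algebraic invariant of the cubic equation along $L$ and then to match this invariant with the splitting type of $N_{L|X}$. Choose coordinates $x_0,\ldots,x_n$ on $\mathbb{P}^n$ so that $L=\{x_2=\cdots=x_n=0\}$, and expand the defining cubic as
$$f = \sum_{i=2}^{n} q_i(x_0,x_1)\,x_i + (\text{terms of order}\ \ge 2\ \text{in}\ x_2,\ldots,x_n),$$
with $q_i\in H^0(\mathcal{O}_L(2))$. Smoothness of $X$ along $L$ is equivalent to $(q_2,\ldots,q_n)$ having no common zero on $L$, and the restriction of the normal bundle sequence
$$0 \longrightarrow N_{L|X} \longrightarrow \mathcal{O}_L(1)^{\oplus(n-1)} \stackrel{\phi}{\longrightarrow} \mathcal{O}_L(3) \longrightarrow 0, \qquad \phi\colon(v_2,\ldots,v_n)\longmapsto\sum_{i=2}^{n}q_i v_i,$$
identifies $N_{L|X}$ with $\ker\phi$.

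I would next read off the tangent planes from the $q_i$. The projective tangent hyperplane to $X$ at $p=[s:t:0:\ldots:0]\in L$ is cut out by $\sum_{i\ge 2}q_i(s,t)x_i=0$, so writing $q_i=a_i s^2+b_i st+c_i t^2$ the intersection of these hyperplanes over all $p\in L$ is the common vanishing locus of $\sum a_i x_i$, $\sum b_i x_i$, $\sum c_i x_i$, whose codimension in $\mathbb{P}^n$ equals
$$r:=\dim\mathrm{span}(q_2,\ldots,q_n)\subseteq H^0(\mathcal{O}_L(2)).$$
A linear $P^k\supseteq L$ is tangent to $X$ along $L$ iff $P^k$ lies in this intersection, so a $P^k$ of this form exists and is unique precisely when $k=n-r$. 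The lemma therefore reduces to showing $r=3$ iff $L$ is of the first type, $r=2$ iff $L$ is of the second type, and $r\le 1$ never occurs for smooth $X$.

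I plan to carry out this reduction by analyzing $\ker\phi$ after a linear change of $x_2,\ldots,x_n$ that puts the $q_i$ into a normal form. If $r=3$, one can arrange $(q_2,q_3,q_4)=(s^2,st,t^2)$ and $q_i=0$ for $i\ge 5$; the first syzygies of $(s^2,st,t^2)$ are generated by $(t,-s,0)$ and $(0,t,-s)$, giving a pointwise injective map $\mathcal{O}_L^{\oplus 2}\hookrightarrow\ker(\mathcal{O}_L(1)^{\oplus 3}\to\mathcal{O}_L(3))$, hence an isomorphism by matching rank and degree, so $\ker\phi\cong\mathcal{O}_L^{\oplus 2}\oplus\mathcal{O}_L(1)^{\oplus(n-4)}$, matching Definition~\ref{def_1st2ndtype}(1). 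If $r=2$, one can take $q_i=0$ for $i\ge 4$; smoothness forces $q_2,q_3$ to have no common zero on $L$, so $(q_2,q_3)\colon\mathcal{O}_L(1)^{\oplus 2}\twoheadrightarrow\mathcal{O}_L(3)$ is surjective with line-bundle kernel of degree $-1$, giving $\ker\phi\cong\mathcal{O}_L(-1)\oplus\mathcal{O}_L(1)^{\oplus(n-3)}$, matching Definition~\ref{def_1st2ndtype}(2). If $r\le 1$, all $q_i$ would be proportional to a single nonzero quadric on $L$, which necessarily has a zero there, contradicting smoothness.

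The main subtlety lies in the $r=3$ case: the rank-$2$ degree-$0$ piece of $\ker\phi$ could a priori be either $\mathcal{O}_L^{\oplus 2}$ or $\mathcal{O}_L(-1)\oplus\mathcal{O}_L(1)$ (both giving valid normal bundles of the correct total degree), so degree and $h^0$ alone do not pin down the splitting. The clean way to exclude the latter is the explicit syzygy computation above; alternatively, one can compute $h^0(N_{L|X}(-1))$ in the normal form and see it equals $n-4$, which is consistent with the first-type splitting $\mathcal{O}_L(-1)^{\oplus 2}\oplus\mathcal{O}_L^{\oplus(n-4)}$ but not with a second-type splitting ($h^0=n-3$).
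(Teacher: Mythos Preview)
Your argument is correct and essentially complete. The paper does not supply its own proof of this lemma; it simply cites \cite[Lem.~6.7]{CG} and \cite[Cha.~2, Cor.~2.6]{Huy}. Your approach---expand the cubic along $L$, identify the intersection of the tangent hyperplanes with the common zero locus of the linear forms determined by the coefficients of $q_2,\ldots,q_n$, and read off the splitting type of $N_{L|X}=\ker\phi$ from the dimension $r$ of the span of the $q_i$---is precisely the standard argument in those references, so there is nothing to compare.

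One minor remark on the ``subtlety'' you flag in the $r=3$ case: the alternative you mention at the end can be sharpened. A summand $\mathcal{O}_L(1)\hookrightarrow\ker(\mathcal{O}_L(1)^{\oplus 3}\to\mathcal{O}_L(3))$ is the same datum as a nonzero constant vector $(a,b,c)$ with $aq_2+bq_3+cq_4=0$, which is excluded by the hypothesis that $q_2,q_3,q_4$ are linearly independent. So the possibility $\mathcal{O}_L(-1)\oplus\mathcal{O}_L(1)$ is ruled out immediately without computing $h^0$ or writing down the Koszul syzygies, though of course your syzygy computation is perfectly valid and gives the explicit isomorphism.
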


One may interpret Lemma \ref{lemma_(n-3)plane} as saying $X$ is "flatter" around a line of the second type.

Let $L$ be a line of the second type in $X$ with equation $x_2=\cdots=x_{n}=0$. Then, by change of coordinates, we can express the equation of cubic threefold $X$ as \cite[6.10]{CG}

\begin{equation}\label{eqn_cubicnfold@2ndtype}
   F(x_0,\ldots,x_n)=x_2x_0^2+x_3x_1^2+\sum_{2\le i,j\le n}x_ix_jL_{ij}(x_0,x_1)+C(x_2,\cdots,x_n),
\end{equation}
where $L_{ij}(x_0,x_1)=L_{ji}(x_0,x_1)$ is a linear homogeneous polynomial, and $C(x_2,\ldots,x_n)$ is a homogeneous cubic. Then $P^{n-2}$ in Lemma \ref{lemma_(n-3)plane} is given by $x_2=x_3=0$.

\begin{lemma}\cite[Cor. 7.6]{CG} \cite[Cha. 2, Ex. 2.14]{Huy} \label{lemma_2ndtypedim}
    The space of lines of the second type $F_2\subseteq F$ has dimension $\frac{1}{2}\dim (F)=n-3$.
\end{lemma}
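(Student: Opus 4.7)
The plan is to realize $F_2$ as the degeneracy locus of a morphism of vector bundles on $F$ and control its dimension by a Thom--Porteous count together with a tangent-space computation in the explicit normal form.

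Let $\pi\colon \mathcal{L}\to F$ be the universal line, and consider the relative normal sequence whose restriction to each fiber $L$ reads $0\to N_{L|X}\to N_{L|\mathbb{P}^n}=\mathcal{O}_L(1)^{n-1}\to \mathcal{O}_L(3)\to 0$. Twisting by $\mathcal{O}(-1)$ along the fibers of $\pi$ and pushing forward to $F$ yields a four-term sequence $0\to \pi_*N(-1)\to \mathcal{E}\xrightarrow{\Phi}\mathrm{Sym}^2\mathcal{S}^{\vee}\to R^1\pi_*N(-1)\to 0$, in which $\mathcal{E}$ is a rank $n-1$ bundle on $F$ and $\mathrm{Sym}^2\mathcal{S}^{\vee}$ is rank $3$ (here $\mathcal{S}$ denotes the restriction of the tautological subbundle from the ambient Grassmannian). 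Reading off the splittings in Definition \ref{def_1st2ndtype}, one verifies that $\Phi|_L$ has rank $3$ on first-type lines and rank $2$ on second-type lines, so $F_2$ coincides with the rank-$\leq 2$ degeneracy locus of $\Phi$. Thom--Porteous then assigns expected codimension $(n-1-2)(3-2)=n-3$ in $F$, which gives the lower bound $\dim F_2\geq (2n-6)-(n-3)=n-3$ once one knows $F_2\neq\emptyset$ (classical for smooth cubics).

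For the matching upper bound I would work in the explicit chart around a second-type line $L$ given by the normal form \eqref{eqn_cubicnfold@2ndtype}. A first-order deformation of $L$ in $F$ is described by linear forms $a_4(x_0,x_1),\dots,a_n(x_0,x_1)$, so $\dim T_L F=2(n-3)$; expanding the change of coordinates to first order in the deformation parameter and tracking the induced perturbation of the matrix of $\Phi$, the condition that the rank of $\Phi$ not jump back up to $3$ reduces to $n-3$ linear conditions on $(a_4,\dots,a_n)$ whose coefficients are extracted from the quadratic data $L_{ij}(x_0,x_1)$ of the cubic. Hence $\dim T_L F_2\leq n-3$ whenever these $n-3$ conditions are linearly independent, which pins down $\dim F_2=n-3$ generically on each component.

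The main obstacle I anticipate is the independence of those $n-3$ tangential conditions at a general point of every irreducible component of $F_2$. The coefficient matrix can degenerate precisely when there is a non-trivial linear dependence among the forms $L_{ij}$, which is the defining condition for $L$ to be a higher triple line as introduced in the next section. By Proposition \ref{prop_Intro_general} this degenerate locus is proper in $F_2$ for a general cubic, so the generic tangent-space bound gives $\dim F_2=n-3$ immediately. For special smooth $X$ one has to rule out that a whole component of $F_2$ consists of higher triple lines; I would handle this by a separate dimension count on the incidence $\{(L,P^{n-2})\colon L\subseteq P^{n-2},\ P^{n-2}\cap X\text{ singular along }L\}$, using Lemma \ref{lemma_(n-3)plane} to trade the fiber description for a direct bound on the $(n-2)$-planes that can arise.
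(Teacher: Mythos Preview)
The paper does not prove this lemma—it is cited from \cite{CG} and \cite{Huy}—so there is no in-paper argument to compare against. On its own merits: the degeneracy-locus framework is the standard one and is correct. Twisting the normal sequence by $\mathcal{O}_L(-1)$ and pushing forward yields $\Phi\colon\mathcal{E}\to\mathrm{Sym}^2\mathcal{S}^\vee$ of ranks $n-1$ and $3$, and $F_2$ is its rank-$\le 2$ locus, so Thom--Porteous gives the lower bound $\dim F_2\ge n-3$. Your tangent-space analysis at a second-type line is also right: in the normal form \eqref{eqn_cubicnfold@2ndtype} the induced map $T_LF\to\mathrm{Hom}(\ker\Phi_L,\mathrm{coker}\,\Phi_L)$ is the system $A^1\alpha+A^0\beta=0$ in the notation of \eqref{eqn_2nequtions6}, and $[A^1\mid A^0]$ has full row rank precisely when $L$ is not a higher triple line. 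Together with Proposition~\ref{prop_dim-count} (whose proof in Section~\ref{sec_dimcount} is independent of the present lemma) this does settle the case of a general $X$.

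The genuine gap is the upper bound for \emph{every} smooth $X$. The incidence you propose, $\{(L,P^{n-2}):L\subseteq P^{n-2},\ P^{n-2}\cap X\text{ singular along }L\}$, is by Lemma~\ref{lemma_(n-3)plane}(2) in bijection with $F_2$ via the first projection, and the second projection to $Gr(n-1,n+1)$ is finite-to-one (its fiber over $P$ is the set of lines in $\mathrm{Sing}(P\cap X)$, which lies in the curve $\gamma^{-1}(\ell_P)$ for the finite Gauss map $\gamma$); so without an \emph{independent} bound on the image in $Gr(n-1,n+1)$ you recover no new information about $\dim F_2$. Nor can you invoke Proposition~\ref{prop_HigherTripleLineinF2} or Corollary~\ref{cor_htl-dim} to exclude a component of higher triple lines, since their proofs take $\dim\!\big(\bigcup_{t\in C}L_t\big)=n-2$ as input, which already presupposes $\dim C=n-3$. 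To close the argument for arbitrary smooth $X$ you need a dimension bound that does not feed back through $F_2$ itself; the original sources supply this, and you should consult \cite[\S7]{CG} or \cite[Ch.~2]{Huy} directly for a uniform argument.
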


$F_2$ is smooth for general $X$ \cite[Prop. 2.13]{Huy}, but it may be singular for special $X$.

\begin{definition}\label{def_tripleline}
    A line $L$ of $X$ is called a triple line if there is a plane $P^2$ containing $L$, such that either $X\cap P^2=3L$, or $P^2\subseteq X$ \footnote{Here, we allow $P^2$ to be contained in $X$ for convenience. This agrees with the existing definition in \cite{CG} and \cite{GK} because neither a smooth cubic threefold nor a general cubic fourfold contains a plane.}.
\end{definition}

For cubic threefolds, $F_2$ is a curve in $F$, and triple lines are precisely the singularities of $F_2$ \cite{triplelines}. In particular, a triple line is of the second type, and there are at most finitely many triple lines \cite[Lem. 10.15]{CG}. In addition, a general cubic threefold does not have a triple line. 

However, the properties of the triple line are not parallel in higher dimensions: for cubic fourfold, a triple line could be of the first type and varies in a two-parameter family, even for the general ones \cite{GK}. We instead study a variant notion whose properties are shared for cubic hypersurfaces in all dimensions. It naturally appears when finding the singularities of the Hilbert scheme of a pair of skew lines $H(X)$, which will be explored in Section \ref{sec_general}.

\subsection{Higher triple lines}

\begin{definition}\normalfont\label{def_htL}
    We call a line $L$ on a smooth cubic hypersurface $X$ a \textit{higher triple line} if 
    \begin{itemize}
        \item $L$ is of the second type, and
        \item in terms of the coordinates \eqref{eqn_cubicnfold@2ndtype}, the columns of the matrix of linear forms \begin{equation}
S:=\begin{bmatrix}\label{eqn_Lmatrix}
    L_{44}(x_0,x_1) & L_{45}(x_0,x_1) &\cdots& L_{4n}(x_0,x_1)\\
    L_{54}(x_0,x_1) & L_{55}(x_0,x_1) &\cdots& L_{5n}(x_0,x_1)\\
    \vdots & \vdots &\cdots& \vdots\\
    L_{n4}(x_0,x_1) & L_{n5}(x_0,x_1) &\cdots& L_{nn}(x_0,x_1)
\end{bmatrix} 
\end{equation}
are linearly dependent over $\C$. 
   \end{itemize}
We call the matrix of linear forms $S$ \textit{degenerates} if the above condition is satisfied.
\end{definition}

Since first-order data vanish in $P^{n-2}$ defined by $x_2=x_3=0$, the above definition is independent of the choice of coordinate.


 To understand the difference between the two notions, suppose that $L$ is a line of the second type. Then $L$ being a triple line is equivalent to say the symmetric bilinear form associated with the matrix \eqref{eqn_Lmatrix} having a null vector, i.e., there is a non-zero vector $v$ such that $v^TSv=0$. Note that the vector $v$ corresponds to a point $p_v$ in $\mathbb P^n$ with $x_0=\cdots =x_3=0$, and the span of $L$ and $p_v$ is a plane $P^2$ such that $P^2\cap X=3L$ if it is not contained in $X$. On the other hand, $L$ being a higher triple line implies that $S v=0$ and hence $v^TSv=0$ for some non-zero $v$, so we observe that
 \begin{proposition}
    A higher triple line is a triple line.
\end{proposition}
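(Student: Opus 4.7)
The plan is to observe that the kernel condition $Sv = 0$ defining a higher triple line is manifestly stronger than the null-vector condition $v^T S v = 0$ characterizing triple lines (as explained in the paragraph preceding the proposition), and then to produce the witnessing plane $P^2$ explicitly via the normal form \eqref{eqn_cubicnfold@2ndtype}. The linear-algebra step is one line: if $L$ is a higher triple line with nonzero $v = (v_4, \ldots, v_n)$ such that $Sv = 0$ (where $Sv$ is a column of linear forms in $x_0, x_1$, all identically zero), then pairing with $v^T$ on the left yields $v^T S v = v^T(Sv) = 0$ as a linear form in $\mathbb{C}[x_0, x_1]$.

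For the geometric step, I would set $p_v = [0:0:0:0:v_4:\cdots:v_n] \in \mathbb{P}^n$ and $P^2 := \mathrm{Span}(L, p_v)$, parameterized by $[y_0:y_1:y_2] \mapsto (y_0, y_1, 0, 0, y_2 v_4, \ldots, y_2 v_n)$. Substituting into \eqref{eqn_cubicnfold@2ndtype}, the $x_2 x_0^2$ and $x_3 x_1^2$ pieces drop out (since $x_2 = x_3 = 0$ on the image), the quadratic double sum collapses to $y_2^2 \cdot (v^T S v)(y_0, y_1)$ which vanishes by the first step, and the pure cubic contributes $y_2^3 \, C(0, 0, v_4, \ldots, v_n)$. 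Hence $F|_{P^2} = y_2^3 \cdot C(0, 0, v)$, and since $L \subseteq P^2$ is cut out by $\{y_2 = 0\}$, we conclude either $P^2 \cap X = 3L$ (when $C(0,0,v) \neq 0$) or $P^2 \subseteq X$ (when $C(0,0,v) = 0$), so Definition \ref{def_tripleline} is satisfied in both cases.

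There is no substantive obstacle here — the essential content is the elementary remark that a kernel vector of $S$ is in particular a null vector of the associated symmetric form. The only item worth checking carefully is that, after the substitution above, the surviving terms are exactly $y_2^2 (v^T S v)(y_0, y_1)$ and $y_2^3 C(0,0,v)$, which is purely mechanical given the normal form \eqref{eqn_cubicnfold@2ndtype}. It is also worth noting that the coordinate-free formulation of Definition \ref{def_htL} (independence of the choice of coordinates realizing \eqref{eqn_cubicnfold@2ndtype}) ensures that the conclusion does not depend on the ambiguity in presenting the line of the second type in that normal form.
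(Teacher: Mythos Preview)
Your proposal is correct and follows essentially the same approach as the paper: the paper's argument, given in the paragraph immediately preceding the proposition, is exactly the observation that $Sv=0$ implies $v^TSv=0$, together with the identification of the plane $P^2=\mathrm{Span}(L,p_v)$ witnessing the triple-line condition. Your write-up simply makes explicit the substitution into the normal form \eqref{eqn_cubicnfold@2ndtype} that the paper leaves implicit, so there is no substantive difference.
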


For cubic threefolds, the matrix \eqref{eqn_Lmatrix} is 1-by-1, and being a higher triple line is exactly the same as a triple line, and equivalent to vanishing of $L_{44}(x_0,x_1)$. Starting from cubic fourfolds, being a higher triple line is a more restrictive condition: the existence of a nonzero $v=[v_1,v_2]^T$ such that $v^TSv=0$ does not necessarily imply that the matrix $S$ degenerates: For example, the bilinear form associated with 2-by-2 matrix $$\begin{bmatrix}
    0&L(x_0,x_1)\\L(x_0,x_1)&0
\end{bmatrix}$$ has two null vectors $[1,0]^T$ and $[0,1]^T$, but the matrix itself can be nondegenerate.



To give a geometrical interpretation, if a line $L$ in $X$ is of the second type, then the intersection $X\cap P^{n-2}$ is singular along the line and has $A_1$ singularity in the tranversal direction at a general point on $L$. If $L$ is further a higher triple line, then $X\cap P^{n-2}$ has $A_2$ singularity in the transversal direction along the entire line $L$. 

\begin{figure}[ht]
\centering
\includegraphics[width=0.25\textwidth]{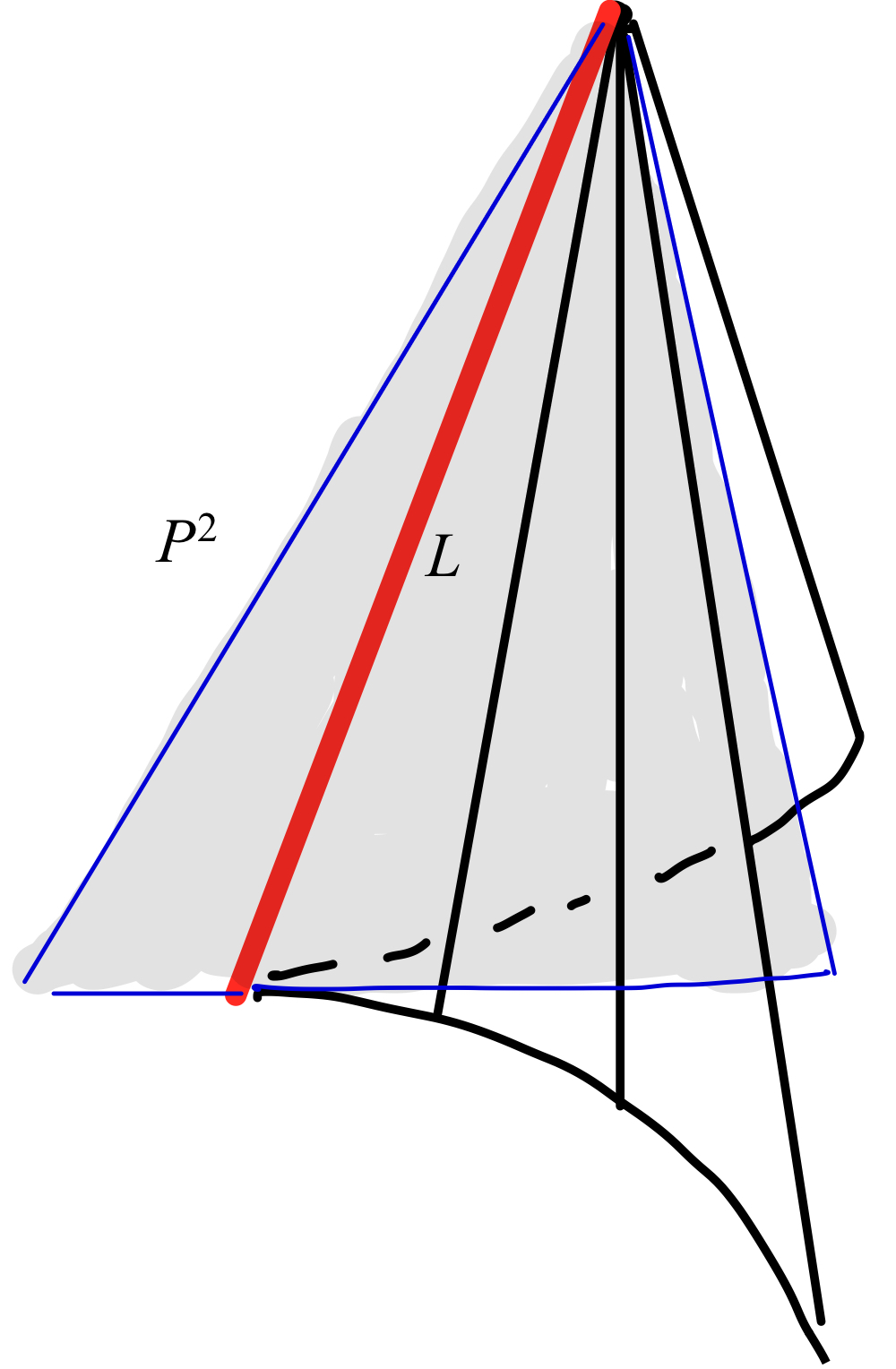}
\caption{\label{figure_coneofcusp} $\mathbb P^3$-section of cubic fourfold along a higher triple line}
\end{figure}

For example, a cubic fourfold $X$ with a higher triple line $L$ implies there is a linear $P^3\cong \mathbb P^3$ section of $X$ being a cone over a cuspidal curve.\footnote{When $\dim(X)\ge 5$, $X\cap P^{n-2}$ is not necessarily a cone.} Here, the plane $P^2$ realizing  $P^2\cap X=3L$ is the cone of the "horizontal line" meeting the cuspidal curve at 3 times of the cusp point. In fact, this plane $P^2$ corresponds to a unique vector $v$ up to scaling such that $Sv=0$. In Section \ref{sec_singualarH(X)}, we show how the plane $P^2$ and $P^3$ play a role in describing the singularities of $H(X)$.

We have the following dimension count: 

\begin{proposition}\label{prop_dim-count} (cf. Proposition \ref{prop_generalsmoothdiagonal})
  Having a higher triple line is a condition of codimension at least one in the space of all smooth cubic hypersurfaces. In particular, a general cubic hypersurface has no higher triple line.
\end{proposition}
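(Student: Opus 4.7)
The plan is to bound the locus of cubics admitting a higher triple line by means of an incidence correspondence. Setting $N := \binom{n+3}{3} - 1$, I introduce
\[
\mathcal{I}_{HTL} := \{(X, L) \in \mathbb{P}^N \times Gr(2, n+1) : L \text{ is a higher triple line on } X\},
\]
with projections $\pi$ to $Gr(2,n+1)$ and $\rho$ to $\mathbb{P}^N$. Since $PGL_{n+1}$ acts transitively on $Gr(2,n+1)$ and diagonally on $\mathcal{I}_{HTL}$, the fibers of $\pi$ are equidimensional, so it suffices to compute one fiber, which I take over $L = V(x_2,\ldots,x_n)$. Once I bound $\dim \mathcal{I}_{HTL} < N$, the non-surjectivity of $\rho$ gives the asserted codimension, and restricting to the open dense locus of smooth cubics preserves it.

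For this fixed $L$ I expand $F$ in the variables transverse to $L$ as
\[
F = F_0(x_0,x_1) + \sum_{i=2}^{n} x_i F_i^{(1)}(x_0,x_1) + \sum_{2 \leq i \leq j \leq n} x_i x_j F_{ij}^{(0)}(x_0,x_1) + C(x_2,\ldots,x_n),
\]
and impose the three defining conditions in succession. The inclusion $L \subset X$ is the linear vanishing $F_0 = 0$, contributing codimension $4$. The second-type condition is the rank drop from $3$ down to $2$ of the $(n-1) \times 3$ matrix of coefficients of the quadratic forms $F_i^{(1)}$, a standard determinantal locus of codimension $(n-1-2)(3-2) = n-3$, consistent with Lemma \ref{lemma_2ndtypedim}.

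The main obstacle is the third codimension. I pass to the normal form \eqref{eqn_cubicnfold@2ndtype} using the parabolic stabilizer $\textup{Stab}(L) \subset GL_{n+1}$, and write the relevant submatrix as $S = x_0 A + x_1 B$ with $A, B$ symmetric $(n-3) \times (n-3)$ complex matrices. By Definition \ref{def_htL}, $L$ is a higher triple line precisely when the columns of $S$ admit a nontrivial $\mathbb{C}$-linear relation, i.e.\ $\ker A \cap \ker B \neq 0$. I then stratify by the common null vector: for each $v \in \mathbb{P}^{n-4}$, changing basis so $v = e_{n-3}$, the equations $Av = 0$ and $Bv = 0$ each force the last row and column of the respective matrix to vanish, leaving an $(n-4) \times (n-4)$ symmetric block of dimension $\binom{n-3}{2}$. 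Consequently the HTL locus has dimension $(n-4) + 2\binom{n-3}{2} = (n-4)(n-2)$ inside the $(n-3)(n-2)$-dimensional space of pairs, so its codimension there equals $n-2$. Because the normal form is a slice for $\textup{Stab}(L)$ acting on the second-type stratum, this equals the codimension of the HTL condition inside the second-type subvariety of $\pi^{-1}(L)$.

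Summing the three contributions gives codimension $4 + (n-3) + (n-2) = 2n-1$, and therefore
\[
\dim \mathcal{I}_{HTL} = 2(n-1) + \bigl(N - (2n-1)\bigr) = N - 1 < N,
\]
so $\rho$ cannot be dominant and its image is a proper closed subvariety of codimension at least one in $\mathbb{P}^N$. Intersecting with the open dense smooth locus yields the proposition. The most delicate step is the codimension-$(n-2)$ computation: because $A$ and $B$ are symmetric, the degeneracy is not the naive rank drop of the concatenation $[A \mid B]$, so the explicit stratification by the common null vector $v$ is what makes the dimension count work.
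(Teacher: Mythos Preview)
Your argument is correct and reaches the same bound $2n-1$, but by a genuinely different route than the paper's Section~\ref{sec_dimcount}. The paper runs the incidence over $Gr(n-1,n+1)$, the codimension-two linear subspaces $P^{n-2}\subset\mathbb P^n$: Lemma~\ref{lemma_transversalA2count} first bounds, inside $\mathbb P(\Sym^3\C^{n-1})$, the locus of cubic $(n-3)$-folds with transversal $A_2$ along some line by codimension $\ge 2n-1$, decomposing as $4+3(n-3)+(n-2)-2(n-3)$ and extracting the last $n-2$ conditions from the observation (Lemma~\ref{lemma_Sdet=0}) that degeneracy of $S$ forces $\det(S)=0$ as a degree-$(n-3)$ binary form. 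The restriction map $(X,P^{n-2})\mapsto X\cap P^{n-2}$ together with $\dim Gr(n-1,n+1)=2(n-1)$ then gives the final codimension-one estimate. You instead project to $Gr(2,n+1)$ and decompose the fiber as $4+(n-3)+(n-2)$, obtaining the HTL contribution $n-2$ by stratifying the space of symmetric pairs $(A,B)$ by the common null vector rather than via the determinant. Your stratification is more elementary (it avoids Lemma~\ref{lemma_Sdet=0}) and in fact pins down the exact codimension of the degenerate-$S$ locus; the paper's route has the virtue of packaging everything through the geometric picture of transversal $A_2$ sections, which it reuses elsewhere. One point to tighten: the sentence ``the normal form is a slice for $\textup{Stab}(L)$'' is loose, since \eqref{eqn_cubicnfold@2ndtype} depends on further choices (the tangent $P^{n-2}$, the diagonalization of the linear part) beyond fixing $L$. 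A clean repair is to enlarge your incidence to carry the data $(P^{n-2},v)$ as well and count the \emph{linear} conditions on $X$ for fixed $(L,P^{n-2},v)$---namely $4+3(n-3)+2(n-3)=5n-11$---against the $3n-10$ parameters for $(P^{n-2},v)$; this recovers $2n-1$ without any slice claim.
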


The idea is that, for example, when $\dim(X)=4$, the spaces of all cones over cuspidal curves form a codimension 8 subspace of the spaces of cubic surfaces $\mathbb P(Sym^3\C^4)$, while the dimension of the space $Gr(4,6)$ of $\mathbb P^3$ in $\mathbb P^5$ is $8$ and is smaller than $9$. Hence, by incidental correspondence, a general cubic fourfold does not admit a higher triple line. A complete proof will be given in Section \ref{sec_dimcount}. 


For special smooth cubic hypersurfaces, we give an upper bound of the dimension of higher triple lines. This generalizes the result on triple lines on cubic threefolds \cite[Lem. 10.15]{CG}.

\begin{proposition}\label{prop_HigherTripleLineinF2}
    Let $X$ be a smooth cubic hypersurface of $\mathbb P^n$ with $n\ge 4$. Then, a general line of the second type (on each irreducible component of $F_2$) is not a higher triple line.
\end{proposition}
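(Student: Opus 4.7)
The plan is to argue by contradiction. Let $F_2^0$ be an irreducible component of $F_2$, and suppose every line $L\in F_2^0$ is a higher triple line. Since degeneracy of the matrix $S$ from \eqref{eqn_Lmatrix} is a closed condition on $F_2^0$, the hypothesis says that the map $L\mapsto S_L$ lies everywhere in the degeneracy locus of matrices of linear forms. At a general smooth point $L_0\in F_2^0$, I would choose coordinates adapted to $L_0$ as in \eqref{eqn_cubicnfold@2ndtype} and, after possibly passing to an \'etale cover, extract a continuously varying null vector $v_L\in\ker(S_L)$ on a neighborhood of $L_0$. This produces a canonical morphism
\[
\phi:F_2^0\dashrightarrow \Gr(3,n+1),\qquad L\longmapsto P^2_L:=\mathrm{Span}(L,v_L),
\]
where $P^2_L$ is the unique plane realizing $P^2_L\cap X=3L$ (or $P^2_L\subseteq X$). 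Since $L$ is recovered as the support of this triple intersection, $\phi$ is injective on its domain, so $\dim\phi(F_2^0)=n-3$.

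The main computation is to differentiate the relation $S_L\cdot v_L=0$ along tangent vectors in $T_{L_0}F_2^0\subseteq T_{L_0}F=H^0(N_{L_0|X})$, using the standard description of $T_{L_0}F_2$ as the subspace in $H^0(N_{L_0|X})$ cut out by the second-fundamental-form vanishing along $L_0$ (cf.\ \cite[Cha.~2]{Huy}). This differentiation imposes a system of linear conditions on the higher-order Taylor coefficients of the cubic equation $F$ near $L_0$, coming from the quadric terms $x_0Q_0+x_1Q_1$ and the residual cubic $C$ in \eqref{eqn_cubicnfold@2ndtype}. Keeping track of how the coordinates $x_2,\ldots,x_n$ transform as $L$ moves in $F_2^0$, the persistence of the null vector $v_L$ translates into a Taylor-coefficient identity at $L_0$ of order precisely matching the degeneracy-locus codimension established in Section~\ref{sec_general}.

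The hard part is to show these identities cannot all hold for a smooth $X$. I would reduce to a Jacobian computation: if $v_L$ persists as a null vector along every deformation direction in $T_{L_0}F_2^0$, then direct evaluation shows all partial derivatives of $F$ vanish at the point $p_0=[0{:}0{:}0{:}0{:}v_{0,4}{:}\cdots{:}v_{0,n}]$ lying on the triple plane $P^2_{L_0}$, producing a singular point of $X$ on that plane and contradicting smoothness. An alternative route, parallel to the incidence-correspondence argument in the proof of Proposition~\ref{prop_dim-count}, is to observe that an $(n-3)$-dimensional family of triple planes $\phi(F_2^0)$ forces $X$ either to contain a positive-dimensional family of $2$-planes or to acquire a non-isolated singularity along $L_0$; both possibilities are ruled out by the smoothness of $X$. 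Either approach yields the desired contradiction and forces the strict inclusion of the higher-triple locus in $F_2^0$.
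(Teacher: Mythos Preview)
Your proposal has a genuine gap at precisely the point you flag as the ``hard part.'' You assert that if the null vector $v_L$ persists along every direction of $T_{L_0}F_2^0$, then \emph{all} partial derivatives of $F$ vanish at $p_0=[0{:}0{:}0{:}0{:}v_4{:}\cdots{:}v_n]$. But evaluating $\partial F/\partial x_k$ at $p_0$ for $k\ge 2$ gives $\partial C/\partial x_k$ evaluated at $(0,0,v_4,\ldots,v_n)$, where $C$ is the cubic term in \eqref{eqn_cubicnfold@2ndtype}. Differentiating $S_Lv_L=0$ along $F_2^0$ yields $(dS_L)v_L+S_L(dv_L)=0$, which constrains the variation of the \emph{quadratic} coefficients $L_{ij}$; it says nothing about the cubic $C$. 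So you have not shown $\nabla C(p_0)=0$, and there is no reason to expect it. Your alternative route is equally unsubstantiated: an $(n-3)$-dimensional family of triple planes does not obviously force $X$ to contain a plane or to be singular, and you give no mechanism for this. (Also a minor point: when $P^2_L\subseteq X$ your injectivity claim for $\phi$ fails, since then $L$ is not recovered as the support of the intersection.)

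The paper's proof proceeds by a completely different and much shorter route that bypasses the need to differentiate $S_Lv_L=0$. It sweeps out the $(n-2)$-dimensional variety $W=\bigcup_{t\in C}L_t\subseteq X$ and uses that the Gauss map $\mathcal{D}:X\to X^\vee$ is finite (smoothness of $X$), so $\mathcal{D}|_W$ has maximal rank at a general smooth point $p_0\in W$. At such a point one identifies $T_{p_0}W$ with the linear space $\{x_2=x_3=0\}$ in the normal form \eqref{eqn_cubicnfold@2ndtype}. The higher-triple-line condition then says exactly that the relevant $(n+1)\times(n-1)$ submatrix of the Hessian of $F$ is degenerate at $p_0$, i.e.\ the differential of $\mathcal{D}|_W$ drops rank there---a contradiction. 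The finiteness of the Gauss map is the missing global input that your local differentiation cannot supply.
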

\begin{proof}
    We adapt the proof of \cite[Lem. 10.15]{CG} to higher dimensions. Let $C$ be an irreducible component of $F_2$, then as a consequence of \cite[Lem. 7.5]{CG} $W=\bigcup_{t\in C}L_t$ is a subvariety of dimension $n-2$ in $X$. A general point $p_0$ of $W$ is smooth. Since the dual map $\mathcal{D}:X\to X^{\vee}$ is finite, its restriction $\mathcal{D}|_W$ to $W$ must have maximal rank at $p_0$.

  Choose $t_0\in F$ such that $p_0\in L_{t_0}$. The tangent direction $T_{t_0}C$ corresponds to a tangent direction $v\in T_{p_0}W$ normal to $L_0$. Hence, if we choose normal coordinates with respect to the line $L_{t_0}$ so that $X$ has equation \eqref{eqn_cubicnfold@2ndtype}, the codimension-two linear subspace $x_2=x_3=0$ is exactly the tangent space $T_{p_0}W$.

   By assumption that $L_{t_0}$ is a higher triple line, then the $(n+1)\times (n-1)$ submatrix of the Hessian matrix 
\begin{equation}\label{eqn_sub-Hessian}
    \frac{\partial^2F}{\partial x_i\partial x_0}, \frac{\partial^2F}{\partial x_i\partial x_1}, \frac{\partial^2F}{\partial x_i\partial x_4},\cdots, \frac{\partial^2F}{\partial x_i\partial x_n}, i=0,1,\ldots n.
\end{equation}
is degenerate at $p_0$. This shows that the restriction of the dual map $\mathcal{D}|_W$ to $W$ is not of maximal rank at the smooth point $p_0$. This is a contradiction.
\end{proof}

\begin{corollary} \label{cor_htl-dim}
    The dimension of the locus of the higher triple lines is at most $$\dim(F_2)-1=n-4.$$
\end{corollary}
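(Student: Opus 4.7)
The plan is to deduce this immediately from Proposition \ref{prop_HigherTripleLineinF2} together with the fact that $\dim(F_2)=n-3$ from Lemma \ref{lemma_2ndtypedim}. By Definition \ref{def_htL}, every higher triple line is, in particular, a line of the second type, so the higher triple line locus $\mathrm{HTL}$ is set-theoretically contained in $F_2$.

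First I would argue that $\mathrm{HTL}$ is closed in $F_2$. The condition that the matrix $S$ in \eqref{eqn_Lmatrix} has linearly dependent columns is the vanishing of the maximal minors of $S$, which are polynomials in the coefficients of $F$ written in normal form around a varying line. Since normal forms can be chosen locally in a Zariski-open neighborhood of any $L\in F_2$, this gives $\mathrm{HTL}$ as a closed subscheme of $F_2$.

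Next I would combine this with Proposition \ref{prop_HigherTripleLineinF2}: on each irreducible component $C$ of $F_2$, the general member is not a higher triple line, so $\mathrm{HTL}\cap C$ is a proper closed subset of the irreducible variety $C$. Consequently $\dim(\mathrm{HTL}\cap C)<\dim(C)=\dim(F_2)=n-3$, which yields $\dim(\mathrm{HTL})\le n-4$ as claimed.

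The argument is essentially immediate from what precedes; the only thing worth emphasizing is the closedness statement, which is not entirely formal because the normal form \eqref{eqn_cubicnfold@2ndtype} depends on a choice of coordinates adapted to each line, so I would just note that the degeneracy of $S$ is intrinsic (independent of the coordinate choice, as already observed after Definition \ref{def_htL}) and hence defines a well-defined closed subscheme of $F_2$.
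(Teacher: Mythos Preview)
Your proposal is correct and follows exactly the argument the paper intends: the corollary is stated with no proof, immediately after Proposition~\ref{prop_HigherTripleLineinF2}, so the intended reasoning is precisely the one you spell out---higher triple lines lie in $F_2$ by definition, and on each irreducible component of $F_2$ they form a proper closed subset, hence have dimension at most $\dim(F_2)-1=n-4$. Your added remark on closedness is a reasonable clarification (and indeed the degeneracy of $S$ is coordinate-independent, as noted after Definition~\ref{def_htL}), though strictly speaking the phrase ``general line'' in Proposition~\ref{prop_HigherTripleLineinF2} already guarantees a dense open of non-higher-triple lines in each component, which suffices for the dimension drop.
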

 We will show that this bound is sharp in the case below.

\subsection{Eckardt points}

\begin{definition}\normalfont
    A point $p\in X$ is called an \textit{Eckardt point} if the tangent hyperplane at $p$ intersects $X$ at $p$ with multiplicity 3.
\end{definition}
Equivalently, $p$ is an Eckardt point if $T_pX\cap X$ is a cone over a smooth cubic hypersurface $Y$ with $\dim(Y)=\dim(X)-2$ and the cone point being the Eckardt point $p$. 

There are at most finitely many Eckardt points on a smooth cubic hypersurface \cite[Cor. 6.3.4]{Eckardt-thesis}. As an example, the Fermat cubic $(n-1)$-fold contains $\frac{3n(n+1)}{2}$ Eckardt points and realizes the upper bound \cite[Thm. 3.12]{CCS}. A general cubic hypersurface has no Eckardt point. 

For cubic threefold, each Eckardt point is associated with nine triple lines - the tangent hyperplane section is a cone of a cubic curve $E$. The nine flex points of the cubic curve $E$ correspond to the nine triple lines on $X$. This can be generalized to higher dimensions, except that higher triple lines can vary in a continuous family. We denote $Hess(Y)$ the Hessian hypersurface associated with hypersurface $Y$ defined by $G=0$. It is defined by $\det(\frac{\partial^2G}{\partial x_i\partial x_j})=0$ \cite{BFP-Hessian}.

\begin{proposition}\label{prop_Eckardt}
    Let $p$ be an Eckardt point on $X$, then the higher triple lines on $X$ passing through $p$ is parameterized by the intersection $Y\cap Hess(Y)$ of $Y$ and its associated Hessian hypersurface. In particular, there is a $(n-4)$-dimensional family of higher triple lines on $X$ through $p$.
\end{proposition}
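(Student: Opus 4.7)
The plan is to set up an explicit bijection between higher triple lines through $p$ and points of $Y \cap Hess(Y)$, and then read off the dimension from the fact that $Y$ and $Hess(Y)$ are both hypersurfaces in $\mathbb{P}^{n-2}$.

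First, I would pin down a normal form for $X$ near $p$. Choosing coordinates so that $p = [1{:}0{:}\cdots{:}0]$ and $T_pX = \{x_n = 0\}$, the Eckardt condition forces
\[ F = x_0^2 x_n + x_0 x_n B(x_1,\ldots,x_n) + a_3(x_1,\ldots,x_n) \]
with $B$ linear and $a_3$ cubic, and $Y = \{\tilde a_3 = 0\} \subset \mathbb{P}^{n-2}$ where $\tilde a_3 := a_3(x_1,\ldots,x_{n-1},0)$. Lines $L \subset X$ through $p$ are then in bijection with points $y \in Y$ via $L_y := \overline{py}$.

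Next, I fix $y \in Y$ and, after relabeling, take $y = [0{:}\cdots{:}0{:}1{:}0]$ and $L_y = \{x_1 = \cdots = x_{n-2} = x_n = 0\}$. Computing the tangent hyperplanes at points $q \in L_y$ yields
\[ \bigcap_{q \in L_y} T_qX = \{x_n = 0,\ \ell = 0\},\qquad \ell = \sum_{i=1}^{n-2}\alpha_i x_i,\ \alpha_i = \tfrac{1}{2}\,\partial^2\tilde a_3/\partial x_i \partial x_{n-1}(y). \]
Since $Y$ is smooth at $y$, $\nabla \tilde a_3(y) \neq 0$ and hence $\ell \not\equiv 0$; the intersection is a genuine $P^{n-2}$ tangent to $X$ along $L_y$, and in particular every line through the Eckardt point $p$ is of the second type. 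Assuming without loss of generality that $\alpha_1 \neq 0$ and substituting $x_n = 0$, $x_1 = -\sum_{i \geq 2}(\alpha_i/\alpha_1) x_i$ into $\tilde a_3$ realizes $X \cap P^{n-2}$ as the zero locus of a cubic $\bar a_3(x_2,\ldots,x_{n-1})$, which is the restriction of $\tilde a_3$ to the tangent hyperplane $T_yY = \{\ell = 0\}$. Since $\bar a_3$ is independent of $x_0$, $X \cap P^{n-2}$ is a cone with axis $L_y$ over the base cubic $V = \{\bar a_3 = 0\} \subset \mathbb{P}^{n-3}$.

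After a further coordinate change putting $F$ into the form \eqref{eqn_cubicnfold@2ndtype}, a direct calculation shows that the matrix $S$ of Definition~\ref{def_htL} degenerates precisely when the affine Hessian $H^{\bar a_3}(q_V)$ at $q_V := [0{:}\cdots{:}0{:}1]$ is singular---equivalently, when the cone $X \cap P^{n-2}$ has transversal singularity worse than $A_1$ along $L_y$. Because $\bar a_3 = \tilde a_3|_{T_yY}$, Euler's identity $H^{\tilde a_3}(y)\cdot y = 2\nabla \tilde a_3(y)$ together with a standard kernel argument (using $\nabla \tilde a_3(y) \neq 0$) yields the classical equivalence $\det H^{\tilde a_3}(y) = 0$ iff $H^{\tilde a_3}(y)|_{T_yY}$ is degenerate iff $H^{\bar a_3}(q_V)$ is degenerate. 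Thus $L_y$ is a higher triple line precisely when $y \in Hess(Y)$.

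Finally, since $Y$ is a smooth cubic hypersurface in $\mathbb{P}^{n-2}$ its Gauss map is generically finite, so $Y \not\subset Hess(Y)$; hence $Y \cap Hess(Y)$ has the expected codimension $2$ in $\mathbb{P}^{n-2}$, i.e., dimension $n - 4$. I expect the main obstacle to be the identification in the third paragraph---matching the degeneracy of the matrix $S$ (of linear forms in the line coordinates $x_0, x_1$) with a single scalar Hessian being singular---which hinges on the cone structure of $X \cap P^{n-2}$ along $L_y$, so that the transversal Hessian is constant along $L_y$ up to scaling.
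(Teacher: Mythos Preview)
Your approach is the same as the paper's: exploit the cone structure of $T_pX\cap X$ over $Y$, observe that every ruling line is of the second type, and identify the higher triple line condition with the $A_2$ condition on $T_yY\cap Y$, which is governed by the Hessian of $Y$. The paper states these facts tersely; you supply the computations.

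There is one imprecision in your chain of equivalences that you should repair. You write
\[
\det H^{\tilde a_3}(y)=0 \iff H^{\tilde a_3}(y)\big|_{T_yY}\ \text{degenerate} \iff H^{\bar a_3}(q_V)\ \text{degenerate},
\]
but the last two conditions are \emph{always} satisfied: since $y\in T_yY$ (Euler gives $\nabla\tilde a_3(y)\cdot y=3\tilde a_3(y)=0$) and $H^{\tilde a_3}(y)\,y=2\nabla\tilde a_3(y)\in (T_yY)^{\perp}$, the vector $y$ lies in the radical of $H^{\tilde a_3}(y)|_{T_yY}$. Equivalently, $q_V$ is a singular point of $V=\{\bar a_3=0\}$, so the row and column of $H^{\bar a_3}(q_V)$ indexed by the $q_V$-direction vanish. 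What you actually need is the form on the quotient $T_yY/\langle y\rangle$ (the second fundamental form of $Y$ at $y$); this is exactly your $(n-3)\times(n-3)$ block $A^1$, and the correct chain is
\[
S\ \text{degenerates}\iff A^1\ \text{singular}\iff H^{\tilde a_3}(y)\big|_{T_yY/\langle y\rangle}\ \text{degenerate}\iff \det H^{\tilde a_3}(y)=0.
\]
The last equivalence follows by your Euler argument: if $H^{\tilde a_3}(y)w=0$ then $w\in T_yY$ automatically (pair against $y$), and $w\notin\langle y\rangle$ since $H^{\tilde a_3}(y)y\neq 0$; conversely any $v\in T_yY\setminus\langle y\rangle$ with $H^{\tilde a_3}(y)v\in\langle\nabla\tilde a_3(y)\rangle$ yields a kernel vector $v-\tfrac{c}{2}y$. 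Your identification $A^0=0$ via the cone structure is correct and is indeed the crux, for the reason you flag: the transversal Hessian is constant along $L_y$ because $\bar a_3$ does not involve the cone-vertex coordinate $x_0$.
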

\begin{proof}
    Let $p$ be an Eckardt point, then $T_pX\cap X$ is a cone of a smooth cubic hypersurface $Y$ with the cone point $p$, and satisfies $\dim(Y)=\dim(X)-2$. Then, every line in the ruling is a line of the second type in $X$. Moreover, if point $q\in Y$ such that the tangent hyperplane section $T_qY\cap Y$ has an $A_2$ singularity at $q$, then the line $l_{pq}$ spanned by $q$ and the cone point $p$ is a higher triple line on $X$. Such locus on $Y$ is parameterized by the intersection $Y\cap Hess(Y)$ with the Hessian hypersurface.
\end{proof}

\section{Incidental Subvariety} \label{sec_incidental}
\subsection{Grassmannian} \label{sec_Gr}
In this section, we review the classical theory of Grassmannian and the singularities of the incidence subvariety $D$ of product of Grassmannians. We will show that $\tilde{D}$ is smooth under strict transform of the blow-up of the diagonal. We will find explicit equations of $\tilde{D}$ in an affine coordinate of blow-up of Grassmannians. 

Many of the results are classically known and can be found in \cite{BL00} and \cite{HarrisAG}. Here, we provide a self-contained exposition.

\subsubsection{Lines in $\mathbb P^n$}
Let's first consider the Grassmannian $Gr(2,n+1)$ of lines in $\mathbb P^n$. Parameterizing lines in $\mathbb P^n$ is the same as parameterizing two planes in $\C^{n+1}$ through the origin. So we need two vectors $\vec{v_1}=[a_1,a_2,\ldots ,a_{n+1}]^T$ and $\vec{v_2}=[b_1,b_2,\ldots,b_{n+1}]^T$ that are linearly independent. So, it defines a 2-by-${n+1}$ matrix of rank 2
\begin{equation}
\begin{bmatrix}\label{eqn_2byn+1matrix}
    a_1&a_2&\ldots &a_{n+1}\\
     b_1&b_2&\ldots &b_{n+1}\\
\end{bmatrix}.
\end{equation}

Conversely, (1) any 2-by-$n+1$ matrix determines a 2-dimensional subspace of $\C^4$ if it has rank 2; (2) any two of such matrices determine the same subspace if they differ by a $GL_2(\C)$ action on the left. 

So, to get the one-to-one correspondence between the 2-dimensional subspace of $\C^{n+1}$ and the equivalence classes of 2-by-$n+1$ matrices, we use the Plucker coordinates — consider all of the 2-by-2 minors $x_{ij}=\det\begin{bmatrix}
    a_i&a_j\\
    b_i&b_j
\end{bmatrix}$, where $1\le i<j\le n+1$. Then any two-dimensional subspace corresponds to a point $(x_{ij})$ with Plucker coordinates in $\bbP^{(n+1)n/2-1}$. Place the matrix \eqref{eqn_2byn+1matrix} on top of itself produces a degenerate matrix, and the determinant of its 4-by-4 minor produces the Plucker equations 
\begin{equation}\label{eqn_Plucker}
    x_{ij}x_{st}-x_{is}x_{jt}+x_{it}x_{js}=0,\ 1\le i<j<s<t\le n+1.
\end{equation}

The zero locus defines the Grassmannian $Gr(2,n+1)$. Choosing the affine chart $x_{12}= 1$, we found that the variables $x_{st}$ with $3\le s<t\le n+1$ depend on other factors, and the equations in \eqref{eqn_Plucker} without the term $x_{12}$ are redundant. Hence the open subspace $Gr(2,n+1)\setminus \{x_{12}=0\}$ is isomorphic to $\C^{2(n-1)}$ and has coordinates 
\begin{equation}\label{eqn_Grcoordinate}
    x_{13},x_{14},\ldots, x_{1,n+1},x_{23},x_{24},\ldots,x_{2,n+1}.
\end{equation}

\subsubsection{Incidence variety}
Let $D_L$ be the Schubert subvariety of $Gr(2,n+1)$ that parameterizes the lines in $\bbP^{n+1}$ intersecting a given line $L$. It is normal and irreducible.

We consider a relative version of this cell. 
\begin{definition}\normalfont Let
    $$D\subseteq Gr(2,n+1)\times Gr(2,n+1)$$
be the subvariety parameterizing pairs of incidence lines $D=\{(L_1,L_2)|L_1\cap L_2\neq \emptyset\}$.
\end{definition}

Then, if we let $x_{ij}$, $y_{ij}$ be the Plucker coordinates on the first and the second Grassmannian, then $D$ is given by equations in $Gr(2,n+1)\times Gr(2,n+1)$
\begin{equation}\label{eqn_DGr}
x_{ij}y_{st}-x_{is}y_{jt}+x_{it}y_{js}+x_{st}y_{ij}-x_{jt}y_{is}+x_{js}y_{it}=0,\ 1\le i<j<s<t\le n+1.
\end{equation}
It arises similarly to \eqref{eqn_Plucker} from the Laplace expansion of a 4-by-4 determinant.

\subsubsection{Singularities}
First of all, $D$ is smooth off the diagonal. To see this, $D_L$ has codimension $n-2$ and is only singular at the point $L$, and the projection $p:D\to Gr(2,n+1)$ is a fiber bundle with $p^{-1}(L)\cong D_L$.

As an example, we take $L$ as the line with $x_{12}=1$ and $x_{ij}=0$ for the other $i,j$. Then from \eqref{eqn_DGr}, $D_L$ is given by 
\begin{equation}\label{eqn_DL}
    y_{1s}y_{2t}=y_{1t}y_{2s},\ s,t\ge 3.
\end{equation}

It has an isolated singularity at the origin. (When $n=3$, it is an ordinary node.) We note that \eqref{eqn_DL} are homogeneous, and the same equations in the projective space define the projective tangent cone at the origin, which we denote by $V_n$. In fact, $V_{n}$ is the Segree embedding
$$\mathbb P^1\times \mathbb P^{n-2}\hookrightarrow \mathbb P^{2n-3}.$$

For example, $V_2$ is $\mathbb P^1$, $V_3$ is the quadric surface. In general, $V_n$ is smooth and $\dim(V_n)=\deg(V_n)=n-1$. Moreover, we can regard $D_L$ as the affine cone of $V_n$, and we have

\begin{lemma}
    The blow-up $\Bl_L(D_L)$ of $D_L$ at the singularity $L$ is smooth, with exceptional divisor $V_n$.
\end{lemma}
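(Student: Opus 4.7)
The plan is to recognize $D_L \subset \C^{2(n-1)}$ as an affine cone and use the standard description of the blow-up of a cone at its vertex. Explicitly, the defining equations \eqref{eqn_DL} are precisely the $2\times 2$ minors of the $2\times(n-1)$ matrix
\[
M = \begin{bmatrix} y_{13} & y_{14} & \cdots & y_{1,n+1} \\ y_{23} & y_{24} & \cdots & y_{2,n+1} \end{bmatrix},
\]
so their common zero locus is the affine rank-$\le 1$ determinantal variety. These equations are homogeneous of degree two in the coordinates \eqref{eqn_Grcoordinate}, hence $D_L$ is the affine cone over the projective variety in $\mathbb{P}^{2n-3}$ cut out by the same minors, which is exactly the Segre embedding $V_n = \mathbb{P}^1 \times \mathbb{P}^{n-2} \hookrightarrow \mathbb{P}^{2n-3}$. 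In particular $V_n$ is smooth of dimension $n-1$.

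Having identified $D_L$ as the affine cone $C(V_n)$, I would invoke the classical fact that the blow-up of an affine cone $C(Y) \subset \C^N$ over a projective subvariety $Y \subset \mathbb{P}^{N-1}$ at its vertex is naturally isomorphic to the total space of the tautological line bundle $\mathcal{O}_Y(-1)$, with exceptional divisor identified with the zero section $Y \subset \mathrm{Tot}(\mathcal{O}_Y(-1))$. Applied to $Y = V_n$, this immediately yields $\Bl_L(D_L) \cong \mathrm{Tot}(\mathcal{O}_{V_n}(-1))$. Since $V_n$ is smooth, a line bundle over it is smooth, and the exceptional divisor is $V_n$ as claimed.

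For a self-contained verification, I would work chart-by-chart on the blow-up of the ambient affine space $\C^{2(n-1)}$ at the origin. In the chart where (say) $y_{13}$ is the distinguished coordinate and every other $y_{1s}, y_{2t}$ is written as $y_{13}\cdot y'_{1s}$, $y_{13}\cdot y'_{2t}$, each defining equation $y_{1s}y_{2t} - y_{1t}y_{2s}$ becomes $y_{13}^2(y'_{1s}y'_{2t} - y'_{1t}y'_{2s})$; after discarding the exceptional factor $y_{13}^2$, the strict transform is defined by the equations $y'_{1s}y'_{2t} - y'_{1t}y'_{2s} = 0$ with $y'_{13}=1$ implicit, which (together with the free coordinate $y_{13}$ transverse to the exceptional divisor $\{y_{13}=0\}$) is exactly an affine chart of $\mathcal{O}_{V_n}(-1)$ over an affine open of $V_n$. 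The analogous analysis on the other standard charts covers the whole blow-up, and in each chart smoothness of the strict transform follows from smoothness of $V_n$.

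The main potential obstacle is making the bookkeeping between ``strict transform'' and ``total transform'' rigorous in every coordinate chart, and confirming that the scheme-theoretic strict transform does not pick up extra components along the exceptional divisor. This is handled cleanly by the degree-two homogeneity of the defining equations: each generator of the ideal of $D_L$ is divisible by exactly the square of the chart's distinguished coordinate, so dividing once (as the blow-up ideal dictates) leaves an equation vanishing on the exceptional divisor with multiplicity one, ensuring the strict transform meets the exceptional divisor transversely in $V_n$.
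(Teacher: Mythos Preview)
Your proposal is correct and follows the paper's own reasoning: the paper identifies $D_L$ as the affine cone over $V_n$ immediately before stating the lemma and leaves the lemma unproved, so you have simply supplied the standard details (cone $\Rightarrow$ blow-up at vertex is $\mathrm{Tot}(\mathcal{O}_{V_n}(-1))$, together with a chart check). One small wording fix: in your last paragraph, the strict transform is obtained by \emph{saturating}, i.e.\ dividing each degree-two generator by the full factor $y_{13}^2$, not ``dividing once''; the resulting minor equations in the primed variables then cut out $V_n$ on $\{y_{13}=0\}$, which is the exceptional divisor of $\Bl_L(D_L)$.
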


Now, we need a parametric version of this result. To find the equation of $D$ near the diagonal, we choose the affine chart $x_{12}=y_{12}=1$ on both factors. By introducing the diagonal coordinates $u_{ij}=y_{ij}-x_{ij}$ and using \eqref{eqn_Plucker} and \eqref{eqn_DGr}, we find that $D$ in the affine chart is given by
\begin{equation}\label{eqn_D}
    u_{1s}u_{2t}=u_{1t}u_{2s},\ s,t\ge 3.
\end{equation}

It is singular along the diagonal $\Delta$ given by $u_{ij}=0$. Consider the blow-up of the diagonal
$$\Bl_{\Delta}\big (Gr(2,n+1)\times Gr(2,n+1)\big )\to (Gr(2,n+1)\times Gr(2,n+1).$$  
Then, in the same way, we have



\begin{proposition}\label{prop_Dtilde}
    The strict transform $\tilde{D}$ of $D$ is smooth, and has codimension $n-2$.  The exceptional divisor is a $V_n$-bundle over the diagonal.
\end{proposition}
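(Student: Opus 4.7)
The plan is a direct computation in the standard affine charts of the blow-up, exploiting the product structure of $D$ near the diagonal. In the affine chart $x_{12}=y_{12}=1$ of $Gr(2,n+1)^2$, using the displacement coordinates $u_{ij} = y_{ij}-x_{ij}$, the ideal of $\Delta$ is $(u_{1s},u_{2s})_{s\geq 3}$ and the ideal of $D$ is generated by the $2\times 2$ minors appearing in \eqref{eqn_D}. Consequently $D$ factors in this chart as $\Delta \times C(V_n)$, where $C(V_n) \subset \mathbb{A}^{2(n-1)}_u$ is the affine cone at the origin over the Segre variety $V_n = \mathbb{P}^1 \times \mathbb{P}^{n-2}$. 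Since the ideal of $\Delta$ pulls back to the ideal of the cone vertex on the $u$-factor, blowing up $\Delta$ is equivalent to blowing up the cone vertex, and the task reduces to the standard resolution of this cone singularity performed fiberwise over $\Delta$.

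Next I would work in one standard blow-up chart with $u_{1s_0}$ as leading generator: set $u_{ij} = u_{1s_0}\, v_{ij}$ with $v_{1s_0}=1$. Each defining quadric $u_{1s}u_{2t}-u_{1t}u_{2s}$ acquires an overall factor of $u_{1s_0}^2$, so the strict transform is cut out by $v_{1s}v_{2t} - v_{1t}v_{2s} = 0$. Specializing $s=s_0$ yields the \emph{linear} relations $v_{2t} = v_{2s_0}\, v_{1t}$ for $t\neq s_0$, which express the $n-2$ variables $\{v_{2t}\}_{t\neq s_0}$ as polynomials in $v_{2s_0}$ and $\{v_{1t}\}_{t\neq s_0}$; the remaining Plücker-type quadrics then follow tautologically. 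The strict transform is therefore the graph of a polynomial map, isomorphic to an affine space of dimension $2(n-1)+1+1+(n-2) = 3n-2$, hence smooth and of codimension $(4n-4)-(3n-2)=n-2$ in $\Bl_{\Delta}(Gr(2,n+1)^2)$.

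Finally, setting $u_{1s_0}=0$ in the same chart exhibits the exceptional piece as $\Delta \times \mathbb{A}^1_{v_{2s_0}} \times \mathbb{A}^{n-2}_{\{v_{1t}\}}$, the standard Segre chart of $V_n = \mathbb{P}^1 \times \mathbb{P}^{n-2}$ trivially fibered over $\Delta$: the $\mathbb{P}^1$-factor is parameterized by $[1:v_{2s_0}]$ and the $\mathbb{P}^{n-2}$-factor by the $v_{1t}$ normalized at $v_{1s_0}=1$. Running through the $2(n-1)$ analogous charts (leading generator $u_{1s_0}$ or $u_{2s_0}$, $s_0\geq 3$) covers $\tilde D$, and the transitions between them are precisely the Segre transitions on the fiber, so the exceptional divisor is a Zariski-locally trivial $V_n$-bundle over $\Delta$. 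Patching across the other affine charts $x_{ij}=y_{ij}=1$ of $Gr(2,n+1)^2$ is automatic by symmetry. There is no serious obstacle here; the only subtle point is confirming that the ideal-theoretic strict transform agrees with the polynomial subvariety described above, which is immediate since each defining quadric is visibly divisible by $u_{1s_0}^2$ in the chart.
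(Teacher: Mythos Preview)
Your proposal is correct and follows essentially the same approach as the paper. Both arguments exploit the product decomposition visible in the diagonal coordinates~\eqref{eqn_D}, namely that $D$ is locally $\Delta \times C(V_n)$ with $C(V_n)$ the affine cone over the Segre variety; the paper states the absolute case as a lemma (blowing up the vertex of $C(V_n)$ yields a smooth variety with exceptional divisor $V_n$) and then asserts that the parametric version holds ``in the same way'', while you make the chart computation explicit and verify the dimension count and the Segre transition functions directly.
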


Consequently, Problem \ref{problem} is to study the intersection of two smooth subvarieties.

Note that we can even include the case when $n=2$, $V_2=\mathbb P^1$, as the exceptional divisor of the blowup of $Gr(2,3)\cong (\mathbb P^2)^*$ at a point. So in small dimensions,
\begin{itemize}
    \item $n=2$, $V_2\cong \mathbb P^1$ is a projective line.
    \item $n=3$, $V_3$ is the smooth quadric surface.
   \item $n=4$, $V_4$ is the Segre embedding $\mathbb P^1\times \mathbb P^2\hookrightarrow \mathbb P^5$.
\end{itemize}

\subsection{Fano variety of lines}\label{sec_TypeIIIsmooth}
In this section,  we study the singularities of the incidental subvariety $D_F$ of $F\times F$. We characterize its singularities and prove that the Hilbert scheme of a pair of skew lines $H(X)$ is smooth away from the diagonal (cf. Corollary \ref{cor_H(X)smoothoffdiagonal}).

\begin{definition}\normalfont\label{def_DF}
 Let $D_F$ be the closure in $F\times F$ of the subspace parameterizing pair of incident lines 
    $$I:=\{(L_1,L_2)\in F\times F|L_1\cap L_2\ \textup{is a point}\}.$$
\end{definition}

\begin{proposition}\label{prop_typeIII}
 $D_F$ is irreducible and smooth away from the diagonal.

\end{proposition}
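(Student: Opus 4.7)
For irreducibility, I would use the projection $\pi\colon D_F \to F$ to the first factor and analyze the generic fiber. The fiber $\pi^{-1}(L_1)$ consists of lines $L_2 \in F$ meeting $L_1$; for $L_2 \neq L_1$ the intersection point $p = L_1 \cap L_2$ gives a further map to $L_1 \cong \mathbb{P}^1$, whose fiber over $p$ is an open subset of the Fano scheme $F_p$ of lines through $p$ in $X$. Writing the cubic equation of $X$ in affine coordinates centered at $p$ as $a_1 + a_2 + a_3 = 0$ with $a_i$ homogeneous of degree $i$ in the directions from $p$, the scheme $F_p \subset \mathbb{P}^{n-1}$ is the complete intersection cut out by $a_1, a_2, a_3$, which for a general $p \in X$ is smooth and irreducible of dimension $n-4$. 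Hence the generic fiber of $\pi$ is irreducible of dimension $n-3$, and $D_F$ is irreducible of dimension $(2n-6) + (n-3) = 3n-9$.

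For smoothness away from the diagonal, fix $(L_1, L_2) \in D_F$ with $L_1 \neq L_2$ meeting at a single point $p$, and set $V = T_p\mathbb{P}^n$ and $W = T_pL_1 + T_pL_2$, the tangent space to the spanning plane $P = \mathrm{Span}(L_1, L_2)$. By Proposition \ref{prop_Dtilde}, $D \subset Gr(2,n+1)^2$ is smooth at $(L_1, L_2)$, and $F \times F$ is smooth. A deformation-theoretic computation identifies
\[
T_{(L_1, L_2)}D = \ker\!\Bigl(\widetilde{\mathrm{ev}}_1 - \widetilde{\mathrm{ev}}_2 \colon H^0(N_{L_1|\mathbb{P}^n}) \oplus H^0(N_{L_2|\mathbb{P}^n}) \longrightarrow V/W\Bigr),
\]
where each $\widetilde{\mathrm{ev}}_i$ is evaluation at $p$ followed by the quotient $V/T_pL_i \twoheadrightarrow V/W$. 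Restricting to $T_{(L_1, L_2)}(F \times F) = H^0(N_{L_1|X}) \oplus H^0(N_{L_2|X})$ and using that $W \subset T_pX$, the image of the restricted difference map lands in the codimension-one subspace $T_pX/W$ of $V/W$. The plan is to show this restricted map is surjective onto $T_pX/W$, which forces $T_{(L_1, L_2)}D_F$ to have dimension $2(2n-6) - (n-3) = 3n - 9 = \dim D_F$ and thereby gives smoothness at $(L_1, L_2)$.

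Surjectivity is immediate when either $L_i$ is of the first type, since then $H^0(N_{L_i|X}) \twoheadrightarrow T_pX/T_pL_i$ is already surjective. The main obstacle is the case when both $L_1, L_2$ are of the second type: by Lemma \ref{lemma_(n-3)plane} each $L_i$ sits in a unique codimension-two plane $P^{n-2}_{L_i}$ tangent to $X$ along $L_i$, and the image of $H^0(N_{L_i|X}) \to T_pX/T_pL_i$ coincides with $T_pP^{n-2}_{L_i}/T_pL_i$ (coming from the canonical $\mathcal{O}(1)^{n-3}$ subbundle of $N_{L_i|X}$). Surjectivity onto $T_pX/W$ then reduces to the geometric condition $T_pP^{n-2}_{L_1} + T_pP^{n-2}_{L_2} = T_pX$, equivalently $P^{n-2}_{L_1} \neq P^{n-2}_{L_2}$. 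To rule out coincidence of these two tangent planes, I would argue that a common codimension-two plane tangent to $X$ along both $L_1$ and $L_2$ would make $X \cap P^{n-2}$ a cubic hypersurface in $P^{n-2}$ singular along the entire union $L_1 \cup L_2$, and an analysis of such degenerate cubics (together with the smoothness of $X$) rules this configuration out, completing the proof.
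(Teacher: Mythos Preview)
Your approach is essentially the same as the paper's, though packaged differently. The paper passes to the triple incidence $\tilde{I}=\{(L_1,L_2,x): L_1\cap L_2=\{x\}\}$ and identifies its tangent space as the fiber product
\[
H^0(N_{L_1|X})\times_{N_{L_1|X}(x)} T_xX\times_{N_{L_2|X}(x)} H^0(N_{L_2|X}),
\]
which is equivalent to your kernel-of-a-difference description after a short diagram chase. Both proofs then run the same three-case analysis on the types of $L_1,L_2$ and reduce the hard case (both second type) to showing that the two tangent $(n-2)$-planes $P^{n-2}_{L_1}$ and $P^{n-2}_{L_2}$ are distinct.

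The one place where the paper is sharper is precisely that last step. You propose to argue that a common $P^{n-2}$ would give a cubic in $P^{n-2}$ singular along $L_1\cup L_2$ and then invoke an ``analysis of such degenerate cubics''; this is workable but heavier than necessary. The paper instead restricts all the way down to the $2$-plane $P=\mathrm{Span}(L_1,L_2)\subset P^{n-2}$: tangency of $P^{n-2}$ along each $L_i$ forces $P\cap X$ to be singular along each $L_i$, hence to contain $2L_1+2L_2$, contradicting $\deg(P\cap X)=3$. This one-line degree obstruction replaces your proposed classification argument entirely.

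A minor remark on irreducibility: your generic-fiber argument via $F_p$ is fine in spirit but, as written, only shows there is a unique component dominating $F$; you still need equidimensionality (which you get from the smoothness part) to rule out extra components over special loci. The paper sidesteps this by first proving $\tilde{I}$ is smooth of constant tangent dimension $3n-9$, so every component has the same dimension, and then appeals (implicitly) to connectedness.
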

When $\dim(X)=3,4$, it is proved by \cite[Lem. 12.18]{CG} and \cite[Thm. 4.3.1.2]{Franco}. Here, we generalize Giovenzana's argument to higher dimensions.

\begin{proof}
   
We define the $\tilde{I}$ to be the set of triples $\{(L_1,L_2,x)\in F\times F\times X|L_1\cap L_2=\{x\}\}$. Then, the forgetful map
$$\pi:\tilde{I}\to I$$
is an isomorphism.

$\tilde{I}$ can be regarded as a nested Hilbert scheme, and its tangent space is isomorphic to the fiber products
$$T_{(L_1,L_2,x)}\tilde{I}\cong H^0(N_{L_1|X})\times_{N_{L_1|X}(x)}T_xX\times_{N_{L_2|X}(x)} H^0(N_{L_2|X}).$$

In other words, there is a diagram below, where $\phi_i$ is the restriction to the point $x$. $\psi_i$ is the natural projection.

\begin{figure}[ht]
    \centering
\begin{equation}
\begin{tikzcd}
H^0(N_{L_1|X})\arrow[dr,"\phi_1"] && T_xX\arrow[dl,"\psi_1"]\arrow[dr,"\psi_2"]&&H^0(N_{L_2|X})\arrow[dl,"\phi_2"]\\
&N_{L_1|X}(x)&& N_{L_2|X}(x).
\end{tikzcd}
\end{equation}
\end{figure}

Therefore, tangent vectors to $\tilde{I}$ at a pair of incidental lines $(L_1,L_2)$ intersecting at a point $x$ corresponds to two normal vectors $v_i\in H^0(N_{L_i|X})$, with $i=1,2$ and a tangent direction $u\in T_xX$ such that $\phi_1(v_i)=\psi_i(u)$ for both $i=1,2$.

Now we will show $T_{(L_1,L_2,x)}\tilde{I}$ has the expected rank at all points. Note $\dim H^0(N_{L_i|X})=2(n-3)$ no matter the type of the line. $\dim T_xX=n-1$, and $\dim N_{L_i|X}(x)=n-2$. 

We will discuss in the following three cases.

\begin{enumerate}
    \item Both $L_1$ and $L_2$ are lines of the first type. Hence both $\phi_1$ and $\phi_2$ are surjective and $\dim (\ker(\phi_i))=n-4$. So the tangent space is parameterized by vectors $u\in T_xX$, and lifts of $\psi_i(u)$. Therefore, the tangent space $T_{(L_1,L_2,x)}\tilde{I}$ has dimension $n-1+2(n-4)=3n-9$.
    \item  One of the lines (say $L_1$) is of the first type, and the other (say $L_2$) is of the second type. In this case, $\phi_1$ is surjective, and $\phi_2$ has a one-dimensional cokernel, which lifts to a subspace $V_x$ of $T_xX$ which is the tangent space of the unique $(n-2)$-plane (cf. Lemma \ref{lemma_(n-3)plane}). Hence, the tangent space $T_{(L_1,L_2,x)}\tilde{I}$ is parameterized by vectors $u\in V_x$, and the lifts of $\psi_i(u)$ and has dimension $(n-2)+(n-4)+(n-3)=3n-9$.
    \item Both $L_1$ and $L_2$ are lines of the second type. Hence both $\phi_1$ and $\phi_2$ have a one-dimensional cokernel, and their lifts to $T_xX$ are tangent spaces of the $(n-2)$-plane $E_i$.  Then $E_1\neq E_2$. Otherwise, the plane spanned by $L_1$ and $L_2$ will be tangent to $X$ along both $L_1$ and $L_2$, contradicting the fact $X$ has degree three. Hence, the tangent space $T_{(L_1,L_2,x)}\tilde{I}$ is parameterized by vectors $u$ in a $(n-3)$-dimensional subspace $E_1\cap E_2$ of $T_xX$, and the lifts of $\psi_i(u)$, so it has dimension $(n-3)+2(n-3)=3n-9$.
\end{enumerate}

In summary, the tangent space $T_{(L_1,L_2,x)}\tilde{I}$ has constant dimension everywhere away from diagonal $\Delta_{F}$, hence $\tilde{I}$ and $I$ are smooth. So its closure $D_F$ is irreducible. Since $D_F\setminus I\subseteq \Delta_{F}$, $D_F$ is smooth away from the diagonal.
\end{proof}

The dimension of the tangent space tells us that

\begin{corollary}\label{cor_codimDF}
    $\dim(D_F)=3n-9$. Hence $D_F$ has codimension $n-3$ in $F\times F$, and codimension $n+5$ in $Gr(2,n+1)\times Gr(2,n+1)$.
\end{corollary}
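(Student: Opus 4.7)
The plan is to derive the corollary directly from the computation carried out in the proof of Proposition \ref{prop_typeIII}. There, the tangent space $T_{(L_1,L_2,x)}\tilde{I}$ was shown, via a case analysis on the type of $L_1$ and $L_2$ (first/first, first/second, second/second), to have constant dimension $3n-9$ at every point of the smooth locus $I\subseteq \tilde{I}$. Since $\tilde{I}\cong I$ is smooth, this gives $\dim(I)=3n-9$, and by taking closures $\dim(D_F)=3n-9$ as well.

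The codimension assertions are then purely arithmetic. Recall that $\dim F=2n-6$ (stated at the start of Section \ref{sec_highertriplelines}), so $\dim(F\times F)=4n-12$, and hence
\begin{equation*}
\textup{codim}_{F\times F}(D_F)=(4n-12)-(3n-9)=n-3.
\end{equation*}
Similarly, $\dim Gr(2,n+1)=2(n-1)=2n-2$, so $\dim\big(Gr(2,n+1)\times Gr(2,n+1)\big)=4n-4$, and
\begin{equation*}
\textup{codim}_{Gr(2,n+1)^2}(D_F)=(4n-4)-(3n-9)=n+5.
\end{equation*}

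There is essentially no obstacle here — the work has already been done in Proposition \ref{prop_typeIII}, where the uniformity of $\dim T_{(L_1,L_2,x)}\tilde I=3n-9$ across the three type-stratifications was the substantive input. The only thing one should double-check is that the closure operation does not lower the dimension (it cannot, since $I$ is open and dense in $D_F$ by Definition \ref{def_DF}) and that the proof of Proposition \ref{prop_typeIII} indeed establishes smoothness of $I$ (it does, via constancy of the tangent dimension on an irreducible variety). Thus the corollary reduces to recording these dimension counts.
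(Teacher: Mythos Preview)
Your proposal is correct and matches the paper's approach exactly: the paper introduces the corollary with the single clause ``The dimension of the tangent space tells us that,'' meaning it too regards the result as an immediate numerical consequence of the tangent-space computation in Proposition~\ref{prop_typeIII}. Your arithmetic for the two codimensions is correct, and your remark that passing to the closure $D_F$ preserves the dimension (since $I$ is open and dense in it) is the only point worth noting.
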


\begin{remark}\normalfont\label{remark_nontransverse}
    One can regard $D_F$ as an irreducible component of $(F\times F)\cap D$ with reduced structure. Note that $\textup{Codim}_{F^2}(D_F)<\textup{Codim}_{Gr(2,n+1)^2}(D)$ (cf. Proposition \ref{prop_Dtilde}, Remark \ref{cor_codimDF}), therefore the intersection is not transverse. 
\end{remark}

\begin{remark}\normalfont\label{remark_Delta&DF}
 When $\dim(X)=3$, the reduced structure of $(F\times F)\cap D$ is $D_F\cup \Delta_F$. The two components intersect along the locus of the lines of the second type on the diagonal.
 
 When $\dim(X)\ge 4$, $D_F$ is the reduced structure of $(F\times F)\cap D$. This is because the normal bundle $N_{L|X}$ at a line of the first type has at least one factor of $\mathcal{O}(1)$ and $D_F\supseteq \Delta_F$.
\end{remark}

We will describe the singularities of $D_F$ in Section \ref{sec_DFsing}.

\subsection{Type (III) schemes} \label{sec_typeIII}
A type (III) subscheme of a cubic hypersurface $X\subseteq \mathbb P^n$ is a closed subscheme $Z_{III}$ of $X$ with Hilbert polynomial $2t+2$ and is a union 
$$Z_{III}=L_1\cup L_2\cup Z_p$$
consisting of a pair of incidental lines $L_1$ and $L_2$ with reduced structure, and $Z_p$ is an embedded point supported at $\{p\}=L_1\cap L_2$ and is contained in a linear 3-plane contained in $T_pX$. When $\dim(X)=3$, one refers to \cite[Lem. 4.6]{YZ_SkewLines}.

Since $D_F\setminus \Delta_F$ parameterizes pairs of incidental lines (with an order), the morphism
$$\widetilde{H(X)}\times_{F\times F} (D_F\setminus \Delta_F)\to D_F\setminus \Delta_F, \ Z_{III}\mapsto L_1\cup L_2$$
as restriction of Hilbert-Chow morphism \eqref{eqn_intro-diagram} forgets the embedded point $Z_p$. The fiber is the projective $(n-4)$-space consisting of the set of all linear 3-plane $P^3$ such that
$$\textup{Span}(L_1,L_2)\subseteq P^3\subseteq T_pX.$$

\begin{corollary}\label{cor_H(X)smoothoffdiagonal}
    The Hilbert scheme of a pair of skew lines $H(X)$ is smooth away from the diagonal, that is, smooth along the locus parameterizing schemes of types (I) and (III).
\end{corollary}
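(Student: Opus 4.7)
The plan is to work on the branched double cover $\widetilde{H(X)}$ and exploit the blow-up factorization in diagram \eqref{eqn_intro-diagram}, then descend to $H(X)$ via the $\Z/2$-quotient. The key observation is that the $\Z/2$-action swapping the two ordered lines is free over every point $(L_1, L_2) \in F\times F$ with $L_1\neq L_2$, so smoothness of $\widetilde{H(X)}$ over the off-diagonal locus immediately yields smoothness of $H(X)$ at the corresponding images. Since types (I) and (III) are exactly the Hilbert-Chow images of points over $(F\times F)\setminus \Delta_F$, the problem reduces to showing that $\widetilde{H(X)}$ is smooth over this open locus.

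For type (I), the Chow point $(L_1,L_2)$ lies in the complement of both $\Delta_F$ and $D_F$, so the blow-up centers $\Delta_F$ (for $\sigma_1$) and $\tilde D_F$ (for $\sigma_2$) are empty in a neighborhood; both maps are local isomorphisms and $\widetilde{H(X)}$ is locally $F\times F$, which is smooth. For type (III), the Chow point $(L_1,L_2)$ lies in $D_F\setminus \Delta_F$. Here $\sigma_1$ is still a local isomorphism, so we may identify $\Bl_{\Delta_F}(F\times F)$ locally with $F\times F$; under this identification, $\sigma_2$ becomes the blow-up along $\tilde D_F$, which set-theoretically coincides with $D_F\setminus \Delta_F$. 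By Proposition \ref{prop_typeIII}, $D_F$ is smooth away from the diagonal, and blowing up a smooth subvariety of a smooth ambient variety produces a smooth total space; hence $\widetilde{H(X)}$ is smooth over type (III) points as well. As a byproduct, the exceptional fiber $\mathbb{P}^{n-4}$ over $(L_1,L_2)$ should recover the parameter space of linear $3$-planes $P^3$ with $\textup{Span}(L_1,L_2)\subseteq P^3\subseteq T_pX$, matching the geometric description of the embedded point in $Z_{III}$.

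The one point that I expect to require real care is confirming that, away from $\Delta_F$, the scheme-theoretic intersection $\tilde D_F=\Bl_{\Delta_F}(F\times F)\cap \tilde D$ inside $\Bl_{\Delta}Gr(2,n+1)^2$ agrees with $D_F$ endowed with its reduced induced structure; equivalently, one needs the intersection to be clean off the diagonal. This is the main obstacle, but it should follow from the fact that $D_F\setminus \Delta_F$ is smooth of codimension $n-3$ in $F\times F$ (Corollary \ref{cor_codimDF}) while both $\tilde D$ and $F\times F$ are smooth at the relevant points, so the intersection is already equidimensional of the correct dimension for a clean intersection and carries no embedded structure. Once this local check is dispatched, the corollary follows directly from the blow-up description and the free $\Z/2$-descent.
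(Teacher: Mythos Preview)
Your approach is essentially identical to the paper's: show $\widetilde{H(X)}$ is smooth off the diagonal by invoking Proposition \ref{prop_typeIII} for the blow-up center of $\sigma_2$, then descend via the free $\Z_2$-action. The cleanness concern you raise is legitimate but is glossed over in the paper's three-sentence proof, which simply cites the smoothness of $D_F\setminus\Delta_F$ without separately checking that the scheme-theoretic intersection carries no extra structure.
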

\begin{proof}
 Proposition \ref{prop_typeIII} implies that the blow-up center of $\widetilde{H(X)}\to \Bl_{\Delta_F}(F\times F)$ is smooth away from the diagonal. Hence, $\widetilde{H(X)}$ is smooth away from the diagonal. Since the $\Z_2$ action is free away from the diagonal, its image in the quotient $H(X)$ is also smooth.
\end{proof}

\section{Set-Theoretical Description of  $\textup{Bl}_{\Delta_F}(F\times F)\cap \tilde{D}$} \label{sec_DFtilde}

Starting from this section, our goal is to study the singularities of $H(X)$ supported on the diagonal of $\Sym^2F$, which parameterize type (II) and (IV) schemes. As explained in the Introduction, the Hilbert scheme of a pair of skew lines, $H(X)$, up to a double cover, arises from two successive blow-ups, and the second one blows up $\textup{Bl}_{\Delta_F}(F\times F)$ on the closed subscheme $\textup{Bl}_{\Delta_F}(F\times F)\cap \tilde{D}$. Hence to describe $H(X)$, the main question is
\begin{question}
    Is $\textup{Bl}_{\Delta_F}(F\times F)\cap \tilde{D}$ reduced, irreducible? Where are its singularities locate?
\end{question}
We aim to answer these questions in the following sections.



In this section, we use appropriate coordinates to find the defining equations of the scheme-theoretical intersection, as a preparation for the computation to be carried out in the next section. We use the first-order data to give a set-theoretical description of the exceptional locus.

\subsection{First order data} Recall that $F$ is the Fano variety of lines on cubic hypersurface $X$, with $\dim(X)=n-1\ge 3$. Then $F\subseteq Gr(2,n+1)$ is a closed subvariety, and local equations can be written with respect to the standard form of the cubic equation \eqref{eqn_cubicnfold@2ndtype} at a given line $L$.  In the local coordinates \eqref{eqn_Grcoordinate}, suppose the line $L$ is of the first type at the origin $x_{ij}=0$, then $F$ has equations \cite[(6.14)]{CG}
\begin{equation}\label{eqn_F1sttype}
\begin{cases}
   x_{13}+\cdots=0.\\
    x_{14}+x_{23}+\cdots=0.\\
    x_{15}+x_{24}+\cdots=0.\\
    x_{25}+\cdots=0.
\end{cases}
\end{equation}

If the line $L$ is of the second type at $x_{ij}=0$, then $F$ has equations \cite[(6.15)]{CG}
\begin{equation}\label{eqn_F2ndtype}
\begin{cases}
   x_{13}+\cdots=0.\\
    x_{14}+\cdots=0.\\
    x_{23}+\cdots=0.\\
    x_{24}+\cdots=0.
\end{cases}
\end{equation}
 Here, $\cdots$ denotes a polynomial that involves terms with orders at least $2$. In this section, we only need the first-order information. One should also note that the first-order data are equivalent to the information from the normal bundle $N_{L|X}$ (cf. Definition \ref{def_1st2ndtype}). We prefer to work with explicit local equations because the computation to be carried out in the next section is based on them.

\subsubsection{Diagonal coordinates}
Now, let $x_{ij}$ and $y_{ij}$ be the coordinates on the first and second factors of Grassmannian. In the affine chart $x_{12}=y_{12}=1$, we introduce diagonal coordinates $u_{ij}=y_{ij}-x_{ij}$ as in Section \ref{sec_Gr}. Then $u_{ij}=0$ defines the diagonal. Now, in the new coordinates $(\underline{u_{ij}},\underline{x_{ij}})=(u_{13},\ldots, u_{2,n+1},x_{13},\ldots, x_{2,n+1})$, a line of the first type $L$ on the diagonal of $F\times F\subseteq Gr(2,n+1)\times Gr(n+1)$ has equations \eqref{eqn_F1sttype} together with
\begin{equation}\label{eqn_F1sttypeU-linear}
\begin{cases}
   u_{13}+\cdots =0,\\
    u_{14}+u_{23}+\cdots=0,\\
    u_{15}+u_{24}+\cdots=0,\\
    u_{25}+\cdots=0.
\end{cases}
\end{equation}

\subsection{Set-theoretical fiber}
Note that to find the set-theoretical fiber of the intersection $\Bl_{\Delta}Gr(2,n+1)^2\cap \tilde{D}$
over the line $(L,L)\in \Delta_F$, we just need to pull back the equations to the blow-up $\Bl_{\Delta}Gr(2,n+1)^2$, intersect with local equations of $\tilde{D}$, and set $x_{ij}=0$. The higher-order terms do not contribute.

Therefore, the fiber over a line of the first type is given by linear parts of \eqref{eqn_F1sttypeU-linear} together with \eqref{eqn_D} in the blowup coordinate $u_{ij}\lambda_{st}=u_{st}\lambda_{ij}$  (cf. \eqref{eqn_blowupcoord}). Direct computation shows it cuts the fiber $V_n$ of $\tilde{D}$ by 
$$\lambda_{1i}=\lambda_{2i}=0,\ i=3,4,5, $$
which is isomorphic to $V_{n-3}$.

Similarly, when we consider diagonal coordinates $F\times F$ near a line of the second type, the equation is given by \eqref{eqn_F2ndtype} together with 
\begin{equation}\label{eqn_F2ndtypeU-linear}
\begin{cases}
   u_{13}+\cdots =0,\\
    u_{14}+\cdots=0,\\
    u_{23}+\cdots=0,\\
    u_{24}+\cdots=0.
\end{cases}
\end{equation}

Hence, the fiber of the intersection over a line of second type $L\in \Delta_F$ cuts $V_{n}$ by 
$$\lambda_{13}=\lambda_{14}=\lambda_{23}=\lambda_{24}=0,$$
which is isomorphic to $V_{n-2}$.



\begin{notation}\normalfont
   Let $(\tilde{D}_F):=\textup{Bl}_{\Delta_F}(F\times F)\cap \tilde{D}$ denote the intersection. Let 
$$\Delta(F_2):=\{(L,L)\in D_F\subseteq F\times F\ |\  L\in F_2\}$$
be the diagonal embedding of the locus of lines of the second type.  
\end{notation}

To summarize, we proved

\begin{proposition}\label{prop_DFtilde-fiber}
 The restriction of $(\tilde{D}_F)_{red}$ to the exceptional locus over the diagonal $(\tilde{D}_F)_{red}|_{\Delta_F}$ is a $V_{n-3}$-bundle over the lines of the first type $\Delta_F\setminus \Delta(F_2)$ and a $V_{n-2}$-bundle over the lines of the second type $\Delta(F_2)$.  
\end{proposition}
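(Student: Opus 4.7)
The plan is to reduce the bundle statement to a single fiberwise calculation on the Segre variety $V_n$, and then promote the result to a family statement by observing that the calculation varies algebraically in $L$ within each type stratum.

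First I fix $(L,L)\in \Delta_F$ and choose Plücker coordinates so that $L$ is cut out by $x_{12}=1$, working in the affine chart $x_{12}=y_{12}=1$ with diagonal coordinates $u_{ij}=y_{ij}-x_{ij}$ as in Section \ref{sec_Gr}. By Proposition \ref{prop_Dtilde}, the exceptional fiber of $\tilde D$ over any diagonal point is the Segre $V_n\subset \mathbb P^{2n-3}$ with projective coordinates $\lambda_{ij}$, cut out by $\lambda_{1s}\lambda_{2t}=\lambda_{1t}\lambda_{2s}$. The fiber of $\Bl_{\Delta_F}(F\times F)$ over $(L,L)$ is a projective subspace of this $\mathbb P^{2n-3}$ cut out by the linear parts in $u_{ij}$ of the defining equations of $F\times F$, since the $x_{ij}$-dependent terms and all higher-order contributions vanish upon passage to the basepoint.

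The core step is the resulting linear-algebra calculation. Reading off the linear parts of \eqref{eqn_F1sttypeU-linear} for a first-type line yields the four conditions $\lambda_{13}=0$, $\lambda_{14}+\lambda_{23}=0$, $\lambda_{15}+\lambda_{24}=0$, $\lambda_{25}=0$; the second-type case \eqref{eqn_F2ndtypeU-linear} gives $\lambda_{13}=\lambda_{14}=\lambda_{23}=\lambda_{24}=0$. Parameterizing $V_n\cong \mathbb P^1\times\mathbb P^{n-2}$ by $\lambda_{1j}=ac_j$, $\lambda_{2j}=bc_j$, a short case analysis on whether $a$ or $b$ vanishes shows that in the first-type case the system forces $c_3=c_4=c_5=0$, yielding $\mathbb P^1\times\mathbb P^{n-5}\cong V_{n-3}$, while in the second-type case it forces $c_3=c_4=0$, yielding $\mathbb P^1\times\mathbb P^{n-4}\cong V_{n-2}$. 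These match the coordinate vanishings $\lambda_{1i}=\lambda_{2i}=0$ recorded in the excerpt.

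To globalize, I observe that the normal-form reduction \eqref{eqn_cubicnfold@2ndtype} and hence the coefficients of the four linear equations depend algebraically on $L$ throughout each stratum, so the linear conditions assemble into a rank-four subbundle of the trivial $\mathbb P^{2n-3}$-bundle over the stratum; intersecting with the Segre fibration of the exceptional divisor of $\tilde D$ produces the claimed $V_{n-3}$- and $V_{n-2}$-bundles. The main subtlety is the rank jump between the two strata: the same count of four linear equations cuts $V_n$ in different dimensions, because for a second-type line the equations are four independent coordinate vanishings, whereas for a first-type line two of them are pairings $\lambda_{1i}+\lambda_{2,i-1}=0$ that interact non-trivially with the Segre relations. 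This is consistent with the non-transverse nature of $F\times F\cap D$ highlighted in Remark \ref{remark_nontransverse}, and is precisely why $(\tilde D_F)_{red}|_{\Delta_F}$ is described by a stratified family rather than a single bundle.
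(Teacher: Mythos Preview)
Your proposal is correct and follows essentially the same approach as the paper: both compute the fiber by intersecting the linear parts of the diagonal equations of $F\times F$ with the Segre $V_n$ inside the exceptional $\mathbb P^{2n-3}$, and you spell out via the Segre parameterization $\lambda_{1j}=ac_j,\ \lambda_{2j}=bc_j$ precisely the ``direct computation'' that the paper records only by its outcome $\lambda_{1i}=\lambda_{2i}=0$ for $i=3,4,5$ (resp.\ $i=3,4$). The one point the paper adds that you omit is the $n=4$ edge case, where $(F\times F)\cap D=D_F\cup\Delta_F$ is reducible and one must discard the full exceptional divisor to see that the fiber over a first-type line is genuinely empty, consistent with $V_1=\emptyset$.
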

\begin{proof}
    This is based on the above analysis. The only exception is when $n=4$ and $X$ is a cubic threefold, where $(F\times F)\cap D$ is reducible. When we pullback the diagonal equations \eqref{eqn_F1sttypeU-linear} to the blowup, there is another irreducible component $E\subseteq \Bl_{\Delta_F}(F\times F)$, which is the entire exceptional divisor. Remove that, we get the intersection $\textup{Bl}_{\Delta_F}(F\times F)\cap \tilde{D}$ whose restriction to the diagonal is only supported over lines of the second type.
\end{proof}

\begin{figure}[ht]
\centering
\includegraphics[width=0.5\textwidth]{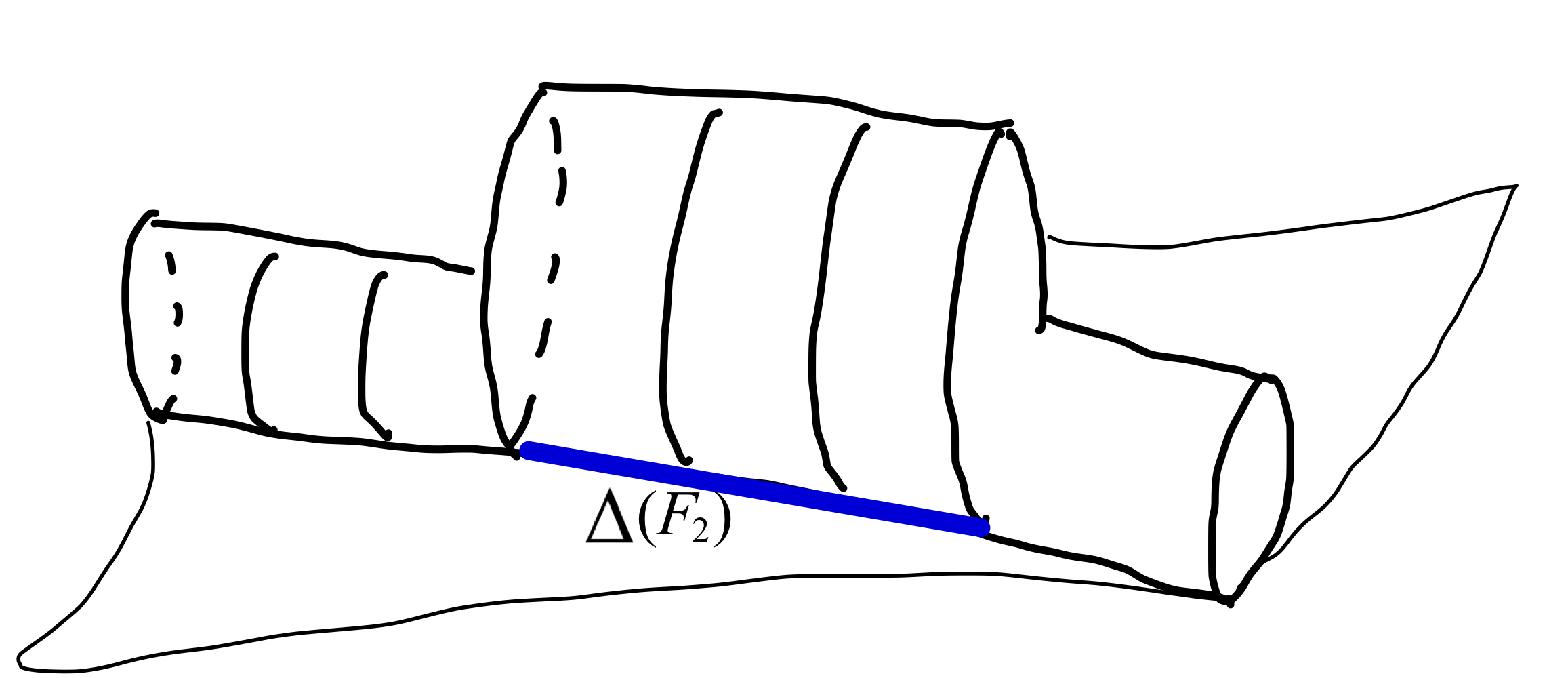}
\caption{Set-theoretical Picture of $\textup{Bl}_{\Delta_F}(F\times F)\cap \tilde{D}$}
\end{figure}

\begin{example}\normalfont
 When $X$ is a cubic threefold, $(\tilde{D}_F)_{red}|_{\Delta_F}$ is a $\mathbb P^1$-bundle over $\Delta(F_2)$.  
\end{example}
\begin{example} \normalfont
      When $X$ is a cubic fourfold, $(\tilde{D}_F)_{red}|_{\Delta_F}$ is a $\mathbb P^1$-bundle over the locus of the lines of the first type $\Delta_F\setminus \Delta(F_2)$, and a quadric surface bundle over the second type $\Delta(F_2)$.
\end{example}

\subsection{Type (II) and type (IV) schemes} \label{sec_TypeII&IV}
In this section, we give a modular interpretation of $\tilde{D}_F$ on the diagonal.
\subsubsection{Type (II) scheme}
A type (II) subscheme of a cubic hypersurface $X$ is a closed subscheme $Z_{II}$ of $X$ that arises from a line $L$ and a subline bundle 
$$\mathcal{O}_L\subseteq N_{L|X}.$$

The scheme $Z_{II}$ is the infinitesimal neighborhood of $L$ in the quadric surface "spanned" by $L$ and the normal direction $\mathcal{O}_L$. (When $\dim(X)=3$, one refers to \cite[Prop. 4.9]{YZ_SkewLines}.)

We note that the second blow-up
$\widetilde{H(X)}\to \Bl_{\Delta_F}(F\times F)$
is an isomorphism on the complement of $\tilde{D}_F$. Over the diagonal, this is precisely the Type (II) locus. Hence, we have

\begin{lemma}
    $H(X)$ is smooth on the locus that parameterizes type (II) schemes.
\end{lemma}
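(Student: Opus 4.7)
The plan is to show that on the type (II) locus the second blow-up $\sigma_2$ is an isomorphism, reduce the smoothness question on $\widetilde{H(X)}$ to the smoothness of $\Bl_{\Delta_F}(F\times F)$, and then descend to the $\Z_2$-quotient via the smoothness of the Hilbert scheme of two points $F^{[2]}$.

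First I would observe that the type (II) locus in $\widetilde{H(X)}$ lies in the open set on which $\sigma_2$ restricts to an isomorphism. By the description at the start of this subsection, a type (II) subscheme corresponds to a line $L\in F$ together with a sub-line bundle $\mathcal{O}_L\subseteq N_{L|X}$, equivalently a tangent direction $v\in H^0(N_{L|X})$ whose image in $N_{L|X}$ saturates to $\mathcal{O}_L$. By the set-theoretical analysis of Proposition \ref{prop_DFtilde-fiber}, such a point of the exceptional divisor of $\sigma_1$ lies in the complement of the second blow-up center $\tilde{D}_F$. Since $\Bl_{\Delta_F}(F\times F)$ is smooth ($F$ is smooth of dimension $2n-6$ and $\Delta_F$ is a smooth closed subvariety), $\widetilde{H(X)}$ is smooth along the preimage of the type (II) locus.

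To descend smoothness to $H(X) = \widetilde{H(X)}/\Z_2$, I would use that the swap involution $\tau$ on $F\times F$ preserves both $\Delta_F$ and the symmetric subvariety $\tilde{D}$, hence lifts $\tau$-equivariantly through both blow-ups. The quotient $\Bl_{\Delta_F}(F\times F)/\Z_2$ is the Hilbert scheme $F^{[2]}$, which is smooth by the classical local computation: in an affine chart the action takes the form $v_1\mapsto -v_1$ on the exceptional coordinate while fixing the transverse coordinates, so the invariant ring is polynomial. Because $\sigma_2$ is a $\Z_2$-equivariant isomorphism on a $\tau$-invariant Zariski open neighborhood of the type (II) locus, $H(X)$ is locally isomorphic to $F^{[2]}$ there, and hence smooth.

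There is no substantive obstacle in this step: the identification of the type (II) locus with (part of) the complement of $\tilde{D}_F$ in the exceptional divisor of $\sigma_1$ is already encoded in Proposition \ref{prop_DFtilde-fiber} together with the normal-bundle saturation analysis above, and the smoothness of the Hilbert scheme of two points on a smooth base is standard. The more delicate analysis of the sections that follow is needed precisely because type (IV) schemes do lie on $\tilde{D}_F$, where the second blow-up has genuine content and where higher-triple-line phenomena can introduce singularities.
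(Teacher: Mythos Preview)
Your argument is correct and follows the same route as the paper's: the paper simply observes that $\sigma_2$ is an isomorphism on the complement of $\tilde{D}_F$, and that over the diagonal this complement is exactly the type (II) locus. Your version is more explicit on two points the paper leaves tacit---the identification of the type (II) locus with the complement of $\tilde{D}_F$ in the exceptional divisor (which you correctly source to Proposition~\ref{prop_DFtilde-fiber} and the normal-bundle saturation), and the descent from $\widetilde{H(X)}$ to $H(X)$ via the smoothness of $F^{[2]}=\Bl_{\Delta_F}(F\times F)/\Z_2$---but the underlying idea is identical.
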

\subsubsection{Type (IV) scheme}
A type (IV) subscheme of $X$ is a closed subscheme $Z_{IV}$ with Hilbert polynomial $2t+2$ supported on a line $L$, and admits a primary decomposition 
\begin{equation}\label{eqn_typeIV}
    Z_{IV}=Z_{L,P^2}\cup Z_p,
\end{equation}
where
\begin{itemize}
    \item $Z_{L,P^2}$ is the first-order infinitesimal neighborhood of $L$ in a 2-plane $P^2$ which is tangent to $X$ at all points on $L$;
    \item $Z_p$ is an embedded point whose ideal is the square of $I_{p,P^3}$, the ideal of a reduced point $p\in L$ in a 3-plane $P^3$ tangent to $X$ at $p$.
\end{itemize}

One can filtrate the data of type (IV) scheme \eqref{eqn_typeIV} as follows: (1) the plane $P^2$ corresponds to a subline bundle 
\begin{equation}\label{eqn_O(1)inN_{L|X}}
    \mathcal{O}_L(1)\subseteq N_{L|X},
\end{equation}
and (2) an embedded point corresponds to a 3-plane such that $P^2\subseteq P^3\subset T_pX$.

This filtration corresponds to the factorization of the Hilbert-Chow morphism
$$\widetilde{H(X)}\xrightarrow{\sigma_2} \Bl_{\Delta_F}(F\times F)\xrightarrow{\sigma_1} F\times F,$$
$$Z_{IV}\mapsto (p,L,P^2)\mapsto 2L.$$

For a type (IV) scheme, the intermediate step $\sigma_2$ forgets the embedded point but remembers the normal direction $\mathcal{O}_L(1)$ (and the reduced point). This data is captured at the intersection $(\tilde{D}_F)|_{\Delta_F}$ on the diagonal.

To give a set-theoretical interpretation, Proposition \ref{prop_DFtilde-fiber} says that for each line $L$ of the first type, there are $$\mathbb P^1\times \mathbb P^{n-5}$$
many "intermediate" type (IV) schemes - the $\mathbb P^1$ factor parameterizes $p\in L$, while the second factor parameterizes the embeddings \eqref{eqn_O(1)inN_{L|X}}. When $L$ is a line of the second type, the second factor is switched to $\mathbb P^{n-4}$ due to the change of the normal bundle (see Definition \ref{def_1st2ndtype}).

Note that when $n=4$ and $X$ is a cubic threefold, the fiber is empty over a line a first type. This just means that there is no type (IV) scheme supported on a line of the first type, as observed in \cite[Lemma 4.8]{YZ_SkewLines}.

\section{Tangent Space of $\textup{Bl}_{\Delta_F}(F\times F)\cap \tilde{D}$}\label{sec_general}

In Section \ref{sec_TypeII&IV}, we describe the locus $\textup{Bl}_{\Delta_F}(F\times F)\cap \tilde{D}$ and the exceptional fiber set theoretically based on the first-order data of a line in $X$. 

In this section, we will study the scheme-theoretical information of $\textup{Bl}_{\Delta_F}(F\times F)\cap \tilde{D}$. We will show that it is irreducible and find the condition when it is smooth. This is based on the study of the second-order data of $F$ around a line $L$. We particularly focus on the case where $L$ is a line of the second type.

\subsection{Equations on the blow-up}
Here, we rewrite the diagonal equations \eqref{eqn_F2ndtypeU-linear} of $F\times F$ at a line of second type by including the second-order data.  
\begin{equation}\label{eqn_F2ndtypeU}
\begin{cases}
   u_{13}+h_{13}(u_{13},\ldots, u_{2,n+1},x_{13},\ldots, x_{2,n+1})=0.\\
    u_{14}+h_{14}(u_{13},\ldots, u_{2,n+1},x_{13},\ldots, x_{2,n+1})=0.\\
    u_{23}+h_{23}(u_{13},\ldots, u_{2,n+1},x_{13},\ldots, x_{2,n+1})=0.\\
    u_{24}+h_{24}(u_{13},\ldots, u_{2,n+1},x_{13},\ldots, x_{2,n+1})=0.
\end{cases}
\end{equation}
where $h_{ij}$ has degree starting from $2$ and equals to the difference $\phi_{ij}(y_{st})-\phi_{ij}(x_{st})$ of second order terms of \eqref{eqn_F2ndtype}. (Also see \eqref{eqn_phiij} for explicit description.) To express $h_{ij}$ in terms of diagonal coordinates $u_{st}$ and $x_{st}$, we need the following lemma.

\begin{lemma}\label{Lemma_divisiblebyu}
    Let $f(x_1,\ldots,x_m)$ be a homogeneous polynomial of degree $d$. Let $u_i=y_i-x_i$, then
    \begin{equation}\label{eqn_Lemmafx-fy}
      f(y_1,\ldots,y_m)-f(x_1,\ldots,x_m)  
    \end{equation}
is a homogeneous polynomial of degree $d$ in $u_1,\ldots, u_m, x_1,\ldots, x_m$. 
\end{lemma}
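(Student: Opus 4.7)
The approach is a direct multinomial expansion. Writing $f = \sum_{|\alpha|=d} c_\alpha x^\alpha$ with $\alpha = (\alpha_1,\ldots,\alpha_m)$ a multi-index, I would substitute $y_i = x_i + u_i$ to get
$$f(y_1,\ldots,y_m) = \sum_{|\alpha|=d} c_\alpha \prod_{i=1}^m (x_i + u_i)^{\alpha_i},$$
and then expand each factor $(x_i + u_i)^{\alpha_i}$ by the binomial theorem. Every resulting monomial is of the form $x^\beta u^\gamma$ with $\beta + \gamma = \alpha$, so its total degree in the combined variables $(x,u)$ is $|\alpha| = d$. Collecting the terms with $\gamma = 0$ (i.e.\ no $u$-factor) recovers $f(x_1,\ldots,x_m)$ exactly, so after subtraction
$$f(y_1,\ldots,y_m) - f(x_1,\ldots,x_m) = \sum_{|\alpha|=d}\, c_\alpha \sum_{\substack{\beta+\gamma=\alpha \\ \gamma \neq 0}} \binom{\alpha}{\beta,\gamma}\, x^\beta u^\gamma,$$
which is manifestly homogeneous of degree $d$ in $(u_1,\ldots,u_m,x_1,\ldots,x_m)$.

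As a bonus the expression above lies in the ideal $(u_1,\ldots,u_m)$, since every monomial appearing contains at least one $u_i$; this is the statement implicit in the lemma's name and is the substantive output for the subsequent use in rewriting the second-order terms $h_{ij} = \phi_{ij}(y) - \phi_{ij}(x)$ in the diagonal coordinates of \eqref{eqn_F2ndtypeU}.

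There is no real obstacle: the argument is a one-step multinomial calculation. The only thing to be careful about is the bookkeeping, namely that the degree-$d$ homogeneity refers to the joint degree in the $2m$ variables $(u,x)$ rather than in either group separately, and that the cancellation of the $\gamma = 0$ terms with $f(x)$ is exact, leaving a polynomial each of whose monomials is divisible by some $u_i$.
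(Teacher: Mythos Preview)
Your argument is correct and is exactly the natural one-line multinomial expansion. The paper does not supply a proof of this lemma at all; it states the result and immediately passes to the degree-two example (Example~\ref{example_degree2}) and Corollary~\ref{Cor_divisiblebyu}, so your write-up is if anything more explicit than what the paper offers. Your added observation that the difference lies in the ideal $(u_1,\ldots,u_m)$ is precisely the content used downstream in Corollary~\ref{Cor_divisiblebyu}.
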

For what follows, the only thing we need from the above is the degree-two terms.

\begin{example}\normalfont \label{example_degree2} Direct computations show that
$$y_{i}^2-x_{i}^2=u_{i}(u_{i}+2x_{i}),$$
    and 
        \begin{align*}
       y_{i}y_{j}-x_{i}x_{j}&=(y_{i}y_{j}-y_{i}x_{j})+(y_{i}x_{j}-x_{i}x_{j}) \\
       &=(u_{i}+x_{i})u_{j}+u_{i}x_{j}\\
       &=u_{j}x_{i}+u_{i}x_{j}+u_{i}u_{j}.
    \end{align*}
\end{example}






\begin{corollary}\label{Cor_divisiblebyu}
    $h_{ij}=\sum u_{ab}p_{ab}+\sum u_{ab}u_{cd}q_{abcd}$, where $p_{ab}$ is a polynomial of degree at least one and only involves $x_{13},\ldots, x_{2,n+1}$.
\end{corollary}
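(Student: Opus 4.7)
The plan is to read off the conclusion directly from Lemma \ref{Lemma_divisiblebyu} applied to the second-order data $\phi_{ij}$ that appears in the local equations \eqref{eqn_F2ndtype} of $F$ near a line of the second type. Since by definition $h_{ij}(u,x) = \phi_{ij}(y_{st}) - \phi_{ij}(x_{st})$ with $y_{st} = x_{st} + u_{st}$, and $\phi_{ij}$ is a polynomial whose homogeneous components have degree $\geq 2$, Lemma \ref{Lemma_divisiblebyu} (applied term-by-term to each homogeneous piece) immediately expresses $h_{ij}$ as a polynomial in the combined variables $u_{ab}, x_{ab}$ in which every monomial has total degree at least $2$.

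Next, I would observe that $h_{ij}\big|_{u=0} = \phi_{ij}(x_{st}) - \phi_{ij}(x_{st}) = 0$, so after expanding, every monomial appearing in $h_{ij}$ must contain at least one factor of some $u_{ab}$. I then partition the monomials according to whether they involve exactly one $u$ or at least two $u$'s: collecting the former gives a sum $\sum u_{ab}\, p_{ab}(x)$ in which the $p_{ab}$ are polynomials in the $x_{st}$ alone, while collecting the latter gives an expression $\sum u_{ab}u_{cd}\, q_{abcd}(u,x)$, matching the asserted shape.

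Finally, to see that each $p_{ab}$ has degree $\geq 1$, I would use the degree lower bound from the first paragraph: a monomial of total degree $\geq 2$ with exactly one $u$-factor must carry at least one $x$-factor, so each $p_{ab}$ is divisible by $x_{st}$ for some $s,t$ and hence has degree $\geq 1$. This last step is where one just has to be careful that $\phi_{ij}$ contains no linear terms, which is exactly the content of \eqref{eqn_F2ndtype} (the $u_{ij}$-linear part has been separated out already). Overall this is a bookkeeping corollary with no real obstacle; the only point to verify carefully is that the $\phi_{ij}$ appearing in the expansion of \eqref{eqn_F2ndtype} are indeed polynomials of order $\geq 2$ in the Plücker coordinates, which is part of the local structure already recorded in \cite{CG}.
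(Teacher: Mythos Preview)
Your proposal is correct and matches the paper's intended argument: the paper states this corollary immediately after Lemma~\ref{Lemma_divisiblebyu} and Example~\ref{example_degree2} without further proof, and your filling-in of the details (apply the lemma degree-by-degree, observe $h_{ij}|_{u=0}=0$ to force a $u$-factor in every monomial, then split by the number of $u$-factors and use $\deg\phi_{ij}\ge 2$ to get $\deg p_{ab}\ge 1$) is exactly the intended bookkeeping.
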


\subsubsection{Blowup coordinates}
We blowup the diagonal on $\C^{2n-2}\times \C^{2n-2}$ with coordinate $(u_{ij},x_{ij})$, the equation of the blow-up is
\begin{equation}\label{eqn_blowupcoord}
    (u_{13},\ldots,u_{2,n+1},x_{13},\ldots, x_{1,n+1};\lambda_{13},\ldots, \lambda_{2,n+1})\in \C^{2n-2}\times \C^{2n-2}\times \bbP^{2n-3}, u_{ij}\lambda_{st}=u_{st}\lambda_{ij}.
\end{equation}

Choose an affine chart $\lambda_{2,n+1}=1$, and set $u=u_{2,n+1}$, then we have 
\begin{equation}\label{eqn_blowupcoordinate}
    u_{ij}=\lambda_{ij}u.
\end{equation}

Pullback the equation \eqref{eqn_F2ndtypeU} to the blow-up and applying the substitution \eqref{eqn_blowupcoordinate}, we find by Corollary \ref{Cor_divisiblebyu} that $u$ can be factored out. Hence we set $\bar{h}_{ij}=h_{ij}/u$, and we obtain the defining equations of the strict transform $\textup{Bl}_{\Delta_F}(F\times F)$ of $F\times F$ over a line of the second type on the diagonal 
\begin{equation}\label{eqn_F2ndtypeUBlowup}
\begin{cases}
   \lambda_{13}+\bar{h}_{13}(u\lambda_{13},\ldots, u\lambda_{2,n},u,x_{13},\ldots, x_{2,n+1})=0,\\
   \lambda_{14}+\bar{h}_{14}(u\lambda_{13},\ldots, u\lambda_{2,n},u,x_{13},\ldots, x_{2,n+1})=0,\\
    \lambda_{23}+\bar{h}_{23}(u\lambda_{13},\ldots, u\lambda_{2,n},u,x_{13},\ldots, x_{2,n+1})=0,\\
    \lambda_{24}+\bar{h}_{24}(u\lambda_{13},\ldots, u\lambda_{2,n},u,x_{13},\ldots, x_{2,n+1})=0.
\end{cases}
\end{equation}

To compute $\bar{h}_{ij}$, we need to use Example \eqref{example_degree2}. If the second-order term of one of the equations in \eqref{eqn_F2ndtype} includes $x_{ab}x_{cd}$, then the corresponding $h_{ij}$ contains the term $u_{ab}x_{cd}+u_{cd}x_{ab}+u_{ab}u_{cd}$. Substituting using \eqref{eqn_blowupcoordinate}, $\bar{h}_{ij}$ includes the term
\begin{equation}\label{eqn_hijbarterm}
\lambda_{ab}x_{cd}+\lambda_{cd}x_{ab}+\lambda_{ab}\lambda_{cd}u.
\end{equation}
These calculations will be used in Section \ref{sec_SolveEquations} to determine the rank of the Jacobian matrix.

\subsection{Jacobian matrix}
To summarize the equations we found, we look at the defining equations of the intersection $\textup{Bl}_{\Delta_F}(F\times F)\cap \tilde{D}$, restricted to the preimage of the diagonal under the blow-up map. We have three sets of equations
\begin{align}
&\lambda_{s}+\bar{h}_{s}(u\lambda_{13},\ldots, u\lambda_{2,n},u,x_{13},\ldots, x_{2,n+1})=0, \label{eqn_cubicnfold1}\\
&x_{s}+\phi_{s}(x_{13},\ldots,x_{2,n+1})=0. \label{eqn_cubicnfold2}
\end{align}
for subscript $s=13, 14, 23, 24$, defining $\textup{Bl}_{\Delta_F}(F\times F)$, together with the pullback of \eqref{eqn_D} 
\begin{equation}\label{eqn_cubicnfold3}
 \lambda_{1i}-\lambda_{1,n+1}\lambda_{2i}=0,\ i=3,\ldots,n
\end{equation}
defining $\tilde{D}$.

Take the partial derivatives of these equations and evaluate at $u=x_{13}=\cdots=x_{2,n+1}=0$ on the diagonal at the given line $L$ of the second type. The parameters $\lambda_{ij}$ are allowed to take any value on the fiber, which is described in Proposition \ref{prop_DFtilde-fiber}. We obtain the Jacobian matrix \eqref{eqn_generalJacobian1}.

\begin{equation}\label{eqn_generalJacobian1}
\begin{blockarray}{ccccccc|cccc|cccccccc}
\lambda_{13} & \lambda_{14} & \lambda_{23} & \lambda_{24}&\lambda_{15}&\cdots& \lambda_{1n} & x_{13} & x_{14} & x_{23} & x_{24} & \lambda_{1,n+1} & u & x_{15} & x_{25}&\cdots \\
\begin{block}{(ccccccc|cccc|ccccccc)c}
  1 & 0 & 0 & 0 & 0 & \cdots& 0 & * & * & * & * & 0 &\frac{\partial \bar{h}_{13}}{\partial u} & \frac{\partial \bar{h}_{13}}{\partial x_{15}} & \frac{\partial \bar{h}_{13}}{\partial x_{25}}&\cdots \\
  0 & 1 & 0 & 0 & 0 & \cdots & 0 & * & * & * & * & 0 & \frac{\partial \bar{h}_{14}}{\partial u} & \frac{\partial \bar{h}_{14}}{\partial x_{15}} & \frac{\partial \bar{h}_{14}}{\partial x_{25}}&\cdots \\
  0 & 0 & 1 & 0 & 0 & \cdots & 0 & * & * & * & * & 0 & \frac{\partial \bar{h}_{23}}{\partial u} & \frac{\partial \bar{h}_{23}}{\partial x_{15}} & \frac{\partial \bar{h}_{23}}{\partial x_{25}}&\cdots \\
  0 & 0 & 0 & 1 & 0 & \cdots & 0 & * & * & * & * & 0 & \frac{\partial \bar{h}_{24}}{\partial u} & \frac{\partial \bar{h}_{24}}{\partial x_{15}} & \frac{\partial \bar{h}_{24}}{\partial x_{25}}&\cdots \\
\cline{1-16}
  0 & 0 & 0 & 0 & 0 & \cdots & 0 & 1 & 0 & 0 & 0 & 0 & 0 & 0 & 0 &\cdots \\
  0 & 0 & 0 & 0 & 0 & \cdots & 0 & 0 & 1 & 0 & 0 & 0 & 0 & 0 & 0 &\cdots \\
  0 & 0 & 0 & 0 & 0 & \cdots & 0 & 0 & 0 & 1 & 0 & 0 & 0 & 0 & 0 &\cdots \\
  0 & 0 & 0 & 0 & 0 & \cdots & 0 & 0 & 0 & 0 & 1 & 0 & 0 & 0 & 0 &\cdots \\
  \cline{1-16}
   1 & 0 & -\lambda_{1,n+1} & 0 & 0 & \cdots & 0 & 0 & 0 & 0 & 0 & -\lambda_{23} & 0 & 0 & 0 &\cdots \\
    0 & 1 & 0 & -\lambda_{1,n+1} & 0 & \cdots & 0 & 0 & 0 & 0 & 0 & -\lambda_{24} & 0 & 0 & 0 &\cdots \\
    0 & 0 & 0 & 0 & 1 & \cdots & 0 & 0 & 0 & 0 & 0 & -\lambda_{25} & 0 & 0 & 0 &\cdots \\
     \vdots & \vdots & \vdots & \vdots &  & \ddots &  & 0 & 0 & 0 & 0 & \vdots & \vdots & \vdots & \vdots & \\
      0 & 0 & 0 & 0 & 0 & \cdots & 1 & 0 & 0 & 0 & 0 & -\lambda_{2n} & 0 & 0 & 0 &\cdots \\
\end{block}
\end{blockarray}
\end{equation}

It is a matrix with $n+6$ rows and $4n-4$ columns.
Note that here we discarded the $\lambda_{25},\ldots, \lambda_{2n}$ columns since they do not affect the rank computation. Also, when $n=4$, the columns 5 through $n$-th should be deleted.

The rows of the Jacobian matrix span the cotangent space at a point $p$ on the underlying reduced scheme of the intersection $\textup{Bl}_{\Delta_F}(F\times F)\cap \tilde{D}$.

\begin{proposition}\label{prop_Jrank-2nd-type}
   The Jacobian matrix \eqref{eqn_generalJacobian1} has rank at least $n+4$ and at most $n+5$. 
\end{proposition}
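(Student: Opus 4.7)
My plan is to prove the two bounds by analyzing the block structure of the Jacobian \eqref{eqn_generalJacobian1}.

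For the lower bound, I will identify $n+4$ manifestly independent rows. Block 2 (rows 5--8) consists of the standard basis vectors in the columns $x_{13}, x_{14}, x_{23}, x_{24}$. Using these to eliminate the $*$ entries of block 1 in those same columns, rows 1--4 become the standard basis vectors in the $\lambda_{13}, \lambda_{14}, \lambda_{23}, \lambda_{24}$ columns. Finally, the rows of block 3 with index $i = 5, \ldots, n$ have leading $1$'s in the columns $\lambda_{15}, \ldots, \lambda_{1n}$, contributing $n-4$ further independent directions. This yields rank at least $4 + 4 + (n-4) = n+4$.

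For the upper bound, since the matrix has $n+6$ rows, I need to produce at least one linear relation. The plan is to reduce the two remaining rows of block 3, namely $R_9$ (for $i=3$) and $R_{10}$ (for $i=4$), modulo the $n+4$ independent rows identified above, and to show that the resulting vectors are proportional. Concretely, I set $\tilde R_9 = R_9 - R_1 + \lambda_{1,n+1} R_3$ (further cleaned by block 2 to kill the $x_{13}, x_{14}, x_{23}, x_{24}$ columns), and define $\tilde R_{10}$ analogously using $R_2$ and $R_4$. On the fiber $V_{n-2}$ where $\lambda_{13} = \lambda_{14} = \lambda_{23} = \lambda_{24} = 0$, the $\lambda_{1,n+1}$-entry of both reduced rows vanishes, so $\tilde R_9$ and $\tilde R_{10}$ live in the span of the $u$-column together with the columns $x_{1k}, x_{2k}$ for $k \ge 5$.

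The main obstacle is to verify the proportionality $\tilde R_{10} = -\lambda_{1,n+1}^2\, \tilde R_9$ at every point of the fiber. Since each $\phi_{ij}$ is a quadratic form, the entries of the reduced rows can be computed from the formula $\bar h_{ij} = 2\, x^T A^{(ij)} \lambda + u\, \phi_{ij}(\lambda)$ modulo higher order in $x$, where $A^{(ij)}$ is the symmetric matrix of $\phi_{ij}$. By expanding the cubic \eqref{eqn_cubicnfold@2ndtype} along a nearby line, one reads off explicit formulas for $\phi_{13}, \phi_{14}, \phi_{23}, \phi_{24}$ in terms of the coefficients $\alpha_{kl}, \beta_{kl}$ of $L_{kl}(x_0,x_1) = \alpha_{kl} x_0 + \beta_{kl} x_1$. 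Substituting the fiber relation $\lambda_{1k} = \lambda_{1,n+1} \lambda_{2k}$ for $k \ge 5$ and organising the computation by the degree in $\lambda_{1,n+1}$, one sees that $\tilde R_9$ depends only on $\phi_{13}, \phi_{23}$ (producing contributions of weight $0$ and $1$ in $\lambda_{1,n+1}$), while $\tilde R_{10}$ depends only on $\phi_{14}, \phi_{24}$ (producing weights $2$ and $3$); the cross-terms involving $\alpha_{kl}$ in the $x_{kl}$-entries cancel cleanly, leaving the uniform factor $-\lambda_{1,n+1}^2$. The tedious part is the bookkeeping of these cancellations and the sign conventions coming from the Pl\"ucker-versus-projective index shift, but once the pattern is in view the relation is transparent and yields rank $\le n+5$.
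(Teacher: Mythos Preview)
Your lower bound is identical to the paper's. For the upper bound, however, the paper gives a one-line geometric argument: the rank of the Jacobian at any point $p$ is $\dim \Bl_\Delta(Gr(2,n+1)^2) - \dim T_p\tilde D_F$, and since $\dim T_p\tilde D_F \ge \dim \tilde D_F$, this is bounded above by the codimension of $\tilde D_F$, which is $n+5$ by Corollary~\ref{cor_codimDF} and Proposition~\ref{prop_DFtilde-fiber}. No computation is needed.

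Your route instead constructs an explicit linear relation between the reduced rows $\tilde R_9$ and $\tilde R_{10}$ on the fiber. The idea is valid, but two details in your sketch are reversed: carrying out the substitution $\lambda_{1,i+1}=\lambda_{1,n+1}\lambda_{2,i+1}$ one finds that it is $\tilde R_9$, not $\tilde R_{10}$, that acquires the extra factor of $\lambda_{1,n+1}^2$ after the cancellations, so the relation reads $\tilde R_9 = -\lambda_{1,n+1}^2\,\tilde R_{10}$; correspondingly your weight assignment for the two rows is swapped. These slips are harmless for the rank bound, since any nontrivial proportionality yields rank $\le n+5$. In fact, your computation is essentially what the paper carries out later in Section~\ref{sec_SolveEquations} to determine \emph{when} the rank drops to $n+4$: doing it here front-loads that analysis and makes the upper bound self-contained, whereas the paper's argument is instantaneous but leans on the dimension of $\tilde D_F$ having already been established.
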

\begin{proof}
The first eight rows and the last $n-4$ rows are clearly linearly independent. Their span has dimension $n+4$ and is contained in the cotangent space.
The rank of the matrix is at most the codimension of the intersection $\tilde{D}_F$ in $\Bl_{\Delta}(Gr(2,n+1)^2)$, which is at most $n+5$ (cf. Corollary \ref{cor_codimDF}, Proposition \ref{prop_DFtilde-fiber}).
\end{proof}

\subsubsection{Non-transversal intersection}
From another point of view, the equations \eqref{eqn_cubicnfold1}, \eqref{eqn_cubicnfold2} and the last $n-4$ equations in \eqref{eqn_cubicnfold3} has linearly independent tangent vectors, and therefore they intersect transversely at $u=x_{ij}=0$. As a result, the variety they define is smooth and has codimension equal to the number of equations.

However, upon adding the two additional equations
$$\lambda_{13}-\lambda_{1,n+1}\lambda_{23}=0\ \textup{and}\  \lambda_{14}-\lambda_{1,n+1}\lambda_{24}=0,$$
the intersection is no longer transverse — the dimension of variety decreases only by 1 (this already occurs before the blow-up, as observed in Remark \ref{remark_nontransverse}). Therefore the Jacobian matrix has rank at most $n+5$ and cannot attain full row rank.

Meanwhile, the dimension of the tangent space decreases by either $1$ or $0$. In the former case, the variety defined by \eqref{eqn_cubicnfold1}, \eqref{eqn_cubicnfold2}, and \eqref{eqn_cubicnfold3} remains smooth locally, and the scheme structure is reduced (cf. \cite[Lem. 5.1]{Li}). In the latter case, the underlying variety is singular, and the scheme may have a non-reduced structure. Therefore, we need to analyze when the second case occurs. This depends on the second-order data.

Before proceeding with the row reduction computation, let us mention a similar setup to compute Jacobian matrix over a line of the first type, which turns out to be completely determined by the first-order data.

\subsubsection{Line of the 1st type}
We can also analyze the Jacobian matrix for $\textup{Bl}_{\Delta_F}(F\times F)\cap \tilde{D}$ above a line of the first type on the diagonal. Similarly, equations \eqref{eqn_F1sttype}, the pullback of diagonal equations \eqref{eqn_F1sttypeU-linear}, and the last $n-5$ equations of \eqref{eqn_cubicnfold3} intersect transversely. The remaining three equations 
$$\lambda_{1i}-\lambda_{1,n+1}\lambda_{2i}=0,\ \textup{with} \ i=3,4,5$$
further cut down the dimension of the variety by $2$. It suffices to understand how the normal vector associated with the above three polynomials linearly depends on the normal vectors associated with the pullback of diagonal equations \eqref{eqn_F1sttypeU-linear}.
The direct calculation shows that the relevant variables are $\lambda_{13},\lambda_{14}, \lambda_{15}$, and $\lambda_{23},\lambda_{24}, \lambda_{25}$, and the corresponding 7 by 6 matrix always has the maximal rank. Hence, the whole Jacobian matrix always has the expected rank.
\begin{proposition}\label{prop_Jrank-1st-type}
    The Jacobian matrix at point of $\textup{Bl}_{\Delta_F}(F\times F)\cap \tilde{D}$ over a line of the first type over the diagonal always has expected rank $n+5$.
\end{proposition}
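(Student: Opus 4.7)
The plan is to compute the Jacobian of the defining equations \eqref{eqn_cubicnfold1}--\eqref{eqn_cubicnfold3} at a point over a line of the first type on the diagonal, using the first-type local equations \eqref{eqn_F1sttype} for $F$ and their pullback \eqref{eqn_F1sttypeU-linear} in place of the second-type analogs. The Jacobian splits into three blocks: (i) four rows from \eqref{eqn_F1sttype} with leading linear parts $x_{13},\,x_{14}+x_{23},\,x_{15}+x_{24},\,x_{25}$, supported entirely in the $x_{ij}$-columns and visibly of rank $4$; (ii) four rows from the pullback \eqref{eqn_F1sttypeU-linear} (after substituting $u_{ij}=\lambda_{ij}u$ and dividing by $u$), with leading linear parts $\lambda_{13},\,\lambda_{14}+\lambda_{23},\,\lambda_{15}+\lambda_{24},\,\lambda_{25}$; and (iii) the $n-2$ rows from \eqref{eqn_cubicnfold3}, namely the differentials of $\lambda_{1i}-\lambda_{1,n+1}\lambda_{2i}$ for $i=3,\ldots,n$.

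By Proposition~\ref{prop_DFtilde-fiber}, every point on the reduced fiber over a line of the first type satisfies $\lambda_{1i}=\lambda_{2i}=0$ for $i=3,4,5$, so that the coefficients $-\lambda_{2i}$ of $d\lambda_{1,n+1}$ in the three rows of (iii) with $i=3,4,5$ vanish. The essential step is to show that the $7\times 6$ block $M(t)$ formed from the four rows of (ii) and the three rows of (iii) with $i=3,4,5$---regarded as linear forms in $d\lambda_{13}, d\lambda_{14}, d\lambda_{15}, d\lambda_{23}, d\lambda_{24}, d\lambda_{25}$ with scalar parameter $t := \lambda_{1,n+1}$---has rank $6$ for every value of $t$. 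Labeling the columns by the standard basis $e_1,\ldots,e_6$, the row space of $M(t)$ contains $e_1, e_6$ and $e_3 + e_5$ from (ii); subtracting $e_3 - t\,e_6$ from (iii) yields $e_5 + t\,e_6$, which combined with $e_6$ produces $e_5$; then $e_2 + e_4$ from (ii) combined with $e_2 - t\,e_5$ from (iii) produces $e_4 + t\,e_5$, hence $e_4$; finally $e_2$ and $e_3$ follow. The argument is uniform in $t$, so $\mathrm{rank}\,M(t) = 6$ always, with no case split on whether $t = 0$.

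The remaining $n-5$ rows of (iii) (those with $i \geq 6$) each contribute a $1$ in a distinct $d\lambda_{1i}$-column that appears nowhere else in the Jacobian, hence they are independent from $M(t)$ and from one another. Combining with the rank-$4$ block (i), the total Jacobian rank is $4 + 6 + (n - 5) = n + 5$, which matches the expected codimension of $\tilde{D}_F$ in $\textup{Bl}_{\Delta}(Gr(2,n+1)^2)$ by Corollary~\ref{cor_codimDF}. The main (mild) obstacle is just the bookkeeping of which linearized combinations of rows of (ii) and (iii) surject onto the six-dimensional cotangent space of the fiber; once this is set up, no second-order data of the cubic enters, in sharp contrast with the second-type analysis. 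This reflects the fact that the splitting type of $N_{L|X}$ is rigid at a line of the first type, so the rank computation is purely linear and holds for every first-type line on any smooth cubic hypersurface without genericity assumptions.
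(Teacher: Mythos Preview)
Your proposal is correct and follows essentially the same approach as the paper's own argument. The paper's proof is a sketch: it asserts that the four pullback equations \eqref{eqn_F1sttypeU-linear} together with the three equations $\lambda_{1i}-\lambda_{1,n+1}\lambda_{2i}=0$ for $i=3,4,5$ yield a $7\times 6$ block in the variables $\lambda_{13},\lambda_{14},\lambda_{15},\lambda_{23},\lambda_{24},\lambda_{25}$ that ``always has the maximal rank,'' and concludes from this that the full Jacobian has rank $n+5$. You supply precisely the row-reduction that the paper omits, verifying that $M(t)$ has rank $6$ uniformly in $t=\lambda_{1,n+1}$, and you correctly invoke Proposition~\ref{prop_DFtilde-fiber} to justify $\lambda_{1i}=\lambda_{2i}=0$ for $i=3,4,5$ at every relevant point. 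One small remark: your computation establishes rank $\geq n+5$; the matching upper bound comes from the fact that the strict transform of $D_F$ (of dimension $3n-9$, cf.\ Corollary~\ref{cor_codimDF}) passes through each such point, so the tangent space cannot have dimension below $3n-9$---this is implicit in both your argument and the paper's, and is what makes ``expected rank'' the exact rank.
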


Let us highlight a direct consequence of what we know about the Jacobian matrix so far.
\subsubsection{Irreducibility} 
\begin{lemma}\label{lemma_irreducible}
    The scheme-theoretical intersection $\tilde{D}_F=\Bl_{\Delta_F}(F\times F)\cap \tilde{D}$ is irreducible. 
\end{lemma}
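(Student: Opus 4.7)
The plan is to combine the dimension information from the earlier propositions with the general fact that a scheme whose irreducible open subset is known to be irreducible is itself irreducible provided no extra components sit entirely in the complement.

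First I would note that away from the diagonal, the scheme $\tilde{D}_F$ coincides with the locally closed subvariety $D_F \setminus \Delta_F$, which is irreducible of dimension $3n-9$ by Proposition \ref{prop_typeIII} and Corollary \ref{cor_codimDF}. Denote by $Z_0$ the closure in $\tilde{D}_F$ of $D_F \setminus \Delta_F$; it is automatically an irreducible component of the expected dimension $3n-9$. The claim is that $Z_0$ is the only component, so it suffices to rule out any component supported entirely in the preimage of the diagonal.

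Next I would apply the Jacobian bound. Propositions \ref{prop_Jrank-2nd-type} and \ref{prop_Jrank-1st-type} give, for every point $p \in \tilde{D}_F$, the estimate $\mathrm{rank}(J_p) \leq n+5$; since $\Bl_{\Delta}\bigl(Gr(2,n+1)^2\bigr)$ has dimension $4n-4$, this forces
\[
\dim_p \tilde{D}_F \;\geq\; (4n-4)-(n+5) \;=\; 3n-9
\]
at every $p$, and in particular every irreducible component has dimension at least $3n-9$.

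Finally I would compare this lower bound with the dimension of the exceptional part. By Proposition \ref{prop_DFtilde-fiber}, the restriction $(\tilde{D}_F)_{\mathrm{red}}|_{\Delta_F}$ is a $V_{n-3}$-bundle over the open locus $\Delta_F \setminus \Delta(F_2)$ of lines of the first type (dimension $(2n-6)+(n-4) = 3n-10$) and a $V_{n-2}$-bundle over $\Delta(F_2)$ (dimension $(n-3)+(n-3) = 2n-6$, using Lemma \ref{lemma_2ndtypedim}). Both numbers are strictly smaller than $3n-9$, so the exceptional locus cannot support any irreducible component of $\tilde{D}_F$; every component must meet, and therefore must equal, $Z_0$. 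This yields irreducibility. The only step that requires any care is making sure the dimension count on the second-type stratum really is $2n-6$ rather than something larger — i.e., that $\Delta(F_2)$ has dimension exactly $n-3$ on every component — and this is exactly the content of Lemma \ref{lemma_2ndtypedim}, so there is no real obstacle.
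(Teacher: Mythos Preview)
Your overall strategy matches the paper's: identify the strict transform $Z_0$ of $D_F$ as one irreducible component and then rule out any further component contained in the exceptional locus by a dimension comparison. The organization differs slightly---the paper first uses Proposition~\ref{prop_Jrank-1st-type} to argue that over first-type lines the scheme is smooth of the expected dimension, so any extra component $D'$ must live over second-type lines, and only then invokes the dimension $2n-6$ of that stratum---but the essential content is the same.

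There is, however, a genuine gap in your second paragraph. From $\mathrm{rank}(J_p)\le n+5$ you conclude $\dim_p\tilde D_F\ge 3n-9$, but this inference goes the wrong way: an upper bound on the Jacobian rank yields a \emph{lower} bound on $\dim T_p\tilde D_F$, and since $\dim T_p\tilde D_F\ge \dim_p\tilde D_F$ this places no constraint on $\dim_p$ from below. In particular it does not by itself force every irreducible component to have dimension $\ge 3n-9$. (The paper's own final sentence---comparing $\mathrm{codim}(D')\ge 2n+2$ to ``the rank of the Jacobian''---has the same imprecision.) The correct way to handle the first-type stratum is exactly what the paper does: there the Jacobian has rank \emph{equal} to $n+5$, which is the codimension of the known component $Z_0$, so $\tilde D_F$ is smooth and hence locally irreducible at those points; no second component can pass through. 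For the lower bound on component dimension over the second-type stratum, one should appeal to Krull's height theorem: the intersection is cut out by $n+6$ equations in the $(4n-4)$-dimensional ambient (or, as in the proof of Proposition~\ref{prop_cubicnfold_reduced}, by two equations inside a smooth $(3n-8)$-fold), so every component has dimension at least $3n-10$. For $n\ge 5$ this already exceeds $2n-6$ and the argument closes; the case $n=4$ requires a bit of extra care.
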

\begin{proof}
 First, the strict transform of an incidental variety $D_F$ is an irreducible component of the intersection $\Bl_{\Delta_F}(F\times F)\cap \tilde{D}$. Suppose that there is a second component, say $D'$. Then, it must be supported on the exceptional locus over the diagonal, and its intersection with the strict transform is a singular point. By Proposition \ref{prop_Jrank-1st-type}, it is not supported over lines of the first type. Hence, it has to be supported on the locus above the lines of the second type. However, by Lemma \ref{lemma_2ndtypedim} and Proposition \ref{prop_DFtilde-fiber}, such locus has dimension 
 $$\dim(F_2)+\dim(V_{n-2})=(n-3)+(n-3)=2n-6.$$
 Hence, the codimension of $D'$ is at least $\dim(\Bl_{\Delta}Gr(2,n+1)^2)-(2n-6)=2n+2$, which exceeds $n+5$, the rank of the Jacobian matrix (cf. Proposition \ref{prop_Jrank-2nd-type}), which is a contradiction.
\end{proof}

\subsubsection{Row Reductions}
Now we come back to the Jacobian matrix \eqref{eqn_generalJacobian1} for a line of the second type, and we want to determine when its rank drops. This reduces to understanding the linear relation between the 9th and 10th row of the Jacobian matrix \eqref{eqn_generalJacobian1} and the first four rows. By subtracting the first row from the 9th row and plus $\lambda_{1,n+1}$ times the third row, we can eliminate the left-hand-side entries of the 9th row. A similar process can be applied to the 10th row. Thus, the condition for a rank drop is determined by the vanishing of the remaining entries in the 9th and 10th rows.

\begin{proposition}
 The Jacobian matrix \eqref{eqn_generalJacobian1} has (lower) rank $n+4$ if and only if
\begin{equation}\label{eqn_Jacobian1}
    \lambda_{23}=\lambda_{24}=0,
\end{equation}
together with 
\begin{equation}
\begin{cases}\label{eqn_Jacobian2}
    \lambda_{1,n+1}\frac{\partial \bar{h}_{2j}}{\partial u}-\frac{\partial \bar{h}_{1j}}{\partial u}=0,\\
    \lambda_{1,n+1}\frac{\partial \bar{h}_{2j}}{\partial x_{i,k+1}}-\frac{\partial \bar{h}_{1j}}{\partial x_{i,k+1}}=0,
\end{cases}
\end{equation}
for $i=1,2$, $j=3,4$, and $4\le k\le n$.
All expressions are interpreted as being evaluated $u=x_{ij}=0$.
\end{proposition}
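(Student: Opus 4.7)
The plan is to carry out an explicit row reduction on the Jacobian matrix \eqref{eqn_generalJacobian1}, exploiting the fact that its first eight rows and last $n-4$ rows already form a block of $n+4$ linearly independent vectors with an evident pivot structure. Since by Proposition \ref{prop_Jrank-2nd-type} the rank is either $n+4$ or $n+5$, the question collapses to deciding when rows $9$ and $10$ contribute anything new modulo the span of the other $n+4$ rows.

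First I would reduce row $9$. Rows $1$--$4$ pivot on the $\lambda_{13},\lambda_{14},\lambda_{23},\lambda_{24}$ columns, so subtracting row $1$ kills the leading $1$ of row $9$, and then adding $\lambda_{1,n+1}$ times row $3$ kills the $-\lambda_{1,n+1}$ entry in the $\lambda_{23}$ column. This replaces row $9$ by a vector whose $\lambda_{ab}$-entries (for $ab \in\{13,14,23,24\}$) are zero, whose $x_{13},x_{14},x_{23},x_{24}$ entries are pulled from the first and third rows, and whose $u, x_{15}, x_{25}, \ldots$ entries have become
\[
-\tfrac{\partial \bar h_{13}}{\partial u}+\lambda_{1,n+1}\tfrac{\partial \bar h_{23}}{\partial u},\quad
-\tfrac{\partial \bar h_{13}}{\partial x_{i,k+1}}+\lambda_{1,n+1}\tfrac{\partial \bar h_{23}}{\partial x_{i,k+1}},
\]
along with $-\lambda_{23}$ in the $\lambda_{1,n+1}$ column. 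Next, I would use rows $5$--$8$, which pivot on $x_{13},x_{14},x_{23},x_{24}$, to clear those four entries; this is harmless because rows $5$--$8$ have zeros in every other displayed column. Rows $11$ through $n+6$ pivot on $\lambda_{15},\ldots,\lambda_{1n}$, and the original row $9$ has no entry in those columns, so they are not needed. After these operations row $9$ has support only in the $\lambda_{1,n+1}$, $u$, and $x_{i,k+1}$ columns, with the three types of entries listed above. The same recipe applied symmetrically to row $10$ (using rows $2$, $4$, $6$, $8$) produces the analogous reduced vector involving $\bar h_{14}$, $\bar h_{24}$, and $-\lambda_{24}$.

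Finally, I would read off the rank drop: the matrix has rank $n+4$ iff both reduced vectors are the zero vector, which is exactly the conjunction of $\lambda_{23}=\lambda_{24}=0$ (vanishing in the $\lambda_{1,n+1}$-column) with the system \eqref{eqn_Jacobian2} (vanishing in the $u$- and $x_{i,k+1}$-columns). Otherwise, one of those two rows is nonzero and is independent of the pivot block (since it lives entirely in columns not touched by the pivots), so the rank is $n+5$, the maximum allowed by Proposition \ref{prop_Jrank-2nd-type}.

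The only subtle point — which I would want to double-check explicitly but do not expect to be a real obstacle — is that the clearing steps for the $x_{13},x_{14},x_{23},x_{24}$ columns using rows $5$--$8$ do not disturb the $u$- or $x_{i,k+1}$-column entries of the reduced rows $9$ and $10$; this is immediate from the shape of rows $5$--$8$, which are standard basis vectors in the displayed range. Once this is verified, the equivalence of the rank-drop condition with \eqref{eqn_Jacobian1}--\eqref{eqn_Jacobian2} is a direct consequence of the reduction.
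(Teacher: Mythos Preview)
Your proposal is correct and follows essentially the same approach as the paper. The paper's own argument (given in the paragraph immediately preceding the proposition rather than in a formal proof environment) is the identical row reduction: subtract row~1 from row~9, add $\lambda_{1,n+1}$ times row~3, do the analogous operations on row~10, and read off the vanishing conditions for the remaining entries; your write-up simply spells out the bookkeeping with rows $5$--$8$ and checks the pivot structure more carefully than the paper does.
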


\subsection{Second order data}
To solve the above equations, we need to work out the computations based on the equations of $\bar{h}_{ij}$. In light of Lemma \ref{Lemma_divisiblebyu} and \eqref{eqn_hijbarterm}, this amounts to understanding the second-order terms of $\phi_{ij}$ in \eqref{eqn_cubicnfold2}. Let's find the explicit equations for $\phi_{ij}$.

Recall that if $L$ is a line of the second type in $X$, then by changing the coordinates we can assume $L$ is defined by equation $x_2=\cdots=x_{n}=0$ and the cubic hypersurface $X$ as \eqref{eqn_cubicnfold@2ndtype}

\begin{equation}
   F(x_0,\ldots,x_n)=x_2x_0^2+x_3x_1^2+\sum_{2\le i,j\le n}x_ix_jL_{ij}(x_0,x_1)+C(x_2,\cdots,x_n),
\end{equation}
where $L_{ij}(x_0,x_1)=L_{ji}(x_0,x_1)$ is a linear homogeneous polynomial, and $C(x_2,\ldots,x_n)$ is a homogeneous cubic.

In the Plucker coordinates, recall we choose the affine chart $x_{12}=1$, and the coordinates $(x_{13},x_{14},\ldots, x_{1,n+1},x_{23},x_{24},\ldots, x_{2,n+1})$ corresponds to the $\C^2$ spanned by row vectors of the matrix
$$
\begin{bmatrix}
    1&0&x_{13}&\ldots &x_{1,n+1}\\
     0&1&x_{23}&\ldots&x_{2,n+1}\\
\end{bmatrix}.
$$

Therefore, for a line $L'$ to be contained in $X$ and near $L$, it satisfies the equations 
$$F(\lambda[1:0:x_{13}:\ldots:x_{1,n+1}]+\mu[0:1:x_{23}:\ldots:x_{2,n+1}])=0$$
for all $[\lambda:\mu]\in \mathbb P^1$. Use \eqref{eqn_cubicnfold@2ndtype}, the above equation gives
\begin{align*}
0&=\lambda^2(\lambda x_{13}+\mu x_{23})+\mu^2(\lambda x_{14}+\mu x_{24})\\
&+\sum_{2\le i,j\le n+1}(\lambda x_{1,i+1}+\mu x_{2,i+1})(\lambda x_{1,j+1}+\mu x_{2,j+1})L_{ij}(\lambda,\mu)+\textup{third order terms.}
\end{align*}

 Expanding in terms of $\lambda^3,\lambda^2\mu,\lambda\mu^2,\mu^3$, we get four equations as in \eqref{eqn_F2ndtype}, we write the second-order terms below.

\begin{align}\label{eqn_phiij}
\phi_{13}&=\sum_{4\le i,j\le n}a_{ij}^0x_{1,i+1}x_{1,j+1},\nonumber \\
\phi_{23}&=\sum_{4\le i,j\le n}a^1_{ij}x_{1,i+1}x_{1,j+1}+\sum_{4\le i, j\le n}a^0_{ij}(x_{1,i+1}x_{2,j+1}+x_{1,j+1}x_{2,i+1}), \nonumber \\
\phi_{14}&=\sum_{4\le i,j\le n}a^0_{ij}x_{2,i+1}x_{2,j+1}+\sum_{4\le i,j\le n}a^1_{ij}(x_{1,j+1}x_{2,i+1}+x_{1,i+1}x_{2,j+1}),\\
\phi_{24}&=
\sum_{4\le i, j\le n}a_{ij}^1x_{2,i+1}x_{2,j+1}.\nonumber 
\end{align}

Here $L_{ij}(x_0,x_1)=a_{ij}^0x_0+a_{ij}^1x_1$. Note that for convenience, we drop the third-order term because they do not affect the computation of the tangent space, and we drop the terms that involve $x_{13},x_{14},x_{23}$ and $x_{24}$ for the reason below.

\subsection{Solving the equations}\label{sec_SolveEquations}
Now we look at the conditions given by equations \eqref{eqn_Jacobian1} and \eqref{eqn_Jacobian2}.

First, using \eqref{eqn_cubicnfold3} and $\lambda_{23}=\lambda_{24}=0$, we obtain $\lambda_{13}=\lambda_{14}=0$ in the blow-up coordinates. So, the terms in $\phi_{ij}$ that involve $x_{13},x_{14},x_{23}$ and $x_{24}$ are irrelevant.

Second, note $\phi_{24}$ does not depend on $x_{1j}$, so in particular, $\frac{\partial \bar{h}_{24}}{\partial x_{1j}}=0$, and the equations $\lambda_{1,n+1}\frac{\partial \bar{h}_{24}}{\partial x_{1,k+1}}-\frac{\partial \bar{h}_{14}}{\partial x_{1,k+1}}=0$  from \eqref{eqn_Jacobian2}  reduces to
\begin{equation}\label{eqn_2nequtions1}
    \frac{\partial \bar{h}_{14}}{\partial x_{1,k+1}}|_{u=x_{ij}=0}=0,\ k=4,\ldots, n
\end{equation}
Use \eqref{eqn_hijbarterm} and \eqref{eqn_phiij}, we find 
$$\bar{h}_{14}=\sum_{4\le i,j\le n}a_{ij}^0(2x_{2,i+1}\lambda_{2,j+1}+u\lambda_{2,i+1}\lambda_{2,j+1})+2a_{ij}^1(x_{1,j+1}\lambda_{2,i+1}+x_{2,i+1}\lambda_{1,j+1}+u\lambda_{1,i+1}\lambda_{2,j+1}).$$

Hence \eqref{eqn_2nequtions1} reduces to
\begin{equation}\label{eqn_2nequtions2}
   2\sum_{4\le i\le n}a^1_{ik}\lambda_{2,i+1}=0,\ k=4,\ldots, n
\end{equation}


Similarly, $\phi_{13}$ does not depend on $x_{2j}$, so in particular, $\frac{\partial \bar{h}_{13}}{\partial x_{2j}}=0$, and the equations $ \lambda_{1,n+1}\frac{\partial \bar{h}_{23}}{\partial x_{2,k+1}}-\frac{\partial \bar{h}_{13}}{\partial x_{2,k+1}}$ from \eqref{eqn_Jacobian2} reduces to 
\begin{equation}\label{eqn_2nequtions3}
    \lambda_{1,n+1}\frac{\partial \bar{h}_{23}}{\partial x_{2,k+1}}|_{u=x_{ij}=0}=0,\ k=4,\ldots, n
\end{equation}
By a similar computation, we find
\begin{equation}\label{eqn_2nequtions4}
  2\lambda_{1,n+1}\sum_{4\le i\le n}a^0_{ik}\lambda_{1,i+1}=0,\ k=4,\ldots, n
\end{equation}

Now, use equations \eqref{eqn_cubicnfold3} and assume that $\lambda_{1,n+1}\neq 0$, the equations \eqref{eqn_2nequtions2} and \eqref{eqn_2nequtions4} become 
\begin{equation}\label{eqn_2nequtions5}
    \begin{cases}
       \sum_{4\le i\le n}a^1_{ik}\lambda_{2,i+1}=0\\
        \sum_{4\le i\le n}a^0_{ik}\lambda_{2,i+1}=0\\
    \end{cases}
\end{equation}
for all $k=4, \ldots, n$.

In addition, equations $\lambda_{1,n+1}\frac{\partial \bar{h}_{24}}{\partial x_{2,k+1}}-\frac{\partial \bar{h}_{14}}{\partial x_{2,k+1}}=0$ from \eqref{eqn_Jacobian2} reduce to

\begin{equation}\label{eqn_2nequtions8}
 \lambda_{1,n+1}(2\sum_{4\le i\le n}a^1_{ik}\lambda_{2,i+1})-(2\sum_{4\le i\le n}a^0_{ik}\lambda_{2,i+1}+2\sum_{4\le i\le n}a^1_{ik}\lambda_{1,i+1})=0,\ k=4,\ldots, n.
\end{equation}
Together with \eqref{eqn_cubicnfold3} and \eqref{eqn_2nequtions2}, they implies \eqref{eqn_2nequtions5} under the assumption of $\lambda_{1,n+1}=0$.

One can check the rest of the equations in \eqref{eqn_Jacobian2} are dependent on the equations above and do not provide new relations.

In summary, our computation shows that

\begin{proposition}\label{prop_Jmatrixdegenerate}
    The Jacobian matrix \eqref{eqn_generalJacobian1} at point of $\textup{Bl}_{\Delta_F}(F\times F)\cap \tilde{D}$ over a line of the second type on the diagonal has lower rank $n+4$ if and only if the two symmetric matrices
\begin{equation}\label{eqn_2nequtions6}
A^p:=\begin{bmatrix}
    a_{44}^p & a_{45}^p &\cdots& a_{4n}^p\\
    a_{54}^p & a_{55}^p &\cdots& a_{5n}^p\\
    \vdots & \vdots &\cdots& \vdots\\
    a_{n4}^p & a_{n5}^p &\cdots& a_{nn}^p
\end{bmatrix},\ p=0,1 
\end{equation}
are degenerate, and the kernel of the two matrices has a nontrivial intersection, which is equivalent to the matrix of linear forms \eqref{eqn_Lmatrix} 
$$S=A^0x_0+A^1x_1$$
being degenerate.
\end{proposition}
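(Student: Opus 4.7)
The plan is to determine exactly when the Jacobian matrix \eqref{eqn_generalJacobian1}, which has rank at most $n+5$ by Proposition \ref{prop_Jrank-2nd-type}, drops rank to $n+4$. Rows 1--8 and the last $n-4$ rows are manifestly linearly independent thanks to the identity blocks visible in columns $\lambda_{13},\lambda_{14},\lambda_{23},\lambda_{24}$, then $x_{13},x_{14},x_{23},x_{24}$, then $\lambda_{15},\ldots,\lambda_{1n}$. Together these contribute a subspace of dimension $n+4$, so the entire question reduces to analyzing when rows 9 and 10 lie in the span of the others.

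I would proceed by explicit row reduction. Subtracting row 1 from row 9 and adding $\lambda_{1,n+1}$ times row 3 clears the $\lambda_{13}$ and $\lambda_{23}$ entries on the left; an analogous operation on row 10 using rows 2 and 4 handles $\lambda_{14},\lambda_{24}$. The rank drops by one precisely when all residual entries of rows 9 and 10 vanish, which reads off as the two systems \eqref{eqn_Jacobian1} and \eqref{eqn_Jacobian2}. The first forces $\lambda_{23}=\lambda_{24}=0$, and combined with the exceptional divisor relations \eqref{eqn_cubicnfold3} this also forces $\lambda_{13}=\lambda_{14}=0$; in turn this allows dropping all $x_{13},x_{14},x_{23},x_{24}$ contributions to $\phi_{ij}$ in \eqref{eqn_phiij}.

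The remaining conditions \eqref{eqn_Jacobian2} I would unpack using the explicit second-order expansions \eqref{eqn_phiij} of $\phi_{ij}$ derived from the normal form \eqref{eqn_cubicnfold@2ndtype}, together with the blow-up substitution \eqref{eqn_hijbarterm} that computes $\bar{h}_{ij}$ from $\phi_{ij}$. Because $\phi_{13}$ is a quadratic form in the $x_{1,\bullet}$ variables alone and $\phi_{24}$ in the $x_{2,\bullet}$ alone, many partial derivatives vanish; after evaluating at $u=x_{ij}=0$ the conditions collapse to the two linear systems
$$\sum_{4\le i\le n} a^p_{ik}\,\lambda_{2,i+1}=0,\qquad k=4,\ldots,n,\ p=0,1,$$
on the vector $\vec{\lambda}:=(\lambda_{25},\ldots,\lambda_{2,n+1})$, which are exactly $A^p\vec{\lambda}=0$. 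The existence of a nonzero simultaneous solution is equivalent to linear dependence over $\mathbb{C}$ of the columns of the matrix of linear forms $S=A^0x_0+A^1x_1$, which is the degeneracy condition of Definition \ref{def_htL}.

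The main bookkeeping obstacle is tracking how the blow-up substitution $u_{ij}=\lambda_{ij}u$ transforms the second-order terms of \eqref{eqn_F2ndtype} into terms of $\bar{h}_{ij}$ of the shape \eqref{eqn_hijbarterm}, and then evaluating the derivatives correctly at $u=x_{ij}=0$. A subtle point is that the row reduction above naively requires $\lambda_{1,n+1}\ne 0$; one must check separately that the case $\lambda_{1,n+1}=0$ (handled via the auxiliary equations $\lambda_{1,n+1}\,\partial_{x_{2,k+1}}\bar{h}_{24}-\partial_{x_{2,k+1}}\bar{h}_{14}=0$) produces the same system and no additional constraints. Since every step in the reduction is reversible, the resulting iff characterization is immediate.
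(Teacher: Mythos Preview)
Your proposal is correct and follows essentially the same route as the paper: the same row reduction on rows 9 and 10 against rows 1--4, the same derivation of conditions \eqref{eqn_Jacobian1}--\eqref{eqn_Jacobian2}, the same exploitation of the special shape of $\phi_{13}$ and $\phi_{24}$ in \eqref{eqn_phiij} to collapse the constraints to $A^p\vec{\lambda}=0$, and the same handling of the $\lambda_{1,n+1}=0$ case via the auxiliary equation \eqref{eqn_2nequtions8}.
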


When $n=4$ and $X$ is cubic threefold, this is exactly $$a_{44}^0=a_{44}^1=0,$$ 
and equivalent to $L$ being a triple line on $X$ (cf. Definition \ref{def_tripleline}). 

\begin{remark}\normalfont\label{remark_pv}
Suppose $v=[\lambda_{25},\ldots,\lambda_{2,n+1}]^T$ is a non-zero vector that lies in the common kernel of $A^p$. Then $Sv=0$ and it corresponds to a point $p_v\in \mathbb P^n$ "at infinity" with coordinates $x_i=0$ for $0\le i\le 3$. The span of $p_v$ and $L$ determines a plane $P^2(v)$. If $P^2(v)$ is not contained in $X$, then it makes $L$ a triple line — $P^2(v)\cap X=3L$.
\end{remark}

\subsection{Summary}

To summarize our computation carried out in this section, we proved:

\begin{theorem}\label{thm_summarySec6}
    Let $X\subseteq \mathbb P^n$ be a smooth cubic hypersurface with $n\ge 4$. Let $p$ be a point on $Bl_{\Delta}(F\times F)\cap \tilde{D}$ over a line $(L,L)$ on the diagonal. 
    \begin{itemize}
        \item  If $L$ is of the first type, then the tangent space at $p$ always has expected dimension $3n-9$;
        \item If $L$ is of the second type, then the tangent space at $p$ has expected dimension $3n-9$ if and only if the matrix of linear forms \eqref{eqn_Lmatrix} is non-degenerate, i.e., $L$ is not a higher triple line.
    \end{itemize}
\end{theorem}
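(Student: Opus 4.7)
\medskip
\noindent\emph{Proposal.} The plan is to assemble the three propositions that carry out the heavy computation in Section \ref{sec_general}: Propositions \ref{prop_Jrank-1st-type}, \ref{prop_Jrank-2nd-type}, and \ref{prop_Jmatrixdegenerate}. First I would pin down what ``expected dimension'' means here. The ambient space $\Bl_{\Delta}\bigl(Gr(2,n+1)^2\bigr)$ has dimension $4n-4$, and the scheme-theoretic intersection $\Bl_{\Delta_F}(F\times F)\cap\tilde D$ is cut out near a diagonal point by the equations \eqref{eqn_cubicnfold1}, \eqref{eqn_cubicnfold2}, \eqref{eqn_cubicnfold3}. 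The tangent space at $p$ is the kernel of the Jacobian of these equations, so its dimension equals $(4n-4)$ minus the rank of the Jacobian. Proposition \ref{prop_Jrank-2nd-type} bounds this rank by $n+5$ (matching $\operatorname{codim}D_F = n+5$ from Corollary \ref{cor_codimDF}), so the expected tangent dimension is $(4n-4)-(n+5)=3n-9$, which is attained exactly when the Jacobian has full rank $n+5$.

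For a line $L$ of the first type, I would simply cite Proposition \ref{prop_Jrank-1st-type}: the rank of the Jacobian at any point of the exceptional fiber is automatically $n+5$, independently of the second-order data of the cubic equation. Hence the tangent space has dimension $3n-9$.

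For a line $L$ of the second type, the Jacobian rank lies in $\{n+4,n+5\}$ by Proposition \ref{prop_Jrank-2nd-type}. The drop to $n+4$ is characterized by Proposition \ref{prop_Jmatrixdegenerate}: after row-reducing the $9$th and $10$th rows of \eqref{eqn_generalJacobian1} against the first four rows using $\lambda_{13}-\lambda_{1,n+1}\lambda_{23}=0$ and $\lambda_{14}-\lambda_{1,n+1}\lambda_{24}=0$, the rank drops iff $\lambda_{23}=\lambda_{24}=0$ and the symmetric matrices $A^0,A^1$ of \eqref{eqn_2nequtions6} share a non-trivial common null vector $v=[\lambda_{25},\ldots,\lambda_{2,n+1}]^T$. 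By linearity this common kernel condition is equivalent to the pencil $S=A^0 x_0+A^1 x_1$ of \eqref{eqn_Lmatrix} being degenerate, which by Definition \ref{def_htL} is precisely the condition that $L$ is a higher triple line.

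Assembling: the tangent space at $p$ has the expected dimension $3n-9$ iff the Jacobian has rank $n+5$, and the analysis above shows this happens automatically for lines of the first type, and for lines of the second type exactly when $L$ is \emph{not} a higher triple line. Since every computational ingredient is already established earlier in the section, no substantive obstacle remains; the only task is the bookkeeping that converts the Jacobian rank into the tangent space dimension and matches the algebraic non-degeneracy condition on $S$ to the geometric notion of a higher triple line.
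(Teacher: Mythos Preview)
Your proposal is correct and follows exactly the paper's own route: the proof in the paper simply cites Proposition \ref{prop_Jrank-1st-type} for lines of the first type and Proposition \ref{prop_Jmatrixdegenerate} for lines of the second type, which is precisely what you do (with the added bookkeeping $(4n-4)-(n+5)=3n-9$ made explicit).
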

\begin{proof}
    The argument about lines of the first (resp. second) type follows from Proposition \ref{prop_Jrank-1st-type} (resp. \ref{prop_Jmatrixdegenerate}). 
\end{proof}

\begin{remark}\normalfont\label{remark_DFtilde-exceptional}
    One can also show that the intersection $\tilde{D}_F\cap E$ is smooth if and only if $X$ has no higher triple line, where $E$ is the exceptional divisor of $\Bl_{\Delta_F}(F\times F)$. This can be obtained by keeping track of the computation of rank of the Jacobian matrix \eqref{eqn_generalJacobian1} with an additional condition $u=0$.
\end{remark}

We also provide an upper bound of the dimension of singularities of $\Bl_{\Delta_F}(F\times F)\cap \tilde{D}$.

\begin{lemma}\label{lemma_dimSing}
   Let $X$ be a smooth cubic hypersurface in $\mathbb P^n$ with $n\ge 4$. The singular locus of $\tilde{D}_F:=\Bl_{\Delta_F}(F\times F)\cap \tilde{D}$ has dimension at most $2n-7$.
\end{lemma}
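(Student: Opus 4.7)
\medskip

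The plan is to combine the pointwise tangent-space analysis of Section~\ref{sec_general} with the dimension bound on higher triple lines from Corollary~\ref{cor_htl-dim}. The key observation is that all potential singularities are forced to lie in the exceptional fiber over the locus of higher triple lines, and this locus is small.

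First I would show that $\mathrm{Sing}(\tilde D_F)$ is contained in the preimage of the higher triple line locus $\mathrm{HTL}(X)\subseteq F$ under the natural projection $\tilde D_F\to\Delta_F\cong F$. Indeed, away from the exceptional divisor of $\Bl_{\Delta_F}(F\times F)\to F\times F$, the intersection $\tilde D_F$ agrees with $D_F\setminus\Delta_F$, which is smooth by Proposition~\ref{prop_typeIII}. On the exceptional divisor there are two cases: at a point lying over a line of the first type, Proposition~\ref{prop_Jrank-1st-type} yields tangent space of the expected dimension $3n-9$; at a point lying over a line of the second type that is \emph{not} a higher triple line, Proposition~\ref{prop_Jmatrixdegenerate} (compare Theorem~\ref{thm_summarySec6}) gives the same conclusion. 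Since $\tilde D_F$ is irreducible of dimension $3n-9$ by Lemma~\ref{lemma_irreducible} (the dimension follows from the tangent-space count at any smooth point), a point at which the tangent space has the expected dimension is automatically smooth.

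Second, I would estimate the dimension of this preimage. By Corollary~\ref{cor_htl-dim},
\[
\dim\mathrm{HTL}(X)\le n-4.
\]
Every higher triple line is of the second type, so by Proposition~\ref{prop_DFtilde-fiber} the fiber of $\tilde D_F\to \Delta_F$ over such a line is a copy of $V_{n-2}\cong\mathbb P^1\times\mathbb P^{n-4}$ of dimension $n-3$. Combining these two bounds,
\[
\dim\mathrm{Sing}(\tilde D_F)\le \dim\mathrm{HTL}(X)+\dim V_{n-2}\le (n-4)+(n-3)=2n-7,
\]
which is the desired estimate.

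There is no real obstacle: the ingredients are all in place, and the argument is purely a matter of assembling the local smoothness statements of Section~\ref{sec_general} with the global dimension bound on the higher triple line locus. The only subtlety worth flagging is the step from ``tangent space of expected dimension'' to ``smooth point,'' which relies on knowing the global dimension of $\tilde D_F$; this is supplied by irreducibility (Lemma~\ref{lemma_irreducible}) together with the generic tangent space computation.
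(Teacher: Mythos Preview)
Your proposal is correct and follows essentially the same approach as the paper: locate the singular locus over the higher triple lines via Theorem~\ref{thm_summarySec6}, bound the base by Corollary~\ref{cor_htl-dim}, bound the fiber by Proposition~\ref{prop_DFtilde-fiber}, and add. The only difference is that you unpack Theorem~\ref{thm_summarySec6} into its constituent propositions and make explicit the passage from ``expected tangent dimension'' to ``smooth point'' via irreducibility, which the paper leaves implicit.
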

\begin{proof}
  By Theorem \ref{thm_summarySec6}, the singular locus of $\tilde{D}_F$ is over the locus of higher triple lines on the diagonal. By Corollary \ref{cor_htl-dim}, the locus of lines of higher triple lines has dimension at most $n-4$. Since the fiber dimension is at most $n-3$ (cf. Proposition \ref{prop_DFtilde-fiber}), we have 
  $$\dim(Sing(\tilde{D}_F))\le (n-4)+(n-3)=2n-7.$$
\end{proof}

The bound is sharp when $\dim(X)=3$, where the singular locus consists of finitely many $\mathbb P^1$ over finitely many triple lines. When $\dim(X)\ge 4$, we don't know if the bound is still sharp, but for the purpose of proving the reducedness below, it is enough.

\section{Reducedness of $\textup{Bl}_{\Delta_F}(F\times F)\cap \tilde{D}$}\label{sec_reducedness}

In this section, we study the reducedness properties of the intersection $\textup{Bl}_{\Delta_F}(F\times F)\cap \tilde{D}$. In particular, Theorem \ref{thm_summarySec6} tells us where the intersection is smooth by computing the rank of the Jacobian matrix. On the locus where the Jacobian matrix has lower rank, the scheme may have an embedded component. However, we will show this cannot happen.

\begin{lemma}\label{lemma_pdtrick}
    Let $A$ be a regular local ring over a field $k$, and $I\trianglelefteq A$ be an ideal generated by two elements $I=(f,g)$. Then, the depth of the quotient ring $\textup{depth}(A/I)$ is at least $\dim(A)-2$. 
    

\end{lemma}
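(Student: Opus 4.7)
The plan is to reduce the depth estimate to a projective dimension bound via the Auslander--Buchsbaum formula. Since $A$ is a regular local ring, it is Cohen--Macaulay with $\depth(A) = \dim(A)$, and every finitely generated $A$-module has finite projective dimension. For any nonzero finitely generated $A$-module $M$ one therefore has $\textup{pd}(M) + \depth(M) = \dim(A)$, so it suffices to show that $\textup{pd}(A/I) \leq 2$ (the case $I = A$ being vacuous).

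To construct a free resolution of $A/I$ of length at most two, I would invoke the theorem of Auslander--Buchsbaum that every regular local ring is a UFD. Write $I = (f,g)$; the principal case (which includes $f = 0$ or $g = 0$) gives $\textup{pd}(A/I) \leq 1$ immediately, so assume $f$ and $g$ are both nonzero. Let $d = \gcd(f,g)$ in the UFD $A$ and factor $f = d f'$, $g = d g'$ with $\gcd(f', g') = 1$. Because $A$ is a domain and $d \ne 0$, the syzygy modules of $(f,g)$ and of $(f', g')$ coincide: $af + bg = 0 \iff d(af' + bg') = 0 \iff af' + bg' = 0$. In a UFD, coprimality of $f', g'$ forces every syzygy $(a,b)$ of $(f', g')$ to be an $A$-multiple of the Koszul relation $(g', -f')$: from $af' = -bg'$ one reads off $g' \mid a$, say $a = c g'$, whence $b = -c f'$. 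Thus the syzygy module is free of rank one, yielding the free resolution
$$0 \to A \xrightarrow{(g', -f')} A^{2} \xrightarrow{(f, g)} A \to A/I \to 0,$$
so $\textup{pd}(A/I) \leq 2$, and Auslander--Buchsbaum gives $\depth(A/I) \geq \dim(A) - 2$.

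The main conceptual step is the syzygy computation, which rests entirely on the UFD property of $A$; without it, a two-generator ideal could have a more complicated first syzygy module and hence a longer free resolution, making the projective-dimension bound fail. Regularity of $A$ enters twice in the argument: once to guarantee that $A/I$ has finite projective dimension, so that Auslander--Buchsbaum applies, and once to supply the UFD structure needed for the $\gcd$ factorization. The hypothesis that $A$ contains a field $k$ plays no role in the proof itself; it is presumably included because the ambient geometric setting of the paper naturally produces such rings.
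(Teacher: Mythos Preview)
Your proof is correct and follows essentially the same approach as the paper: factor out the $\gcd$ using the UFD property of a regular local ring, exhibit the length-two free resolution $0 \to A \to A^{2} \to A \to A/I \to 0$, and conclude via Auslander--Buchsbaum. You supply more detail than the paper on why the first syzygy module is free of rank one, and your observation that the hypothesis ``over a field $k$'' is not used is accurate.
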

\begin{proof}
   By assumption, $A$ is a UFD. Hence, we can write $f=hf_1$ and $g=hg_1$, where $h$ is the greatest common divisor of $f$ and $g$.

We have a free $A$-module resolution of $A/I$
$$A\xrightarrow{\begin{bmatrix}g_1\\-f_1\end{bmatrix}} A^2\xrightarrow{[f\ g]}A\to A/(f,g).$$

Hence, the projective dimension of $A/I$ is at most $2$. Therefore, by Auslander–Buchsbaum formula \cite[Theorem 19.9]{Eisenbud-CA}, 
$$\textup{depth}(A/I)=\textup{depth}(A)-\textup{pd}(A/I).$$

    Since $A$ is regular, $\dim(A)=\textup{depth}(A)$, the claim follows.
\end{proof}

\begin{remark}\normalfont \label{rmk_depth-1}
    $\textup{depth}(A/I)=\dim(A)-1$ if and only if $f_1$ or $f_2$ is a unit, if and only if $f$ and $g$ are multiple of each other, if and only if $R/I$ is a (local) complete intersection, i.e., a hypersurface.
\end{remark}

In what follows, we apply the lemma to a local ring localized at a regular affine subvariety of $\Bl_{\Delta_F}(F\times F)\cap \tilde{D}$ containing singular locus. For example, when $\dim(X)=3$, $\Bl_{\Delta_F}(F\times F)\cap \tilde{D}$ is 4-dimensional and its singular locus is disjoint union $\sqcup_i\mathbb P^1$ indexed by finitely triple lines, so $A$ is a 3-dimensional regular local ring, arising from the localization of coordinate ring of an affine chart of $\Bl_{\Delta_F}(F\times F)\cap \tilde{D}$ at one of the $\mathbb P^1$. The lemma ensures $A/I$ has depth is at least one, allowing us to conclude reducednss using Serre's criterion.

\begin{proposition}\label{prop_cubicnfold_reduced}
  Let $X$ be a smooth cubic hypersurface $\mathbb P^n$ with $n\ge 4$. Then the scheme-theoretical intersection $\Bl_{\Delta_F}(F\times F)\cap \tilde{D}$ is reduced.
\end{proposition}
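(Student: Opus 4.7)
The plan is to apply Serre's criterion for reducedness: a Noetherian scheme is reduced if and only if it satisfies $(R_0)$ and $(S_1)$. Since $\tilde{D}_F := \Bl_{\Delta_F}(F\times F)\cap\tilde{D}$ is irreducible by Lemma \ref{lemma_irreducible}, the task reduces to verifying (i) generic reducedness and (ii) the absence of embedded associated primes, i.e., $\depth\mathcal{O}_{\tilde{D}_F,\mathfrak{q}}\geq 1$ at every non-minimal prime $\mathfrak{q}$.

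Condition (i) is immediate: the generic point of $\tilde{D}_F$ lies in the locus parameterizing pairs of truly incident lines away from the diagonal, where $\tilde{D}_F\cong D_F$ is smooth by Proposition \ref{prop_typeIII}. Hence the local ring at the generic point is a field, giving $(R_0)$.

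For (ii), fix a non-minimal prime $\mathfrak{q}$ corresponding to a closed subvariety $W\subseteq\tilde{D}_F$. If $W\not\subseteq\textup{Sing}(\tilde{D}_F)$, the generic point $\eta_W$ lies in the open smooth locus, so $\mathcal{O}_{\tilde{D}_F,\mathfrak{q}}$ is regular, hence Cohen--Macaulay, and $(S_1)$ is automatic. The substantive case is $W\subseteq\textup{Sing}(\tilde{D}_F)$, which I plan to handle using the local presentation implicit in Section \ref{sec_general}. In the affine chart \eqref{eqn_blowupcoordinate}, the first eight and the last $n-4$ rows of the Jacobian \eqref{eqn_generalJacobian1} are derivatives of $n+4$ equations whose tangent vectors are everywhere linearly independent, thus cutting out a smooth subvariety $Y$ of codimension $n+4$, so $\dim Y=3n-8$. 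The remaining two non-transverse equations
\[
f=\lambda_{13}-\lambda_{1,n+1}\lambda_{23},\qquad g=\lambda_{14}-\lambda_{1,n+1}\lambda_{24}
\]
then cut $Y$ to produce $\tilde{D}_F=Y\cap V(f,g)$ locally. Consequently $\mathcal{O}_{\tilde{D}_F,\mathfrak{q}}=A/(f,g)$ with $A:=\mathcal{O}_{Y,\eta_W}$ a regular local ring of dimension $(3n-8)-\dim W$. Applying Lemma \ref{lemma_pdtrick} together with the bound $\dim W\leq \dim\textup{Sing}(\tilde{D}_F)\leq 2n-7$ from Lemma \ref{lemma_dimSing} then yields
\[
\depth\mathcal{O}_{\tilde{D}_F,\mathfrak{q}}\geq \dim A-2=(3n-10)-\dim W\geq (3n-10)-(2n-7)=n-3\geq 1
\]
for $n\geq 4$, establishing $(S_1)$.

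The main technical point requiring care is confirming that the local presentation $\tilde{D}_F=Y\cap V(f,g)$ is valid uniformly across the entire singular locus, rather than only at the distinguished base line of the second type at the origin of the chart. Since $\textup{Sing}(\tilde{D}_F)$ lies over the locus of higher triple lines on the diagonal $\Delta_F$ by Theorem \ref{thm_summarySec6}, this locus can be covered by blow-up charts of the shape analyzed in Section \ref{sec_general}, each centered at a line of the second type. The separation of the $n+4$ transverse equations from the two non-transverse ones $f,g$ is intrinsic to the second-order geometry of $F$ along such a line, so it persists in every such chart and the depth estimate goes through uniformly.
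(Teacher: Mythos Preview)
Your proposal is correct and follows essentially the same approach as the paper: Serre's criterion, the same separation of the defining equations into $n+4$ transverse ones cutting out a smooth $Y$ of dimension $3n-8$ and the two residual relations $f,g$, application of Lemma~\ref{lemma_pdtrick} to the regular local ring $A=\mathcal{O}_{Y,\eta_W}$, and the dimension bound $\dim\textup{Sing}(\tilde D_F)\le 2n-7$ from Lemma~\ref{lemma_dimSing} to obtain $\depth\ge n-3\ge 1$. Your final paragraph on the uniformity of the local presentation across all charts centered at lines of the second type is a point the paper leaves implicit, and it is handled appropriately.
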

\begin{proof}
By Theorem \ref{thm_summarySec6}, it suffices to study a neighborhood on the exceptional locus over $(L,L)$ on the diagonal, where $L$ is a line of the second type.

In an affine chart, the ideal $J$ of the intersection $\Bl_{\Delta_F}(F\times F)\cap \tilde{D}$ is generated by the 8 functions \eqref{eqn_cubicnfold1} and \eqref{eqn_cubicnfold2} cutting out $\Bl_{\Delta_F}(F\times F)$, together with the $n-2$ functions \eqref{eqn_cubicnfold3} cutting out $\tilde{D}$. Our goal is to show the reducedness of $R/J$, where $R$ is the polynomial ring with variables $u$, $x_{ij}$, and $\lambda_{ij}$.



Recall that according to the computation of the Jacobian matrix \eqref{eqn_generalJacobian1}, the equations \eqref{eqn_cubicnfold1}, \eqref{eqn_cubicnfold2} and the last $n-4$ equations in \eqref{eqn_cubicnfold3} intersect transversely. Let $J_1$ denote the ideal generated by these functions. Hence  $R':=R/J_1$ is a regular ring, which has dimension $$\dim(R')=\dim \Bl_{\Delta_F}(Gr(2,n+1)^2)-8-(n-4)=3n-8.$$

Moreover, $R\to R/J$ factors through $R'\to R'/J_2$, where $J_2$ is the ideal generated by the remaining two functions
$$f:=\lambda_{13}-\lambda_{1,n+1}\lambda_{23}, g:=\lambda_{14}-\lambda_{1,n+1}\lambda_{24}.$$

Now, we have the isomorphism $R/J\cong R'/J_2$, so after localization, we are in the situation in Lemma \ref{lemma_pdtrick}.

To show the reducedness of $R'/J_2$, we need to show Serre's criterion \cite[Ex. 11.10]{Eisenbud-CA}
 $$(\mathcal{R}_0)+(\mathcal{S}_1).$$

 First, Lemma \ref{lemma_dimSing} implies that $\textup{Spec}(R'/J_2)$ is smooth in codimension one, so in particular, it satisfies $(\mathcal{R}_0)$. For $(\mathcal{S}_1)$, it suffices to check localization at prime ideals whose support is contained in the singular locus.

Specifically, let $\mathfrak{p}$ be a prime ideal of $R'$ containing $J_2$.  Now apply Lemma \ref{lemma_pdtrick} to the regular local ring $R'_{\mathfrak{p}}$ and the ideal $(J_2)_{\mathfrak{p}}$, so we obtain
\begin{equation}\label{eqn_depth}
    \depth(R'_{\mathfrak{p}}/(J_2)_{\mathfrak{p}})\ge \dim (R'_{\mathfrak{p}})-2.
\end{equation}

It takes the minimal value when the prime ideal $\mathfrak{p}$ corresponds to an irreducible component of the singular locus of $\textup{Spec}(R'/J_2)$. According to Lemma \ref{lemma_dimSing}, the dimension of the singular locus is bounded above by $2n-7$. Therefore, 
$$\dim(R'_{\mathfrak{p}})=\textup{height}(\mathfrak{p})\ge (3n-8)-(2n-7)=n-1.$$ Hence by \eqref{eqn_depth}, we find that $\depth(R'_{\mathfrak{p}}/(J_2)_{\mathfrak{p}})\ge n-3\ge 1$, as long as $n\ge 4$. Therefore, $R'/J_2$ satisfies Serre's condition $(\mathcal{S}_1)$ and is reduced.
\end{proof}

\begin{corollary} \label{cor_H(X)normal}
$H(X)$ is normal.
\end{corollary}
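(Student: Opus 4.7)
The plan is to pass to the branched double cover $\widetilde{H(X)}$, verify Serre's criterion $(R_1)+(S_2)$ there, and then descend via the finite quotient. Since $H(X)\cong \widetilde{H(X)}/\Z_2$ and the quotient of a normal variety by a finite group is normal, it suffices to prove $\widetilde{H(X)}$ is normal. From \eqref{eqn_intro-diagram}, $\widetilde{H(X)}=\Bl_{\tilde D_F}(\Bl_{\Delta_F}(F\times F))$ is the blow-up of the smooth variety $\Bl_{\Delta_F}(F\times F)$ along the reduced center $\tilde D_F$ (Proposition \ref{prop_cubicnfold_reduced}).

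For $(S_2)$, the aim is to argue $\widetilde{H(X)}$ is Cohen--Macaulay. Using the local model from the proof of Proposition \ref{prop_cubicnfold_reduced}, the slice of $\widetilde{H(X)}$ about the exceptional fiber is $\Bl_{(f,g)}\textup{Spec}(R')$ with $R'$ regular and $(f,g)$ of height one. Localizing at any prime, $R'$ is a UFD by Auslander--Buchsbaum, so we may factor $f=hf_1$, $g=hg_1$ with $\gcd(f_1,g_1)=1$. Since $R'$ is Cohen--Macaulay, coprime elements form a regular sequence, so $(f_1,g_1)$ is a codimension-two complete intersection. The Rees-algebra identification $\textup{Rees}((f,g))\cong h\cdot \textup{Rees}((f_1,g_1))$ gives $\Bl_{(f,g)}\textup{Spec}(R')\cong \Bl_{(f_1,g_1)}\textup{Spec}(R')$, which is covered by affine charts of the form $\textup{Spec}(R'[s]/(g_1-sf_1))$ --- hypersurfaces in regular rings, hence local complete intersections and therefore Cohen--Macaulay. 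So $\widetilde{H(X)}$ is locally Cohen--Macaulay and satisfies $(S_2)$.

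For $(R_1)$, the blow-up is smooth over the smooth locus of its center, so $\textup{Sing}(\widetilde{H(X)})$ lies over $\textup{Sing}(\tilde D_F)$. By Lemma \ref{lemma_dimSing}, $\dim\textup{Sing}(\tilde D_F)\le 2n-7$. The ideal of $\tilde D_F$ is locally generated by the $n-2$ equations in \eqref{eqn_cubicnfold3}, so any fiber of $\widetilde{H(X)}\to \Bl_{\Delta_F}(F\times F)$ has dimension at most $n-3$. Thus $\dim\textup{Sing}(\widetilde{H(X)})\le (2n-7)+(n-3)=3n-10$, which has codimension at least $n-2\ge 2$ in the $(4n-12)$-dimensional $\widetilde{H(X)}$, so $(R_1)$ holds.

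The principal subtlety is that $(f,g)$ itself fails to be a regular sequence in $R'$; the greatest-common-divisor extraction converts it to the codimension-two complete intersection $(f_1,g_1)$, enabling both the Rees-algebra comparison and the hypersurface description of the affine charts used for $(S_2)$. The remaining inputs --- reducedness of $\tilde D_F$ and the dimension bound on its singular locus --- come directly from Proposition \ref{prop_cubicnfold_reduced} and Lemma \ref{lemma_dimSing}.
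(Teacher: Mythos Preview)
Your overall strategy---verify Serre's criterion on the double cover $\widetilde{H(X)}$ and descend---is sound and different from the paper's route, which instead observes that a radical ideal is integrally closed and invokes a Rees-algebra result to conclude normality directly on $H(X)$. Your $(R_1)$ bound is correct.

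The $(S_2)$ step, however, has a genuine gap. You assert that ``the slice of $\widetilde{H(X)}$ about the exceptional fiber is $\Bl_{(f,g)}\textup{Spec}(R')$,'' but $\widetilde{H(X)}$ is the blow-up of the $(4n-12)$-dimensional regular scheme $\Bl_{\Delta_F}(F\times F)$ along the ideal $I=(e_1,\ldots,e_{n-4},f,g)$ of $\tilde D_F$, not the blow-up of the $(3n-8)$-dimensional $\textup{Spec}(R')$ along $(f,g)$; there is no ``slice'' identification relating the two. Moreover, your claim that $(f_1,g_1)$ is a codimension-two complete intersection in $R'$ is false here: since $\tilde D_F$ is reduced, irreducible, and of height one in the UFD $R'$, the ideal $(\bar f,\bar g)$ is already principal, so $(f_1,g_1)=R'$.

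The fix uses your own gcd idea but in the correct ring. From $(\bar f,\bar g)=(\bar h)$ in $R'$ one deduces $I=(e_1,\ldots,e_{n-4},h)$ in the regular local ring $A$ of $\Bl_{\Delta_F}(F\times F)$: indeed $h\in(e_1,\ldots,e_{n-4},f,g)$ because $\bar h\in(\bar f,\bar g)$, and conversely $f,g\in(e_1,\ldots,e_{n-4},h)$. Since $e_1,\ldots,e_{n-4}$ are part of a regular system of parameters and $\bar h\neq 0$ in $R'$, this is a regular sequence, so $\tilde D_F\subset \Bl_{\Delta_F}(F\times F)$ is a local complete intersection (this is exactly the content of Proposition~\ref{prop_H(X)tilde-hypersing}). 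The Rees algebra of a complete-intersection ideal in a regular ring is Cohen--Macaulay, giving $(S_2)$ for $\widetilde{H(X)}$ and completing your argument.
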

\begin{proof}
 By passing to the $\Z_2$ quotient, Proposition \ref{prop_cubicnfold_reduced} implies that the birational morphism $$H(X)\to \Bl_{\Delta_F}\Sym^2F$$ is a blow-up of a smooth scheme along a reduced subscheme. The ideal sheaf of a reduced scheme is integrally closed \cite[Rmk. 1.1.3 (4)]{Irena}. Therefore, by \cite[Prop. 5.2.4]{Irena}, the corresponding Rees algebra is integrally closed. Hence, the blow-up is normal. 
\end{proof}

\begin{remark} \normalfont
    According to \cite[Lem. 10.21]{Kollar}, Proposition \ref{prop_cubicnfold_reduced} implies that the union $$\Bl_{\Delta_F}(F\times F)\cup \tilde{D},$$
with the reduced scheme structure is \textit{seminormal}.
\end{remark}

\section{Main Theorems and Applications}\label{sec_app}
In this section, we mention some applications as consequences of results in previous sections. In particular, Theorem \ref{thm_H(X)3fold} and \ref{thm_mainprecise} from the introduction will be proved.

\subsection{Cubic Threefolds}
We note that when $X$ is a cubic threefold, $\tilde{D}_F$ has codimension one in $\Bl_{\Delta_F}(F\times F)$. Hence, by Proposition \ref{prop_cubicnfold_reduced} and Lemma \ref{lemma_irreducible}, it is a divisor and, moreover, a Cartier divisor due to the smoothness of $\Bl_{\Delta_F}(F\times F)$. Since blow-up at a Cartier divisor is an isomorphism, we have $\widetilde{H(X)}\cong \Bl_{\Delta_F}(F\times F)$. Passing to the $\Z_2$ quotient, we recover the main theorem in \cite{YZ_SkewLines}.

\begin{corollary}(\cite{YZ_SkewLines}) \label{cor_cubic3fold-Hsmooth}
    Let $X$ be a smooth cubic threefold. Then, the Hilbert scheme of a pair of skew lines $H(X)$ is smooth and isomorphic to $\Bl_{\Delta_F}\Sym^2F$.
\end{corollary}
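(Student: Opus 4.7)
The plan is to specialize the general machinery of the preceding sections to the threefold case $n=4$, where the second blow-up center collapses to a Cartier divisor, making the second blow-up an isomorphism.

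First, I would collect dimension data. By Corollary \ref{cor_codimDF}, $D_F$ has codimension $n-3=1$ in $F\times F$, hence its strict transform $\tilde{D}_F$ is a codimension-one closed subscheme of the smooth $4$-fold $\Bl_{\Delta_F}(F\times F)$. By Lemma \ref{lemma_irreducible}, $\tilde{D}_F$ is irreducible, and by Proposition \ref{prop_cubicnfold_reduced} it is reduced, so it is an integral closed subscheme. An integral codimension-one subscheme of a smooth variety is automatically a Cartier divisor.

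Next, I would appeal to the universal property of blow-ups: blowing up along a Cartier divisor is an isomorphism. Applied to the second blow-up $\sigma_2 : \widetilde{H(X)} \to \Bl_{\Delta_F}(F\times F)$ in the factorization \eqref{eqn_intro-diagram}, this yields $\widetilde{H(X)} \cong \Bl_{\Delta_F}(F\times F)$, which is smooth.

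Finally, I would descend along the branched $\Z_2$-cover $\widetilde{H(X)} \to H(X)$ that forgets the order of the pair of lines. In a blow-up chart with coordinates $(x_1,x_2,u_1,\lambda)$ on $\Bl_{\Delta_F}(F\times F)$, where $u_i = y_i - x_i$ and $u_2 = \lambda u_1$, the swap involution acts by $(x_i,u_i)\mapsto(x_i+u_i,-u_i)$, hence fixes the exceptional divisor $\{u_1=0\}$ pointwise and, after the linearizing change of variable $X_i = x_i + u_i\lambda^{i-1}/2$, reduces to $u_1\mapsto -u_1$ with the other coordinates fixed. The quotient is therefore smooth with invariants $(X_1,X_2,u_1^2,\lambda)$, and it is canonically identified with the Hilbert scheme $F^{[2]}$. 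By Fogarty's theorem for smooth surfaces, $F^{[2]} \cong \Bl_{\Delta_F}\Sym^2 F$, so $H(X) \cong \Bl_{\Delta_F}\Sym^2 F$ and is smooth. Since the geometric input (codimension, irreducibility, reducedness) is already established, no single step is a serious obstacle; the only subtlety is the identification of the $\Z_2$-quotient with $\Bl_{\Delta_F}\Sym^2 F$, which is classical.
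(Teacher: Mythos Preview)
Your proposal is correct and follows essentially the same route as the paper: use Lemma~\ref{lemma_irreducible} and Proposition~\ref{prop_cubicnfold_reduced} to see that $\tilde{D}_F$ is an integral codimension-one subscheme of the smooth variety $\Bl_{\Delta_F}(F\times F)$, hence Cartier, so the second blow-up $\sigma_2$ is an isomorphism; then pass to the $\Z_2$-quotient. The paper's proof simply says ``passing to the $\Z_2$ quotient'' for the last step, whereas you supply the explicit local computation identifying the quotient with $F^{[2]}\cong\Bl_{\Delta_F}\Sym^2F$; this extra detail is correct and harmless.
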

For higher dimensions, $\tilde{D}_F$ has codimension at least two, so the singularities of $\tilde{D}_F$ will contribute to the singularities of the second blow-up.

\subsection{Smoothness Criterion of Hilbert scheme} 

\begin{theorem}(cf. Theorem \ref{thm_mainprecise}) \label{thm_mainprecisebody}
Let $X$ be a cubic hypersurface with $\dim(X)\ge 4$. Then $H(X)$ is smooth if and only if the $X$ has no higher triple line.
\end{theorem}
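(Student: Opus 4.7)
The plan is to deduce Theorem \ref{thm_mainprecisebody} from the structural description of the blow-up center $\tilde{D}_F$ given by Proposition \ref{intro_keyprop}, via the two-step factorization \eqref{eqn_intro-diagram}. The double cover $\widetilde{H(X)}$ is the iterated blow-up $\Bl_{\tilde{D}_F}\Bl_{\Delta_F}(F\times F)$, with smooth intermediate ambient; since $\tilde{D}_F$ is reduced and irreducible, smoothness of $\widetilde{H(X)}$ is controlled by smoothness of $\tilde{D}_F$, which by Theorem \ref{thm_summarySec6} occurs precisely when $X$ has no higher triple line. The task is to transfer this equivalence to the $\Z_2$-quotient $H(X) = \widetilde{H(X)}/\Z_2$.

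For the forward direction, assume $X$ has no higher triple line. Then $\tilde{D}_F$ is smooth and $\widetilde{H(X)}$ is smooth as the blow-up of a smooth reduced subvariety in a smooth variety. The swap action acts by $-1$ on $N_{\Delta_F/F\times F}$ and trivially on its projectivization, so the $\Z_2$-fixed locus in $\Bl_{\Delta_F}(F\times F)$ is exactly the exceptional divisor $E$. After the second blow-up, using that $\tilde{D}_F \cap E$ is smooth by Remark \ref{remark_DFtilde-exceptional}, the fixed locus in $\widetilde{H(X)}$ remains a smooth divisor along which $\Z_2$ acts as a pseudoreflection. Chevalley-Shephard-Todd then yields smoothness of $H(X)$.

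For the reverse direction, assume $L$ is a higher triple line. I would exhibit a singular point of $H(X)$ directly: descend the singular point $p$ of $\tilde{D}_F$ over $(L,L)$ guaranteed by Theorem \ref{thm_summarySec6} to its image subscheme $\bar{p} \in H(X)$, a subscheme of type (IV) associated with $L$, and carry out a tangent-space computation on $H(X)$ near $\bar{p}$. The kernel vector of the matrix of linear forms \eqref{eqn_Lmatrix} at $L$ — the obstruction witnessing the higher-triple-line condition (cf. Proposition \ref{prop_Jmatrixdegenerate}) — produces an extra tangent direction to $H(X)$ at $\bar{p}$ not accounted for by the expected dimension, so $H(X)$ is singular at $\bar{p}$.

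The main obstacle will be the reverse direction, since $\Z_2$-quotients of irreducible singular varieties need not be singular (e.g., the quotient of the cuspidal cubic $V(y^2 - x^3)$ by $y \mapsto -y$ is smooth). The resolution is to work directly on $H(X)$ rather than abstractly through the cover $\widetilde{H(X)} \to H(X)$, leveraging the explicit second-order data of the cubic equation near a higher triple line from \eqref{eqn_F2ndtypeU} and \eqref{eqn_phiij} to exhibit the unaccounted-for tangent vector at $\bar{p}$.
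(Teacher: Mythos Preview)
Your forward direction is essentially the paper's: smoothness of $\tilde D_F$ gives smoothness of $\widetilde{H(X)}$, and smoothness of the branch divisor $\tilde E$ (via Remark~\ref{remark_DFtilde-exceptional}) lets the $\Z_2$-quotient descend to a smooth $H(X)$. Invoking Chevalley--Shephard--Todd is the same content as the paper's ``branched along a smooth divisor'' sentence.

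For the reverse direction you diverge. The paper's argument is purely structural: when $X$ has a higher triple line, $\tilde D_F$ is reduced (Proposition~\ref{prop_cubicnfold_reduced}), singular (Theorem~\ref{thm_summarySec6}), and of codimension $n-3\ge 2$ in the smooth variety $\Bl_{\Delta_F}(F\times F)$, so the blow-up $\widetilde{H(X)}$ is singular; the paper then asserts the $\Z_2$-quotient is singular as well. Your objection to that last step is legitimate in general, but the paper's framework resolves it without a deformation-theoretic computation: $\mathrm{Sing}(\tilde D_F)$ lies in the exceptional divisor $E$, which $\Z_2$ fixes pointwise, so the quotient center $\tilde D_F'\subset F^{[2]}$ is singular at the same points (this is made explicit at the start of Section~\ref{sec_singualarH(X)}). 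Since the center of $H(X)\to F^{[2]}$ is reduced (proof of Corollary~\ref{cor_H(X)normal}) and of codimension $\ge 2$, one reruns the same ``blow-up along a singular lci center'' principle directly on $F^{[2]}$ to conclude $H(X)$ is singular. The hypersurface-singularity structure of $\tilde D_F$ (Corollary~\ref{cor_reducedness}) is what makes this blow-up principle easy to verify by a local Jacobian check.

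Your proposed alternative---computing $T_{\bar p}H(X)$ directly at a type~(IV) point via the kernel vector of the matrix~\eqref{eqn_Lmatrix}---is plausible but would require computing $\Hom(I_Z,\mathcal O_Z)$ for a type~(IV) scheme $Z$, a calculation the paper never performs and which is substantially harder than the structural route. There is also a small slip in your outline: a singular point $p\in\tilde D_F$ does not descend to a single point of $H(X)$, since the second blow-up replaces it by a positive-dimensional exceptional fiber; you would need to specify which type~(IV) scheme in that fiber you work at before any tangent-space computation makes sense.
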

\begin{proof}
According to the analysis in the Introduction, to show the smoothness of $\widetilde{H(X)}$, it suffices to show that $\textup{Bl}_{\Delta_F}(F\times F)\cap \tilde{D}$ is set-theoretically smooth. By Proposition \ref{prop_typeIII}, it is always smooth away from the diagonal, as long as $X$ is smooth. By Theorem \ref{thm_summarySec6}, when $X$ has no higher triple lines, the second blow-up center $\textup{Bl}_{\Delta_F}(F\times F)\cap \tilde{D}$ is smooth everywhere. Hence, the blow-up $\widetilde{{H(X)}}$ is smooth. Now, the double cover $\widetilde{{H(X)}}\to H(X)$ is branched along the strict transform $\tilde{E}$ of the exceptional divisor $E$ of $\textup{Bl}_{\Delta_F}(F\times F)$. On the other hand, $\tilde{E}\to E$ is blow-up of the intersection $\tilde{D}_F\cap E$, which is again smooth when $X$ has no higher triple line (cf. Remark \ref{remark_DFtilde-exceptional}), hence $\tilde{E}$ is a smooth divisor and the $\Z_2$-quotient $H(X)$ is smooth as well.

   Conversely, when $X$ has a higher triple line, blowup center $\widetilde{{H(X)}}\to \Bl_{\Delta_F}(F\times F)$ is reduced and singular of codimension at least two, so $\widetilde{H(X)}$ and its $\Z_2$ quotient is singular as well.
\end{proof}

\subsection{Hypersurface singularities}
Recall that a variety $Y$ has hypersurface singularities if $\dim(T_pY)\le \dim(Y)+1$ at any point $p\in Y$. Equivalently, $Y$ can be locally analytically embedded in $\C^{\dim(Y)+1}$ as a hypersurface. 
\begin{corollary}\label{cor_reducedness}
Let $X$ be a smooth cubic hypersurface in $\mathbb P^n$ with $n\ge 4$. Then the scheme-theoretical intersection $\tilde{D}_F=\Bl_{\Delta_F}(F\times F)\cap \tilde{D}$ is reduced, irreducible, and has hypersurface singularities. 
\end{corollary}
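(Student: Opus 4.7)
The plan is to assemble the three assertions from ingredients already in hand. Reducedness is exactly Proposition \ref{prop_cubicnfold_reduced}, and irreducibility is Lemma \ref{lemma_irreducible}, so the only new content is the hypersurface-singularity statement. I would extract it by revisiting the local presentation of $\tilde D_F$ that was set up in the proof of Proposition \ref{prop_cubicnfold_reduced}, and then invoking the fact that regular local rings are unique factorization domains.

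In that presentation, eight of the ten defining equations of $\tilde D_F$ in an affine chart, namely \eqref{eqn_cubicnfold1}, \eqref{eqn_cubicnfold2} together with the last $n-4$ equations of \eqref{eqn_cubicnfold3}, cut out transversely a smooth affine variety $\mathrm{Spec}(R')$ of dimension $3n-8 = \dim \tilde D_F + 1$. The two remaining ``non-transverse'' equations
\begin{equation*}
f = \lambda_{13} - \lambda_{1,n+1}\lambda_{23}, \qquad g = \lambda_{14} - \lambda_{1,n+1}\lambda_{24}
\end{equation*}
generate an ideal $J_2 \subseteq R'$, and $R'/J_2$ is the coordinate ring of $\tilde D_F$ in this chart. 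Since $\tilde D_F$ is reduced by Proposition \ref{prop_cubicnfold_reduced} and irreducible by Lemma \ref{lemma_irreducible}, the ideal $J_2$ is a prime ideal of $R'$, and it has height one because $\dim(R'/J_2) = \dim \tilde D_F = \dim R' - 1$.

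For any closed point $p \in \tilde D_F$, the localization $R'_{\mathfrak p}$ is a regular local ring, hence a UFD, so the height-one prime $(J_2)_{\mathfrak p}$ is principal. Therefore, locally near $p$, the subscheme $\tilde D_F$ is cut out inside the smooth ambient $\mathrm{Spec}(R')$ by a single equation, which is exactly the definition of hypersurface singularities. The argument is essentially bookkeeping on top of Proposition \ref{prop_cubicnfold_reduced}, so I do not expect any obstacles; the hypothesis $n \geq 4$ is inherited from that proposition.

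An alternative, slightly less clean, route would be to compute $\dim T_p \tilde D_F$ directly from Theorem \ref{thm_summarySec6}: at smooth points one has $\dim T_p = 3n-9 = \dim \tilde D_F$, while at singular points (necessarily lying above higher triple lines on the diagonal of second type) the Jacobian rank drops by exactly one, giving $\dim T_p = 3n-8 = \dim \tilde D_F + 1$. One could then appeal to the Cohen structure theorem plus the UFD property to conclude. The UFD argument via $R'/J_2$ above packages this more efficiently and avoids re-running the Jacobian computation.
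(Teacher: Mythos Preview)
Your proposal is correct. For reducedness and irreducibility you cite exactly the same results the paper does. For the hypersurface-singularity claim, the paper takes your ``alternative route'': it simply invokes Proposition~\ref{prop_Jrank-2nd-type}, which says the Jacobian matrix has rank at least $n+4$, so the tangent space at any point has dimension at most $(4n-4)-(n+4)=3n-8=\dim\tilde D_F+1$, and that is the definition of hypersurface singularities used in the paper. No appeal to the Cohen structure theorem or UFDs is needed once the rank bound is in hand.

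Your primary argument is a genuinely different and rather elegant repackaging: instead of recalling the Jacobian computation, you use only the \emph{conclusion} of Proposition~\ref{prop_cubicnfold_reduced} (reducedness) and Lemma~\ref{lemma_irreducible} (irreducibility) to deduce that $J_2\subseteq R'$ is a height-one prime, and then the Auslander--Buchsbaum UFD theorem makes it locally principal. This buys you a proof that is independent of the detailed rank analysis in Section~\ref{sec_general}; the paper's proof, on the other hand, is shorter precisely because that analysis has already been carried out. One small remark: your UFD argument, as written, lives in the chart over a second-type line set up in the proof of Proposition~\ref{prop_cubicnfold_reduced}; this suffices because by Proposition~\ref{prop_typeIII} and Proposition~\ref{prop_Jrank-1st-type} all other points of $\tilde D_F$ are smooth and hence trivially satisfy the tangent-space bound.
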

\begin{proof}
The reducedness and irreducibility follow from Proposition \ref{prop_cubicnfold_reduced} and Lemma \ref{lemma_irreducible}. 
Finally,  Proposition \ref{prop_Jrank-2nd-type} tells us that the rank of the Jacobian matrix can drop by at most one. Hence, the tangent space has dimension at most $\dim(\tilde{D}_F)+1$, showing $D_F$ has hypersurface singularities.
\end{proof}

\begin{remark}\normalfont
    Having hypersurface singularities implies that $\tilde{D}_F$ is a local complete intersection and is Cohen-Macaulay. By Lemma \ref{lemma_dimSing}, $\tilde{D}_F$ is smooth in codimension $(2n-6)\ge 2$, therefore $\tilde{D}_F$ is normal.
\end{remark}

\begin{proposition}\label{prop_H(X)tilde-hypersing}
    The branched double cover $\widetilde{H(X)}$ of the Hilbert scheme of a pair of skew lines of a smooth cubic hypersurface $X$ has hypersurface singularities. 
\end{proposition}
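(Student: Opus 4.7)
The approach is a local (analytic/formal) computation at singular points of the blow-up center. The map $\sigma_2:\widetilde{H(X)}\to \Bl_{\Delta_F}(F\times F)$ is the blow-up along $\tilde{D}_F$ in a smooth ambient, so $\widetilde{H(X)}$ is automatically smooth away from $\sigma_2^{-1}(\tilde{D}_F)$ and also over the smooth locus of $\tilde{D}_F$ (the classical fact that blowing up a smooth center in a smooth variety is smooth). Hence the only candidate singularities of $\widetilde{H(X)}$ sit over the singular locus of $\tilde{D}_F$. Fix such a point $p\in \tilde{D}_F$, let $\widehat{A}$ denote the completed local ring of $\Bl_{\Delta_F}(F\times F)$ at $p$, a power series ring of dimension $N:=4n-12$, and let $\widehat{I}\subset \widehat{A}$ be the completion of the ideal of $\tilde{D}_F$. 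The target is to show that any point of $\widetilde{H(X)}$ lying over $p$ has embedding dimension at most $N+1$.

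The preparatory step is to put $\widehat{I}$ in complete-intersection form. By Corollary \ref{cor_reducedness}, $\tilde{D}_F$ has hypersurface singularities, so $\widehat{A}/\widehat{I}\cong k[[y_1,\ldots,y_m]]/(F)$ with $m=3n-8=\dim \tilde{D}_F+1$. The image of $\widehat{I}$ in $\mathfrak{m}/\mathfrak{m}^2$ has dimension $N-\dim T_p\tilde{D}_F=n-4$, so one picks $z_1,\ldots,z_{n-4}\in \widehat{I}$ whose differentials are linearly independent, extends them by $y_1,\ldots,y_m$ to a regular system of parameters of $\widehat{A}$, and observes that the image of $\widehat{I}$ in $\widehat{A}/(z_1,\ldots,z_{n-4})\cong k[[y_1,\ldots,y_m]]$ must be the principal ideal $(F)$. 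Lifting $F$ back yields $\widehat{I}=(z_1,\ldots,z_{n-4},\widetilde{F})$, a generating set of cardinality $n-3=\operatorname{codim}\tilde{D}_F$, and this set is a regular sequence since $\widetilde{F}$ is a non-zerodivisor in $\widehat{A}/(z_1,\ldots,z_{n-4})$.

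The blow-up of $\mathrm{Spec}\,\widehat{A}$ along this complete-intersection ideal embeds into $\mathbb{P}^{n-4}\times \mathrm{Spec}\,\widehat{A}$ with homogeneous coordinates $[s_0:s_1:\cdots:s_{n-4}]$ corresponding to the generators $(\widetilde{F},z_1,\ldots,z_{n-4})$. In the chart $s_0=1$, the relations $z_i=s_i\widetilde{F}$ solve for the $z_i$ and yield a smooth chart isomorphic to $\mathrm{Spec}\, k[[y_1,\ldots,y_m,s_1,\ldots,s_{n-4}]]$. In any chart $s_k=1$ with $k\geq 1$, the relations $z_i=s_iz_k$ for $i\neq k$ eliminate the $z_i$, and the only remaining defining equation is
\begin{equation*}
    \widetilde{F}(y_1,\ldots,y_m)-s_0 z_k=0,
\end{equation*}
a hypersurface inside the smooth $(N+1)$-dimensional ambient with coordinates $(y_1,\ldots,y_m,z_k,s_0,s_1,\ldots,\widehat{s_k},\ldots,s_{n-4})$. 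In either chart the embedding dimension is at most $N+1=\dim\widetilde{H(X)}+1$, which proves the proposition. The main technical obstacle is the preparatory complete-intersection reduction of $\widehat{I}$, which ultimately rests on Corollary \ref{cor_reducedness}; once that normal form is secured, the chart-by-chart blow-up computation is routine.
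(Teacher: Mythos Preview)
Your proof is correct and follows essentially the same route as the paper's: both deduce from Corollary \ref{cor_reducedness} that $\tilde{D}_F$ is a local complete intersection with hypersurface singularities, then argue that blowing up such a center in a smooth ambient yields hypersurface singularities. The paper's three-sentence proof leaves the blow-up step implicit (``the blow-up is locally defined by one equation''), whereas you carry out the chart-by-chart computation explicitly after reducing $\widehat{I}$ to the normal form $(z_1,\ldots,z_{n-4},\widetilde{F})$; this added detail is welcome and the dimension bookkeeping ($N=4n-12$, $m=3n-8$, codimension $n-3$) checks out.
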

\begin{proof}
   By Corollary \ref{cor_reducedness}, the birational morphism $\widetilde{H(X)}\to \Bl_{\Delta_F}(F\times F)$ is a blow-up of smooth variety at a subvariety with hypersurface singularities. Hence the blow-up center is a local complete intersection. Moreover, as in the proof of Proposition \ref{prop_cubicnfold_reduced}, locally, the blow-up center is a hypersurface of a smooth subvariety $Z'$ of an open subspace $\Bl_{\Delta_F}(F\times F)$. Hence, the blow-up is locally defined by one equation, and has hypersurface singularities.
\end{proof}

\subsection{Singularities of incidental subvariety} \label{sec_DFsing}
Recall that $D_F \subseteq F\times F$ is the incidental subvariety (cf. Definition \ref{def_DF}). Now, as a consequence of the reducedness and irreducibility (cf. Corollary \ref{cor_reducedness}), we have a fibered diagram.

\begin{figure}[ht]
    \centering
\begin{equation*}
\begin{tikzcd}
 \tilde{D}_F\arrow[r,hookrightarrow]\arrow[d,"\sigma_F"] & \textup{Bl}_{\Delta_F}(F\times F)\arrow[d,"\sigma"]\\
D_F\arrow[r,hookrightarrow]& F\times F.
\end{tikzcd}
\end{equation*}
\end{figure}

In other words, $\tilde{D}_F$ is the strict transform of $D_F$. Moreover, $\sigma_F$ is a desingularization if $X$ has no higher triple line. We can use the birational morphism $\sigma_F:\tilde{D}_F\to D_F$ to characterize the singularities of $D_F$. 


\begin{proposition}\label{prop_DFsing}
Let $(D_F)^{\textup{sing}}$ denote the singular locus of $D_F$.

\begin{enumerate}
    \item When $\dim(X)=3$, $(D_F)^{\textup{sing}}$ is the locus of triple lines on the diagonal;
    \item When $\dim(X)= 4$,  $(D_F)^{\textup{sing}}=\Delta(F_2)$ is the locus of the lines of the second type;
    \item When $\dim(X)\ge 5$, $(D_F)^{\textup{sing}}=\Delta_F$ is the whole diagonal. 
\end{enumerate}
\end{proposition}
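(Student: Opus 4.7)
The plan is to analyze $T_{(L,L)}D_F$ at each point $(L,L)\in\Delta_F\cap D_F$, treating the three cases by $\dim X$. By Proposition \ref{prop_typeIII}, $D_F$ is smooth off the diagonal, so this covers the entire singular locus.

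For Case $\dim X=3$, Remark \ref{remark_Delta&DF} gives $\Delta_F\cap D_F=\Delta(F_2)$, so only second-type diagonal points need analysis. I extend the substitution computation from Section \ref{sec_DFtilde}: after solving the $F$-equations \eqref{eqn_F2ndtype} for $x_{13}, x_{14}, x_{23}, x_{24}$ in terms of the free coordinates $x_{15},x_{25}$ (and similarly for $y$-variables), and substituting into the $D$-equations $u_{1s}u_{2t}=u_{1t}u_{2s}$ via the expansions of Example \ref{example_degree2}, the three resulting generators of $I((F\times F)\cap D)$ in the local ring factor with the common linear factor
\[
P = 2(a_{44}^0 x_{25} + a_{44}^1 x_{15}) + a_{44}^0 u_{25} + a_{44}^1 u_{15}.
\]
Since $((F\times F)\cap D)_{\mathrm{red}}=D_F\cup\Delta_F$ decomposes set-theoretically with $P\notin I(\Delta_F)$, the local ideal of $D_F$ is generated by a function with linear part $P$. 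Hence $D_F$ is smooth at $(L,L)$ iff $(a_{44}^0, a_{44}^1)\neq(0,0)$, iff $L$ is not a triple line by Definition \ref{def_tripleline}.

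For the cases $\dim X\ge 4$, $\Delta_F\subseteq D_F$ so any linear form in $I(D_F)$ lies in $I(\Delta_F)=(u_{ij})$; I compute the tangent cone by imposing the $F$-linearization on the $D$-equations and analyzing the resulting initial ideal. For a first-type $L$, the linearization gives $u_{23}=-u_{14}$, $u_{24}=-u_{15}$, $u_{13}=u_{25}=0$, and the $2\times 2$ minors on the first three columns of the matrix $U$ produce initial quadrics $u_{14}^2$ and $u_{15}^2$; a radical analysis parallel to the $P$-computation in Case 1, using the second-order $\phi$-corrections of $F$'s equations, identifies two independent linear elements $u_{14}, u_{15}$ (modulo higher order) in $I(D_F)=\sqrt{I((F\times F)\cap D)}$, yielding $\dim T_{(L,L)} D_F = 4n-14$. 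For a second-type $L$, the linearization forces $u_{13}=u_{14}=u_{23}=u_{24}=0$, so the first two columns of $U$ vanish and the rank-$\le 1$ condition on the remaining $2\times(n-3)$ block produces only quadratic relations whose span is the full matrix space (for $n-3\ge 2$); no new linear form arises in $I(D_F)$, giving $\dim T_{(L,L)} D_F = 4n-12$. Comparing with $\dim D_F=3n-9$: first type is smooth iff $4n-14=3n-9$, i.e.\ $n=5$ (so $\dim X=4$); second type is always singular. This yields cases (2) and (3).

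The main obstacle is the radical computation for first-type $L$: showing that $u_{14}, u_{15}$ (and not merely $u_{14}^2, u_{15}^2$) lie in $I(D_F)$. Concretely, this amounts to exhibiting elements of $I((F\times F)\cap D)$ whose linear initial parts are exactly $u_{14}$ and $u_{15}$, arising from the interaction between $F$'s $\phi$-corrections at a first-type line and the $D$-equations, in direct analogy with the factorization producing $P$ in Case 1. The algebraic identity is more intricate here than in the cubic threefold case because it must be extracted after a nontrivial radical computation, rather than directly from the generator structure.
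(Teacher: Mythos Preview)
Your approach and the paper's diverge substantially. The paper does not attempt to compute $T_{(L,L)}D_F$ directly; instead it cites the literature for cases (1) and (2), and for $\dim X\ge 4$ argues via the projectivized normal cone. Since $\tilde D_F$ is reduced and irreducible (Corollary~\ref{cor_reducedness}), it is the strict transform of $D_F$, so its fibre over $(L,L)\in\Delta_F$ is the projectivized normal cone $\mathbb P(C_{\Delta_F}D_F)_{(L,L)}$. That fibre was already computed in Proposition~\ref{prop_DFtilde-fiber} to be $V_{n-3}$ (first type) or $V_{n-2}$ (second type), each nondegenerate in its ambient $\mathbb P^{2n-9}$ resp.\ $\mathbb P^{2n-7}$. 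Since the Zariski tangent space always contains the linear span of the tangent cone, one gets the \emph{lower} bound $\dim T_{(L,L)}D_F\ge (2n-8)+\dim\Delta_F=4n-14$ at a first-type line, and $4n-12$ at a second-type line. Comparing with $\dim D_F=3n-9$ forces singularity whenever $n\ge 6$ (first type) or $n\ge 5$ (second type), which is exactly case~(3) and the ``$\supseteq$'' half of case~(2). No radical computation is needed for these conclusions.

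What your proposal calls ``the main obstacle''---producing elements of $I(D_F)$ with linear parts $u_{14},u_{15}$ at a first-type line---is equivalent to the \emph{upper} bound $\dim T_{(L,L)}D_F\le 4n-14$, i.e.\ to smoothness at first-type lines when $n=5$. The paper does not prove this either; it is absorbed into the citation of \cite{Franco} for case~(2). So your proposal is genuinely more ambitious on this point, but the step you flag as incomplete is not one you can expect to resolve by a ``radical analysis parallel to Case~1'': having $u_{14}^2$, $u_{15}^2$ as initial forms of elements of $I((F\times F)\cap D)$ does not by itself put $u_{14},u_{15}$ into $\sqrt{I}$ modulo $\mathfrak m^2$ (think of the cusp $x^2-y^3$), and the analogue of the factorization you found in Case~1 does not have an obvious counterpart here. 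Your Case~1 argument, by contrast, is a clean self-contained computation that the paper simply outsources to \cite{CG} and \cite{triplelines}; that part is a real contribution.
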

\begin{proof}
(1) and (2) follow from \cite[Lem. 12.18]{CG}, \cite{triplelines} and \cite[Thm. 4.3.1.2]{Franco}. 

We assume $\dim(X)=n-1\ge 4$. Hence $\Delta_F\subseteq D_F$ by Remark \ref{remark_Delta&DF}. The fiber of $\tilde{D}_F\to D_F$ over a line of the first type is $V_{n-3}$, the Segre embedding $\mathbb P^1\times \mathbb P^{n-5}\hookrightarrow \mathbb P^{2n-9}$ (cf. Proposition \ref{prop_DFtilde-fiber}). It is a nondegenerate variety, so the tangent space of $D$ at $(L, L)$ normal to the diagonal is the ambient vector space $\C^{2n-8}$. Since $2n-8+\dim(\Delta_F)\ge \dim(D_F)$ (cf. Corollary \ref{cor_codimDF}), and the equality holds iff $n=5$. We conclude that $D_F$ is smooth at $(L, L)$ when $\dim(X)=n-1=4$ and singular otherwise. Since lines of the first type is dense in $F$,  this implies (3). A similar analysis at the line of 2nd type implies (2).
\end{proof}

\section{Dimension Count}\label{sec_dimcount}

This section is a continuation of Section \ref{sec_highertriplelines}. We will bound the dimension of the space of cubic hypersurfaces with a higher triple line and prove Proposition \ref{prop_dim-count}, which indicates the main Theorem \ref{thm_main}.

\subsection{Transversal $A_2$ Singularities}
We want to understand the geometric meaning of the degeneracy condition of the matrix of linear forms \eqref{eqn_Lmatrix}.

The linear dependence of the columns implies that we can choose new coordinates $x_4,\ldots,x_n$ such that the two matrices \eqref{eqn_2nequtions6} have the last row being zero (hence the last column is zero as well). In other words, in equation \eqref{eqn_cubicnfold@2ndtype} of the cubic hypersurface, there is no term that involves the monomials
$$x_px_ix_n,\ p=0,1,\ \textup{and}\  i=4,\ldots,n.$$

This implies that the codimension two linear section of the cubic hypersurface $X$ given by $P^{n-2}=\{x_2=x_3=0\}$ is a cubic $(n-3)$-fold with equation
\begin{equation}\label{eqn_cubictransversalA2}
    \sum_{4\le i,j\le n-1}x_ix_jL_{ij}(x_0,x_1)+C(x_4,\cdots,x_n)=0.
\end{equation}

Now, the codimension-two subvariety $Y=X\cap P^{n-2}$ is singular along the line $L_{x_0,x_1}$ and has \textit{transversal $A_2$ singularities} along the line.

For example, when $\dim(X)=3$, $P^2\cap X=3L$ is a triple line; when $\dim(X)=4$, $P^3\cap X$ is a cone over a cuspidal plane curve (cf. figure \ref{figure_coneofcusp}). 

To provide a dimension count of cubic hypersurfaces with a higher triple line, we first need to count the dimension of the cubics of the form \eqref{eqn_cubictransversalA2}. We first need a linear algebra argument:

\begin{lemma}\label{lemma_Sdet=0}
    Suppose that the matrix $S$ degenerates (cf. Definition \ref{def_htL}), then $\det(S)=0$ as a homogeneous polynomial in $x_0,x_1$.  
\end{lemma}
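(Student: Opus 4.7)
The plan is to unpack the definition of degeneracy and observe that it gives a constant vector in the kernel of $S$, valid at every specialization of $(x_0,x_1)$, which forces the determinant to vanish identically.

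First, I would recall that, by Definition \ref{def_htL}, saying $S$ degenerates means that the columns of $S$, viewed as vectors with entries in the $\mathbb{C}$-vector space $\mathbb{C}[x_0,x_1]_1$ of linear forms, are linearly dependent over $\mathbb{C}$. Equivalently, there exists a nonzero constant vector $v=(v_4,\ldots,v_n)^T\in\mathbb{C}^{n-3}$ such that
\[
S\cdot v \;=\; 0 \quad\text{in }\mathbb{C}[x_0,x_1]^{\,n-3}.
\]

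Next, I would specialize: since $Sv=0$ holds as an identity of polynomial vectors, it holds at every point $(x_0,x_1)\in\mathbb{C}^2$. Thus for every $(x_0,x_1)$ the numerical matrix $S(x_0,x_1)\in\mathrm{Mat}_{(n-3)\times(n-3)}(\mathbb{C})$ has the fixed nonzero vector $v$ in its kernel, so $S(x_0,x_1)$ is singular, and
\[
\det\bigl(S(x_0,x_1)\bigr) \;=\; 0 \quad\text{for all }(x_0,x_1)\in\mathbb{C}^2.
\]
Since $\det(S)$ is a polynomial in $\mathbb{C}[x_0,x_1]$ (in fact homogeneous of degree $n-3$) that vanishes on all of $\mathbb{C}^2$, it is the zero polynomial, which is the claim.

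There is essentially no obstacle: the only thing to be careful about is distinguishing the two natural meanings of ``$S$ is degenerate'' — linear dependence of its columns as polynomial vectors over $\mathbb{C}$ versus pointwise linear dependence at each $(x_0,x_1)$. The definition adopted in the paper is the former, and that is the stronger condition; the lemma merely records the implication from the former to the pointwise/determinantal form, which is automatic by specialization.
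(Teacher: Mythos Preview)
Your proof is correct. The paper gives two arguments: its primary one changes coordinates so that the last row and column of $S$ vanish (as described just before the lemma), making $\det(S)=0$ immediate; it also mentions a coordinate-free alternative via Cramer's rule. Your specialization argument is a clean variant of the latter---rather than invoking the adjugate identity $\operatorname{adj}(S)\cdot S=\det(S)\,I$ over the polynomial ring $\mathbb{C}[x_0,x_1]$, you evaluate at every point of $\mathbb{C}^2$ and conclude that the polynomial $\det(S)$ vanishes identically. Both routes are equally short; yours has the minor advantage of not requiring any preliminary coordinate change or algebraic identity.
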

\begin{proof}
By changing the coordinates described above, one can assume that the matrix of the linear forms \eqref{eqn_Lmatrix} has vanishing last row and column. Therefore, $\det(S)=0$. Alternatively, a coordinate-free proof can be given using Cramer's rule \cite[XIII, Thm. 4.4]{Lang}. 
\end{proof}

It is not clear to us if the converse holds, but Lemma \ref{lemma_Sdet=0} at least provides an estimation of the number of relations. For example, if $S$ has size $(n-3)\times (n-3)$, then $\det(S)$ is a homogeneous polynomial of degree $n-3$, therefore $\det(S)=0$ provides $n-2$ conditions.

\begin{lemma}\label{lemma_transversalA2count}
    The space of cubic hypersurfaces in $\mathbb P^{n-2}$ with transversal $A_2$ singularities along a line forms a subspace of codimension at least $2n-1$.
\end{lemma}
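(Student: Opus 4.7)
The plan is to use an incidence correspondence over the Grassmannian $\mathrm{Gr}(2, n-1)$ of lines in $\mathbb P^{n-2}$, augmented with a choice of transverse ``cusp'' direction, and to bound the dimension of the projection of this incidence to $|\mathcal O_{\mathbb P^{n-2}}(3)|$.

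First I would fix a line $L\subseteq\mathbb P^{n-2}$ and a transverse direction $v\in\mathbb P(\mathbb C^{n-1}/L)\cong\mathbb P^{n-4}$, normalized so that $L=\{x_4=\cdots=x_n=0\}$ and $v$ is the direction of $x_n$. By the coordinate change argument preceding the lemma, any cubic with transverse $A_2$ along $L$ takes the form \eqref{eqn_cubictransversalA2} for a suitable $v$. I then count the free parameters in this form: the symmetric matrix $(L_{ij})_{4\le i,j\le n-1}$ of linear forms in $x_0,x_1$ contributes $(n-3)(n-4)$ coefficients (it has $\binom{n-3}{2}$ independent entries, each with $2$ coefficients), and the cubic $C(x_4,\ldots,x_n)$ in $n-3$ variables contributes $\binom{n-1}{3}$ coefficients. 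Subtracting from the total $\binom{n+1}{3}$ and using the identity $\binom{n+1}{3}-\binom{n-1}{3}=(n-1)^2$, the codimension for fixed $(L,v)$ is
\[
(n-1)^2-(n-3)(n-4)=5n-11.
\]

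Varying $(L,v)$ over the $\mathbb P^{n-4}$-bundle over $\mathrm{Gr}(2,n-1)$, whose total dimension is $(2n-6)+(n-4)=3n-10$, I would form the incidence variety
\[
\mathcal I=\{(X,L,v):X\text{ has transverse }A_2\text{ along }L\text{ with kernel direction }v\}
\]
and obtain $\dim\mathcal I\le \dim|\mathcal O(3)|-(5n-11)+(3n-10)=\dim|\mathcal O(3)|-(2n-1)$. The image in $|\mathcal O(3)|$ under the first projection then has codimension at least $2n-1$ in the full linear system, which is the claim.

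The step requiring care is the justification of the coordinate normalization: one needs the kernel direction of the quadratic form on the transverse variables to be a well-defined constant direction along $L$, so that $v$ can legitimately be introduced as a finite-dimensional parameter. This is precisely the column-degeneracy condition of Definition~\ref{def_htL} supplying a common null vector, and is exactly what is used in the coordinate change in the discussion preceding the lemma; granting this identification, the parameter count above completes the bound.
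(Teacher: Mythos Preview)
Your argument is correct and reaches the same bound $2n-1$, but the route differs from the paper's. The paper fixes only the line $L$, imposes the containment and singularity conditions (giving $4+3(n-3)$ conditions), and then handles the transversal $A_2$ requirement by invoking Lemma~\ref{lemma_Sdet=0}: the degeneracy of the matrix $S$ forces $\det S=0$ identically in $x_0,x_1$, which contributes $n-2$ further conditions. Varying $L$ over $\mathrm{Gr}(2,n-1)$ of dimension $2n-6$ then gives $(4n-7)-(2n-6)=2n-1$.

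Your approach enlarges the incidence by the kernel direction $v$ and counts parameters in the normal form~\eqref{eqn_cubictransversalA2} directly, obtaining $(5n-11)-(3n-10)=2n-1$. The two counts are reconciled by the identity $(5n-11)-(4n-7)=n-4=\dim\mathbb P^{n-4}$: fixing $v$ costs exactly the $2(n-3)$ conditions for the $n$-th row and column of $S$ to vanish, rather than the $n-2$ determinant conditions, and the discrepancy is absorbed by varying $v$. Your method bypasses Lemma~\ref{lemma_Sdet=0} entirely, at the price of a larger parameter base; the paper's method stays over the Grassmannian but needs the determinant lemma as an intermediate step. Either way the bound is only an inequality, so the possible non-uniqueness of $v$ (when the common kernel has dimension $>1$) is harmless for your upper bound on $\dim\mathcal I$.
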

\begin{proof}

Let $Y\in \mathbb P(Sym^3\mathbb C^{n-1})$ be a cubic hypersurface in $\mathbb P^{n-2}$. We require $Y$ to have transversal $A_2$ singularities along a line. Then it imposes the following conditions
\begin{enumerate}
    \item $Y$ contains the line $L=\{x_2=\cdots=x_{n-2}=0\}$ requires the vanishing of the monomials $x_0^3,x_0^2x_1,x_0x_1^2,x_1^3$ in the defining equation of $Y$, which gives four conditions.
    \item $Y$ is singular along $L$ forces the vanishing of the terms $l_{ij}(x_2,\ldots,x_{n-2})x_0^ix_1^j$, with $i+j=2$ and $l_{ij}$ a linear form. Hence, it gives $3(n-3)$ conditions.
    \item The matrix of linear forms $S$ degenerates implies $\det(S)=0$ (cf. Lemma \ref{lemma_Sdet=0}), which provides another $n-2$ conditions.
\end{enumerate}

 Therefore, (1), (2), and (3) together give $4+3(n-3)+(n-2)=4n-7$ conditions. We allow the line $L$ to move in $\mathbb P^{n-2}$. It is parameterized by $Gr(2,n-1)$, which has dimension $2(n-3)$, so the space of cubic hypersurfaces that have transversal $A_2$ singularities along a line is a closed subspace of $\mathbb P(Sym^3\mathbb C^{n-1})$ with codimension at least
$$(4n-7)-2(n-3)=2n-1.$$
\end{proof}


\begin{remark}\normalfont
    The bound in Lemma \ref{lemma_transversalA2count} is sharp when $\dim(X)=3$ and 4.
\end{remark}

\begin{example}\normalfont
\begin{itemize}
    \item In $\mathbb P^2$, a cubic curve with transversal $A_2$ along a line is three times of a line $3L$, nonreduced. They form a codimension 7 subspace of all cubic curves.
    \item In $\mathbb P^3$, a cubic surface with transversal $A_2$ along a line is a cone over a cuspidal cubic curve, which is non-normal (cf. Remark \ref{remark_hierarchy}). Such cubic surfaces form a codimension 9 subspace of all cubic surfaces.
    \item In $\mathbb P^4$, a cubic threefold with transversal $A_2$ singularities along a line is normal (if there are no other singularities). Such cubics form a subspace of codimension (at least) 11 of all cubic threefolds.
\end{itemize}
\end{example}

\subsection{Correspondence} Now we use intersection correspondence to bound the dimension of the cubic hypersurface with a higher triple line.
\begin{proposition}\label{prop_generalsmoothdiagonal}
    The set of smooth cubic hypersurfaces in $\mathbb P^n$, with $n\ge 4$, which have higher triple lines, forms a closed subspace of codimension at least one. 
\end{proposition}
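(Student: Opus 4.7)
The plan is an incidence-correspondence argument whose key input is Lemma \ref{lemma_transversalA2count}. I would consider
\[\tilde I := \{(X, L, P) : L \subset P,\ X \cap P \text{ has transversal } A_2 \text{ along } L\} \subset \mathbb P(\Sym^3 \C^{n+1}) \times \mathcal F,\]
where $\mathcal F = \{(L, P) : L \subset P\}$ is the flag variety of pairs consisting of a line $L \subset \mathbb P^n$ and a codimension-two linear subspace $P$ containing $L$, with $\pi_1, \pi_2$ the two projections.

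The first step is to check that $\pi_1(\tilde I)$ contains every smooth cubic $X$ admitting a higher triple line $L$. By definition $L$ is of the second type, so Lemma \ref{lemma_(n-3)plane} yields a unique codim-two plane $P$ tangent to $X$ along $L$; the analysis preceding \eqref{eqn_cubictransversalA2} shows that in suitable coordinates the slice $X \cap P$ takes the form \eqref{eqn_cubictransversalA2}, which is exactly the condition of transversal $A_2$ along $L$. Hence $(X, L, P) \in \tilde I$, so it suffices to bound the codimension of $\pi_1(\tilde I)$ from below.

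Next I bound $\dim \tilde I$ via the projection $\pi_2$. The flag $\mathcal F$ has dimension $\dim Gr(2, n+1) + \dim Gr(n-3, n-1) = (2n - 2) + (2n - 6) = 4n - 8$, where the second Grassmannian parameterizes codim-two planes through a fixed line via $\C^{n+1}/V_L$. For fixed $(L, P) \in \mathcal F$, the condition on $X$ depends only on the restriction $X|_P \in \Sym^3 \C^{n-1}$, and by the counts (1), (2), (3) in the proof of Lemma \ref{lemma_transversalA2count}, the locus of such restrictions is cut out by at least $4 + 3(n-3) + (n-2) = 4n - 7$ conditions; since the restriction map $\Sym^3 \C^{n+1} \twoheadrightarrow \Sym^3 \C^{n-1}$ is surjective, the fiber of $\pi_2$ has codimension at least $4n - 7$ inside $\mathbb P(\Sym^3 \C^{n+1})$.

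Combining these bounds,
\[\dim \tilde I \;\le\; (4n-8) + \left[\tbinom{n+3}{3} - 1 - (4n-7)\right] \;=\; \tbinom{n+3}{3} - 2 \;<\; \dim \mathbb P(\Sym^3 \C^{n+1}),\]
so $\pi_1(\tilde I)$, which is closed by properness of $\mathcal F$, has codimension at least one in the cubic moduli space; intersecting with the nonempty open locus of smooth cubics yields the proposition. The only slightly nontrivial step is the identification of $\pi_1(\tilde I)$ with the higher-triple-line locus, handled above using Lemmas \ref{lemma_(n-3)plane} and \ref{lemma_Sdet=0}; once that is in place the dimension count is essentially mechanical and I anticipate no serious obstacle.
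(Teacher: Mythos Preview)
Your proof is correct and follows essentially the same incidence-correspondence argument as the paper; the only cosmetic difference is that you parametrize by the flag $(L,P)$ and invoke the raw count $4n-7$ from the proof of Lemma~\ref{lemma_transversalA2count}, whereas the paper parametrizes by $P$ alone and applies the lemma's conclusion (codimension $\ge 2n-1$) directly. The arithmetic matches: $(4n-8)-(4n-7)=-1$ in your version and $2(n-1)-(2n-1)=-1$ in the paper's, yielding the same codimension-one bound.
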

\begin{proof}
Let $W$ be the space of all cubic $(n-3)$-fold in $\mathbb P^n$. Then $\pi: W\to Gr(n-1,n+1)$ is a $\mathbb P(Sym^3\mathbb C^{n-1})$-bundle, with fiber over $P^{n-2}\subseteq \mathbb P^n$ being the spaces of cubic hypersurfaces in $P^{n-2}$. Then, by Lemma \ref{lemma_transversalA2count}, and the bundle structure, the locus $\mathcal{C}$ consisting of cubic $(n-3)$-fold in $\mathbb P^n$ with transversal $A_2$ singularities along a line has codimension at least $2n-1$.

We consider the incidence map
$$\phi:\mathbb P(Sym^3\C^{n+1})\times Gr(n-1,n+1)\to W,$$
$$(X,P^{n-2})\mapsto X\cap P^{n-2}, $$
by intersecting a cubic hypersurface $X$ and a codimension-two plane $P^{n-2}$ in $\mathbb P^n$. 

Then $\phi$ is subjective. We use the fact that the space of $\mathbb P^{n-2}$ in $\mathbb P^n$ has dimension $\dim Gr(n-1,n+1)=2(n-1)$. Therefore, $pr_1(\phi^{-1}(\mathcal{C}))$ has codimension at least $$(2n-1)-2(n-1)=1$$ in the space of all cubic hypersurfaces in $\mathbb P^n$. Hence, the claim follows.
\end{proof}

\begin{corollary} (cf. Theorem \ref{thm_main})
Let $X$ be a general cubic hypersurface with $\dim (X)\ge 3$. The Hilbert scheme of a pair of skew lines $H(X)$ is smooth. 
\end{corollary}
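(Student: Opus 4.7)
The statement is a direct synthesis of the results developed throughout the paper, so the plan is simply to assemble them, separating the dimension-three case from the higher-dimensional case.

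First I would dispose of the case $\dim(X)=3$ by appealing to Corollary \ref{cor_cubic3fold-Hsmooth}: for every smooth cubic threefold (not just a general one), $\tilde{D}_F$ has codimension one in $\textup{Bl}_{\Delta_F}(F\times F)$, and reducedness together with smoothness of the ambient scheme makes it a Cartier divisor. Blowing up a Cartier divisor is an isomorphism, so $\widetilde{H(X)}\cong \textup{Bl}_{\Delta_F}(F\times F)$; passing to the $\Z_2$-quotient gives $H(X)\cong \textup{Bl}_{\Delta_F}\Sym^2F$, which is smooth. No genericity assumption is needed here.

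For $\dim(X)\ge 4$ I would invoke Theorem \ref{thm_mainprecisebody}, which asserts that $H(X)$ is smooth if and only if $X$ contains no higher triple line. It then suffices to show that a general cubic hypersurface in $\mathbb{P}^n$ (with $n\ge 5$) has no higher triple line, and this is precisely Proposition \ref{prop_generalsmoothdiagonal} (equivalently Proposition \ref{prop_Intro_general}). Combining the two yields smoothness of $H(X)$ for a general such $X$.

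There is essentially no hard step left; the corollary is a packaging statement. The only subtle point worth mentioning is ensuring the genericity hypothesis covers both ingredients simultaneously: the locus of smooth cubic hypersurfaces is Zariski open, and by Proposition \ref{prop_generalsmoothdiagonal} the locus of cubic hypersurfaces containing a higher triple line is contained in a proper closed subset of the space $\mathbb{P}(\textup{Sym}^3\mathbb{C}^{n+1})$ of all cubics. Hence the complement, consisting of smooth cubics with no higher triple line, is a nonempty Zariski open set; for every $X$ in this set, Theorem \ref{thm_mainprecisebody} applies and gives smoothness of $H(X)$.
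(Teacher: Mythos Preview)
Your proposal is correct and follows essentially the same approach as the paper, which simply cites Theorem \ref{thm_mainprecisebody} and Proposition \ref{prop_generalsmoothdiagonal}. If anything, you are slightly more careful: the paper's two-line proof only literally covers $\dim(X)\ge 4$, whereas you explicitly invoke Corollary \ref{cor_cubic3fold-Hsmooth} to handle the threefold case.
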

\begin{proof}
  This follows from Theorem \ref{thm_mainprecisebody} and Proposition \ref{prop_generalsmoothdiagonal}.
\end{proof}

\section{Interpreting Singularities of the Hilbert Scheme}\label{sec_singualarH(X)}
In this section, we provide a modular meaning of the singularities of $\tilde{D}_F$ and $H(X)$. 

To summarize what we have proved, by descending the first column of \eqref{eqn_intro-diagram} to the $\Z_2$ quotient, $H(X)$ arises from successive blowup
$$\Bl_{\tilde{D}_F'}\Bl_{\Delta_F}\Sym^2F\to \Bl_{\Delta_F}\Sym^2F\to \Sym^2F$$
along the diagonal $\Delta_F$, and $\tilde{D}_F'$, which is the $\Z_2$ quotient of $\tilde{D}_F$. 

From Proposition \ref{prop_Jmatrixdegenerate}, we see that the singular locus of $\tilde{D}_F$ is on the diagonal fixed by $\Z_2$ action, hence we can identify the singular locus of $\tilde{D}_F'$ and $\tilde{D}_F$, which is fibered over locus of higher triple lines on the diagonal $\Delta_F$ and the fiber is at least a copy of $\mathbb P^1$. Then, according to the description of type (IV) schemes in Section \ref{sec_TypeII&IV}, we have

\begin{proposition}
When $\dim(X)\ge 4$, each singularity of $H(X)$ corresponds to a triple $$(p,L,P^3)$$ 
where $L$ is a higher triple line $L$, $p\in L$ is a point, and $P^3$ is a linear 3-dimensional subspace of $P^n$ containing $L$ and a normal direction $v\in H^0(\mathcal{O}_L(1))$, with $\mathcal{O}_L(1)\subseteq N_{L|X}$. This data determines a type (IV) subscheme $Z_{IV}$ supported on $L$, and an embedded point supported on $p$ and contained in $P^3$.
\end{proposition}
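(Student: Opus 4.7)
The plan is to trace through the successive blow-ups displayed in \eqref{eqn_intro-diagram} and identify, at each stage, the modular data contributed to a singular point of $H(X)$. First I would pass to the branched double cover $\widetilde{H(X)}$. By Theorem \ref{thm_summarySec6}, the singular locus of $\tilde{D}_F$ lies over higher triple lines on the diagonal $\Delta_F$, and so over the fixed locus of the $\Z_2$ action; hence singularities of $H(X)$ correspond identically to those of $\widetilde{H(X)}$ above the diagonal, and it suffices to describe the latter.

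Second, since $\Bl_{\Delta_F}(F\times F)$ is smooth and the blow-up center $\tilde{D}_F$ is reduced (Proposition \ref{prop_cubicnfold_reduced}), the singular locus of $\widetilde{H(X)} = \Bl_{\tilde{D}_F}\Bl_{\Delta_F}(F\times F)$ is contained in the exceptional divisor of the second blow-up and maps onto the singular locus of $\tilde{D}_F$. By Theorem \ref{thm_summarySec6} this singular locus is supported precisely over diagonal points $(L,L)$ with $L$ a higher triple line. Using Proposition \ref{prop_DFtilde-fiber}, a point in the fiber of $\tilde{D}_F$ over such $(L,L)$ is a point of $V_{n-2}\cong\mathbb{P}^1\times\mathbb{P}^{n-4}$, which I would interpret — exactly as in Section \ref{sec_TypeII&IV} — as the choice of a point $p \in L$ together with the projectivization $[v]\in \mathbb{P}(H^0(\mathcal{O}_L(1)))$ of a normal direction for the sub-line-bundle $\mathcal{O}_L(1)\subseteq N_{L|X}$. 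Together these determine the plane $P^2=\textup{Span}(L,v)\subseteq X$ tangent to $X$ along $L$.

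Third, the fiber of the second blow-up $\widetilde{H(X)}\to\Bl_{\Delta_F}(F\times F)$ over the point $(p,L,[v])\in\tilde{D}_F$ is, in complete analogy with the Type (III) description of Section \ref{sec_typeIII}, the projective space of linear 3-planes $P^3$ satisfying $P^2\subseteq P^3\subseteq T_pX$. The embedded point in the associated subscheme is then recovered as the length-two scheme at $p$ cut out by the square of the ideal $I_{p,P^3}$. Assembling the three layers, every singular point of $H(X)$ is labelled by a triple $(p,L,P^3)$ of the stated form, and this triple reconstructs the type (IV) subscheme $Z_{IV}$ whose embedded point is supported at $p$ and contained in $P^3$.

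The delicate step is ensuring that no additional component appears in the exceptional fiber of the second blow-up above a singular point of $\tilde{D}_F$, so that the correspondence is well-defined and the fiber dimension matches the $\mathbb{P}^{n-4}$ of admissible $P^3$. This is where Proposition \ref{prop_H(X)tilde-hypersing} becomes essential: locally near such a singular point, $\tilde{D}_F$ is cut out by a single equation in a smooth subvariety of $\Bl_{\Delta_F}(F\times F)$, so the exceptional fiber is a single projective space of the expected dimension. Combined with Corollary \ref{cor_H(X)smoothoffdiagonal} (which rules out type (I) and (III) singularities) and the smoothness at type (II) points noted in Section \ref{sec_TypeII&IV}, this identifies the singularities of $H(X)$ bijectively with triples $(p,L,P^3)$ arising from higher triple lines, completing the modular interpretation.
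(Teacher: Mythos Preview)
Your proposal is correct and follows the same route as the paper: locate the singularities of $H(X)$ over $\mathrm{Sing}(\tilde D_F)$ via Theorem~\ref{thm_summarySec6} and Proposition~\ref{prop_Jmatrixdegenerate}, then read off the modular meaning from the type (IV) description in Section~\ref{sec_TypeII&IV}. The paper's argument is in fact only the two sentences preceding the proposition, so your write-up supplies considerably more detail than the original.

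One remark on your ``delicate step'': the concern about the exceptional fibre of $\sigma_2$ over a singular point of $\tilde D_F$ is legitimate and your use of Proposition~\ref{prop_H(X)tilde-hypersing} to resolve it is correct (locally $\tilde D_F$ is a hypersurface in a smooth $Z'$, so the ideal has exactly $n-3$ generators and the fibre is $\mathbb P^{n-4}$ regardless of whether the point is smooth). But strictly speaking this step is not needed for the proposition as stated. Since $H(X)$ is a Hilbert scheme, every point of it \emph{is} a subscheme of $X$; once Corollary~\ref{cor_H(X)smoothoffdiagonal} and the type~(II) smoothness rule out types (I)--(III), any singular point must already be a type~(IV) scheme $Z_{IV}$, and the data $(p,L,P^3)$ is then read off from $Z_{IV}$ itself, without reference to the blow-up fibre. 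The constraint that $L$ be a higher triple line comes, as you say, from the fact that $\sigma_2$ carries $\mathrm{Sing}(\widetilde{H(X)})$ into $\mathrm{Sing}(\tilde D_F)$.
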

Here, the 3-plane $P^3$ determines the normal direction of the embedded point to $P^2$ and vice versa. More generally, there is a morphism
$$\pi: H(X)\to Gr(4,n+1)$$
by assigning each $Z\in H(X)$ to the unique 3-plane $\pi(Z)\cong \mathbb P^3$ containing $Z$. 

One may ask the following question.
\begin{question}\label{question_piZ}
  Suppose $Z_{IV}\in H(X)$ is a singular point. How to describe the 3-plane $\pi(Z_{IV})$ containing $Z_{IV}$?
\end{question}

If $Z_{IV}\in H(X)$ is a singularity, then the 3-plane $\pi(Z_{IV})$. Suppose it is not contained in $X$ (this is the case when $\dim(X)\le 4$), then the intersection $\pi(Z_{IV})\cap X$ is a cubic surface, which also contains $Z_{IV}$ as a closed subscheme. We denote it by $C$. We observe that
\begin{proposition}\label{prop_singcubicsurface}
    The cubic surface $C$ is either 
    \begin{itemize}
        \item  a cone of a planer cuspidal cubic curve, or 
        \item the union of a plane and a quadric cone meeting tangentially along $L$.
    \end{itemize}
\end{proposition}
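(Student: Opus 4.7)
The plan is to analyze $C=\mathbb{P}^3_Z\cap X$ in the explicit local coordinates developed in Sections \ref{sec_general}--\ref{sec_singualarH(X)}, splitting into two cases according to whether the triple plane $P^2$ associated to the data $(p,L,P^3)$ lies in $X$.

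First, I would place $L=\{x_2=\cdots=x_n=0\}$ and write $F$ in the normal form \eqref{eqn_cubicnfold@2ndtype}. The higher triple line condition on $L$, after a linear change of coordinates in $(x_4,\ldots,x_n)$, makes $P^2=\mathrm{span}(L,e_4)$ the designated triple plane and forces $L_{4j}(x_0,x_1)=0$ for all $j\geq 4$ (cf.\ Proposition \ref{prop_Jmatrixdegenerate}). Under this normalization, $F|_{P^2}=c\cdot x_4^3$, where $c$ is the coefficient of $x_4^3$ in the pure cubic $C(x_2,\ldots,x_n)$; so $P^2\subseteq X$ iff $c=0$, and otherwise $P^2\cap X=3L$. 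I would then parametrize $P^3=P^2\oplus\langle w\rangle$ with $w\in T_pX$ (which imposes $p_0^2w_2+p_1^2w_3=0$) and substitute into $F$, using $L_{4j}=0$ to simplify.

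Case $c=0$: the substitution shows $F|_{P^3}$ is divisible by the linear form $\sigma$ cutting out $P^2$ inside $P^3$, so $F|_{P^3}=\sigma\cdot G$ and $C=P^2\cup Q$ with $Q=\{G=0\}$. A direct gradient computation gives $\nabla G|_L\propto L_{55}(\mu_0,\mu_1)\cdot d\sigma$, so (using smoothness of $X$ to rule out $L_{55}\equiv 0$) the tangent plane to $Q$ at every generic point of $L$ is $P^2$, and $Q\cap P^2=2L$, confirming tangential contact along $L$. To see $Q$ is a cone, I would show that after a linear change of $(\mu_0,\mu_1)$ making $L_{55}$ a single coordinate, the equation of $G$ becomes independent of the complementary coordinate, giving a vertex on $L$.

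Case $c\neq 0$: here $C$ is irreducible with $C\cap P^2=3L$, and I would use the explicit expression of $F|_{P^3}$ to identify a coordinate that drops out after a linear change of $(\mu_0,\mu_1)$, producing the cone structure with vertex on $L$. The base cubic in the quotient $\mathbb{P}^2$ can then be written and its local analytic form at the image of $L$ computed; completing the square in one variable yields $s^2+(\text{nonzero})\nu^3=0$, i.e.\ an $A_2$ cusp. The main obstacle is this last case when the extension direction $w$ is not contained in $P^{n-2}$: the $(x_2,x_3)$ components of $w$ contribute mixed terms to $F|_{P^3}$ that are not directly annihilated by the higher triple line condition. Handling this requires combining the constraint $p_0^2w_2+p_1^2w_3=0$, the smoothness of $X$, and the characterization of which $P^3$'s actually correspond to a singular $Z\in H(X)$ (rather than all $P^3\supseteq P^2$ in $T_pX$), to show that the extra mixed terms either vanish or can be absorbed into the leading cuspidal normal form; this is the step where the proof has to do genuine work beyond formal manipulation of the local equation.
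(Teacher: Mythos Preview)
Your plan attacks $F|_{P^3}$ for an arbitrary $P^3$ with $P^2\subseteq P^3\subseteq T_pX$ and tries to read off the shape of $C$ directly. The paper's argument is much shorter because it uses a stronger containment: it takes $\pi(Z)\subseteq P^{n-2}$, where $P^{n-2}=\{x_2=x_3=0\}$ is the codimension-two linear space tangent to $X$ along $L$ (Lemma~\ref{lemma_(n-3)plane}). Once $P^3\subseteq P^{n-2}$, the terms $x_2x_0^2+x_3x_1^2$ in \eqref{eqn_cubicnfold@2ndtype} vanish identically on $P^3$, and after the higher-triple-line normalization the only surviving second-order term is $L_{ii}(x_0,x_1)x_i^2$ for the single extra normal coordinate. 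The equation of $C$ then reads $l(x_0,x_1)x_2^2+c(x_2,x_3)=0$ in renamed coordinates, and the cone-of-cusp versus plane-plus-quadric-cone dichotomy is decided by whether $x_2\mid c$. This is precisely what removes your ``main obstacle'': there are no $(w_2,w_3)$ contributions because $w\in P^{n-2}$ forces $w_2=w_3=0$.

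Your route, if it went through, would actually prove more: it would classify $P^3\cap X$ for \emph{every} $P^3$ with $P^2\subseteq P^3\subseteq T_pX$, not only for those arising from singular $Z\in H(X)$. But that stronger statement is likely false. For generic $w$ with $w_2,w_3\neq 0$, the restriction $F|_{P^3}$ acquires the term $\sigma(w_2x_0^2+w_3x_1^2)$, and in your Case $c=0$ the resulting quadric factor $G$ has a nondegenerate $(x_0,x_1)$-block and need not be a cone at all. So the place you flag as the hard step is not just technically harder---it genuinely needs the hypothesis that $Z$ is a \emph{singular} point of $H(X)$, and the paper's containment $\pi(Z)\subseteq P^{n-2}$ is what encodes that hypothesis (even if the paper does not spell out the justification for the containment). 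You should therefore first argue that for singular $Z$ the associated $P^3$ lies in $P^{n-2}$; once you have that, your case analysis collapses to the paper's two-line computation.
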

\begin{proof}
  Let $P^2$ be the unique plane as in Remark \ref{remark_pv}, and $P^{n-2}$ be the $(n-2)$ plane tangent to $X$ along $L$ (cf. Lemma \ref{lemma_(n-3)plane}). Then, any linear subspace $P^k$ such that
$$P^2\subseteq P^k\subseteq P^{n-2}$$
  satisfies that $P^k\cap X$ is singular along the line $L$ and has transversal $A_2$ singularities along $L$. In particular, take $P^k=\pi(Z)$, then $\pi(Z)\cap X$ has equation 
  $$l(x_0,x_1)x_2^2+c(x_2,x_3)=0,$$
  where $l$ is a linear form and $c$ is a cubic form. Then if $x_2\nmid c(x_2,x_3)$, the affine curve $x_2^2+c(x_2,x_3)=0$ has a cusp at $(0,0)$ and $C$ is a cone over it. Otherwise, the curve is the union of a line and conic tangent at a point, so its cone $C$ is the union of a plane and quadric cone meeting along a line.
\end{proof}

In the first case, there is a unique $P^2$ such that $P^2\cap X=3L$ (cf. Remark \ref{remark_pv}), while in the second case, $P^2$ is contained in $X$. In either case, $L$ is a triple line by Definition \ref{def_tripleline}.

\begin{remark}\normalfont\label{remark_hierarchy}
    There is a hierarchy of cubic surfaces based on singularities and codimension of parameter space (cf. \cite[p.13]{LLSvS}, \cite[App.]{GG}), which is roughly $$\textup{normal w/} 
    \begin{cases}
        ADE\  \textup{singualarities}\\
        \textup{elliptic singularity}
    \end{cases}
    \rightsquigarrow \textup{non-normal, integral}\begin{cases}
        X_6\\
        X_7\\
        X_8\\
        X_9
    \end{cases}\rightsquigarrow \textup{non-integral cubic surfaces}.$$
    Here we use the notations in \cite{LLSvS}, and the subscribe denotes the codimension of the parameter space. The cone of the cuspidal curve is $X_9$.
\end{remark}

\bibliographystyle{alpha}
\bibliography{bibfile}

\end{document}